\newtheorem{thm}{Theorem}[section]
\newtheorem*{thm*}{Theorem}
\newtheorem*{conj*}{Conjecture}
\newtheorem{cor}[thm]{Corollary}
\newtheorem{lem}[thm]{Lemma}
\newtheorem{prop}[thm]{Proposition}
\newtheorem{eg}[thm]{Example}
\newtheorem{construction}[thm]{Construction}
\theoremstyle{remark}
\newtheorem{rem}[thm]{Remark}
\theoremstyle{definition}
\newtheorem{defn}[thm]{Definition}
\newcounter{claim}[thm]
\newcommand{\res}{\mathrm{res}}
\newcommand{\wWr}{\,\mathrm{wr}\,}
\newcommand{\Wr}{\,\mathrm{Wr}\,}
\newcommand{\sym}[1]{\mathrm{Sym}(#1)}
\newcommand{\alt}[1]{\mathrm{Alt}(#1)}
\newcommand{\cent}[2]{\text{C}_{#1}(#2)}				
\newcommand{\ba}[1]{\overline{#1}}
\newcommand{\spl}[1]{\mathrm{sp}(#1)}
\newcommand{\diag}{\mathrm{diag}}
\title{A theory of semiprimitive groups \footnote{ T\lowercase{his research forms part of the  \uppercase{A}ustralian \uppercase{R}esearch \uppercase{C}ouncil \uppercase{D}iscovery \uppercase{P}roject   \uppercase{DP}120100446  of the first author and part of the  \uppercase{ARC} \uppercase{D}iscovery \uppercase{E}arly \uppercase{C}areer \uppercase{R}esearch \uppercase{A}ward \uppercase{DE}160100081 of the second.}}}
\author{Michael Giudici}
\author{Luke Morgan}
\address{
School of Mathematics and Statistics (M019)\\
University of Western Australia\\
Crawley, 6009\\
Australia} 
\email{michael.giudici@uwa.edu.au}	\email{luke.morgan@uwa.edu.au}
\subjclass[2010]{Primary 20B05; Secondary 20B07}
\begin{document}

\begin{abstract}
A transitive  permutation group is semiprimitive if each of its normal subgroups is transitive or semiregular. Interest in this class of groups is motivated by two sources: problems arising in universal algebra related to collapsing monoids and the graph-restrictive problem for permutation groups. Here we develop a theory of semiprimitive groups which encompasses their structure, their quotient actions and a method by which all finite semiprimitive groups are constructed. We also extend some results from the theory of primitive groups to semiprimitive groups, and conclude with open problems of a similar nature.
\end{abstract}

\maketitle

\section{Introduction}

Without a doubt, the crowning achievement of 20th Century group theory is the Classification of the Finite Simple Groups (CFSG). This theorem is celebrated not only because of the immense scope of the mathematics that it encompasses, but also because of the light this theorem shines upon many problems in finite group theory. One particular case where the theory has had a successful impact is in applications to problems concerning finite primitive permutation groups. The crux of such applications of the CFSG is the use of the O'Nan-Scott Theorem. In this paper we are concerned with finding an analogous result, an ``O'Nan-Scott type'' theorem, for a wider class of permutation groups, namely, the semiprimitive groups. Our  principal goal upon setting out on this investigation was  to 
find a meaningful subdivision of the class of semiprimitive groups, as in the O'Nan-Scott Theorem, which would allow the CFSG to be
brought to bear upon problems concerning finite semiprimitive groups. 
In this paper we propose a structure theory for semiprimitive groups, which 
in the finite case   is  sufficient  for applications of the CFSG.
In fact, because of the ``wild'' examples  we give in this paper,  we believe our result is  the best possible. Before going into the details of this, we discuss some background. 

A transitive permutation group $G$ on a set $\Omega$ is called \emph{imprimitive} if there exists a $G$-invariant partition of $\Omega$ into more than one part and with each part having size at least two, and \emph{primitive} otherwise.  An equivalent condition to primitivity is that point-stabilisers $G_\omega$ ($\omega \in \Omega$) are maximal in $G$ (i.e.~ that there is no subgroup $H$ of $G$ with $G_\omega < H < G$). The set of orbits of a normal subgroup of a transitive permutation group $G$ forms a system of imprimitivity for $G$, and so all non-trivial normal subgroups of a primitive group are transitive. This leads to a
 natural generalisation where we call a  permutation group \emph{quasiprimitive} if each of its non-trivial normal subgroups is transitive.  Many questions about permutation groups can be reduced to questions about primitive or quasiprimitive groups and they have been the focus of much attention, for example \cite{babai,timseress,cameron,CNT,GMP,HBPS,liebeck,LPSON,Maroti,praegersaxl,praeger2-trans,PLNON,PraegerON,praegerc,praegershalev,pyber,pyberbasesize}.

\emph{Innately transitive} permutation groups were introduced by Bamberg and Praeger \cite{bambergpraeger} and these are the \emph{finite} permutation groups $G$ with a transitive minimal normal subgroup $N$.  Such groups naturally occur as overgroups of quasiprimitive groups. A permutation group $G$ is called \emph{semiregular} if  each point-stabiliser 
 $G_\omega$
is trivial.   It is well known that the centraliser of a transitive group is semiregular \cite[Theorem 4.2A]{DM} and so a normal subgroup of an innately transitive group either contains the transitive minimal normal subgroup $N$, and hence is itself transitive, or intersects $N$ trivially and hence is semiregular.

A permutation group is called \emph{semiprimitive} if every normal subgroup is transitive or semiregular. This notion was introduced by Bereczky and Mar\'oti \cite{bermar}  and was motivated by  an application to collapsing transformation monoids. Their original definition required the group to be non-regular, but here we follow Poto\v{c}nik, Spiga and Verret \cite{psv} and include the regular case. The class  of semiprimitive groups is  much wider than the class of  innately transitive groups and includes all  automorphism groups of graphs that are vertex-transitive and locally quasiprimitive (see Lemma \ref{lem:graph problem}) and all finite Frobenius groups \cite[Lemma 2.1]{bermar}. Note that every normal subgroup of a semiregular group is also semiregular, therefore semiregular groups (whose theory is rather uninteresting) could be considered as the intransitive analogue of semiprimitive groups.

Poto\v{c}nik, Spiga and Verret were interested in semiprimitive groups due to their work on the Weiss Conjecture and its generalisations.  A finite transitive permutation group $L$ is called \emph{graph-restrictive} if there is an absolute constant $c(L)$ such that for any locally $L$ graph-group pair $(\Gamma,G)$,  the order of a vertex stabiliser in $G$ is at most $c(L)$ (see Section \ref{sec:graphs} for more details). The Weiss Conjecture \cite{weissc} asserts that any primitive group is graph-restrictive and has been proved for many classes of primitive groups, for example all 2-transitive groups are graph-restrictive \cite{trofweiss1}. Praeger \cite{praegerc} has conjectured that the class of graph-restrictive groups includes all quasiprimitive groups.
  Poto\v{c}nik, Spiga and Verret \cite{psv} showed that any graph-restrictive group must be semiprimitive and their PSV Conjecture asserts that the converse is also true.

The structure of finite primitive permutation groups is given by the O'Nan-Scott Theorem. Following \cite{PLNON}, this theorem partitions the class of finite primitive groups into eight types and has had a multitude of applications, see for example \cite{bambergetal,timseress,cameron,DM,Maroti,praegerdt,csabahendrik}.   Similar ``O'Nan-Scott  type'' theorems have been developed for the classes of finite quasiprimitive \cite{PraegerON} and innately transitive \cite{bambergpraeger} groups. In the infinite setting, similar structure theorems exist for  infinite primitive permutation groups with a minimal closed normal subgroup that in turn has a minimal closed normal subgroup \cite{macpraeger} and for infinite primitive permutation groups with finite point-stabilisers \cite{simon}. The key feature of these theorems is that each class is divided according to the structure and action of a transitive minimal normal subgroup. Such a subgroup is called a \emph{plinth} by Bamberg and Praeger \cite{bambergpraeger}.  In most cases this enables detailed information about the action of the group and structure of a point-stabiliser. An innately transitive group has at most two plinths, and if it has two plinths then they are isomorphic and regular \cite[Lemma 5.1]{bambergpraeger}.

The aim of this paper is to investigate   semiprimitive groups along the lines of an ``O'Nan-Scott  type'' theorem. We introduce the notion of a \emph{plinth} of a transitive permutation group to be a minimally transitive normal subgroup. Every finite transitive permutation group has a plinth. However, there are infinite primitive groups with no minimal normal subgroups and hence no plinth. (For example, the free group of rank two has a faithful 2-transitive representation \cite{McDonough} but no minimal normal subgroup.) Note that our definition of a plinth is consistent with the definition of a plinth of an innately transitive group. Moreover, any regular normal subgroup is a plinth. 

In \cite{bermar} it was shown that every soluble finite semiprimitive group has a unique regular normal subgroup that contains every semiregular normal subgroup and is contained in every transitive normal subgroup. Such a subgroup was called a \emph{kernel}, for us, it is a plinth. In fact, we arrived at the notion of a plinth in our efforts to extend the work of Bereczky and Mar\'{o}ti.

Whereas the plinth of an innately transitive group is a minimal normal  subgroup and hence the direct product of isomorphic simple groups, the plinth of a semiprimitive group has far fewer restrictions.
In fact, any abstract group is a semiprimitive permutation group acting regularly on itself, and in this action the whole group is a plinth,  so there is no restriction on the structure of a plinth of a semiprimitive group.
Even in the non-regular case, a semiprimitive group can have an arbitrary number of plinths (Example \ref{eg:many plinths}), two plinths need not be isomorphic (Example \ref{eg:nonisoplinth}) and any finite centre-free perfect group can be a non-regular plinth (Example \ref{eg:cfperfect}). However, we are still able to deduce some useful information about the structure of plinths in both the finite and infinite cases.

\begin{thm}\label{thm:plinth}			
Let $G$ be a  semiprimitive group with plinth $K$.
\begin{enumerate}
\item If $K$ is non-regular, then $K$ is perfect and is the unique plinth of $G$.
\item If $L$ is another plinth, then $K/L\cap K \cong L/L\cap K$ is characteristically simple  and every plinth of $G$ is contained in $KL$.
\end{enumerate}
\end{thm}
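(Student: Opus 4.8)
The plan is to route everything through a single structural observation, which I isolate first. Suppose $A,B\trianglelefteq G$ are both transitive and $A\cap B$ is semiregular, and set $\Sigma=\Omega/(A\cap B)$, the set of $(A\cap B)$-orbits, on which $G$ acts because $A\cap B\trianglelefteq G$. Writing $X^\Sigma$ for the permutation group induced on $\Sigma$ by $X\le G$, I first note $[A,B]\le A\cap B$, so $A^\Sigma$ and $B^\Sigma$ commute; since each is transitive on $\Sigma$ (being the image of a transitive group) and the centraliser of a transitive group is semiregular, both $A^\Sigma$ and $B^\Sigma$ are in fact \emph{regular} on $\Sigma$ and satisfy $B^\Sigma=\cent{\sym{\Sigma}}{A^\Sigma}$, whence $A^\Sigma\cong B^\Sigma$. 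The one point I will be careful about is that $A^\Sigma=A/A_{(\Sigma)}$, where the kernel $A_{(\Sigma)}$ may exceed $A\cap B$; when $A$ is a plinth, however, $A_{(\Sigma)}$ is a proper normal subgroup of $G$ lying inside $A$, hence semiregular, and it collapses to exactly $A\cap B$ as soon as $A$ is regular.

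For part (1), apply this with $A=K$. The commutator subgroup $K'$ is characteristic in $K\trianglelefteq G$, hence normal in $G$ and contained in the plinth $K$, so it is transitive or semiregular. If $K'$ were semiregular then, taking $\Sigma=\Omega/K'$, the kernel $K_{(\Sigma)}$ is semiregular and $K/K_{(\Sigma)}$ is an abelian (being a quotient of $K/K'$) transitive group on $\Sigma$, hence regular; regularity forces $K_\omega\le K_{(\Sigma)}$, and semiregularity of $K_{(\Sigma)}$ then gives $K_\omega\le (K_{(\Sigma)})_\omega=1$, contradicting non-regularity of $K$. Therefore $K'=K$ and $K$ is perfect. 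For uniqueness, suppose $L\ne K$ is another plinth; then $K\cap L$ is a proper normal subgroup of the plinth $K$, so it is semiregular, and the structural observation (with $A=K$, $B=L$) makes $K^\Sigma$ regular on $\Sigma=\Omega/(K\cap L)$. Exactly as before, $K^\Sigma$ regular gives $K_\omega\le K_{(\Sigma)}$ with $K_{(\Sigma)}$ semiregular, so $K_\omega=1$, contradicting that $K$ is non-regular. Hence a non-regular plinth is unique.

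For part (2) I may now assume $L\ne K$, so by (1) both $K$ and $L$ are regular; then $K_{(\Sigma)}=K\cap L=L_{(\Sigma)}$ and the observation yields $K/(K\cap L)=K^\Sigma\cong L^\Sigma=L/(K\cap L)$, acting on $\Sigma$ as the left and right translations of the group $W:=K/(K\cap L)$. To see $W$ is characteristically simple I argue by contradiction: a proper nontrivial characteristic subgroup $C$ of $W$ is normal in $\bar G:=G/(K\cap L)$ and, transported by the isomorphism above, gives a mirror characteristic subgroup $C^\ast$ of $L/(K\cap L)$; pulling both back to $G$ produces normal subgroups $M,M^\ast$ with $M/(K\cap L)=C$ and $M^\ast/(K\cap L)=C^\ast$. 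Then $N:=MM^\ast\trianglelefteq G$ induces on $\Sigma$ the two-sided translations by the subgroup $C_0\le W$ corresponding to $C$: its orbit is the coset space of $C_0$ (so $N$ is intransitive on $\Omega$), while its point-stabiliser on $\Sigma$ is the diagonal $\{(c,c):c\in C_0\}\ne 1$, which lifts to a nontrivial point-stabiliser of $N$ on $\Omega$ (so $N$ is not semiregular). Thus $N$ is neither transitive nor semiregular, contradicting semiprimitivity; hence $W$ has no such $C$ and is characteristically simple.

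Finally, for the containment of all plinths in $KL$, let $P$ be any plinth and consider $P\cap KL\trianglelefteq G$; if it is transitive it equals the plinth $P$ and we are done, so the only case to treat is $P\cap KL$ semiregular, i.e. $P\not\le KL$. Here I would again try to manufacture a normal subgroup that is neither transitive nor semiregular: pass to $\Sigma'=\Omega/(P\cap KL)$, where the engine makes $P^{\Sigma'}$ and $(KL)^{\Sigma'}$ regular mutual centralisers, observe that $K^{\Sigma'}$ is a transitive subgroup of the regular group $(KL)^{\Sigma'}$ and so equals it, and combine the resulting two-sided translation structure with the characteristic simplicity just established to build a diagonal normal subgroup obstructing semiprimitivity, as in the previous paragraph. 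I expect this last step, producing the obstruction in the genuinely three-plinth configuration where the common semiregular part $K\cap L$ need not be trivial (as the quaternion example $\mathrm{Q}_8$ already shows), to be the main difficulty: there the naive centraliser identities turn out to be vacuous, and a careful bookkeeping of the several kernels $K_{(\Sigma)}$, $(KL)_{(\Sigma')}$, and so on will be required.
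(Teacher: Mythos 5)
Your part (1) and the isomorphism $K/K\cap L\cong L/K\cap L$ in part (2) are correct and take a genuinely different route from the paper: the paper's engine is the lemma that for a plinth $K$ the stabiliser $G_\omega$ acts faithfully on $K/Y$ for every $G$-normal $Y<K$, whence $K=[K,N\cap G_\omega]\leqslant N$ for every non-regular normal $N$ (the ``superplinth'' lemma), and everything cascades from that commutator identity; your substitute is the quotient-action/regularity calculus, which does the same work for those steps. There are, however, two gaps. The smaller one is in the characteristic-simplicity argument: you assert that the diagonal $\{(c,c):c\in C_0\}$ is a nontrivial point-stabiliser on $\Sigma$, but as permutations of $W$ these elements are the conjugations $w\mapsto c^{-1}wc$, which are trivial exactly when $c\in\zent{W}$; so as written your obstruction evaporates whenever $C_0\leqslant\zent{W}$ (on its face it cannot exclude $W\cong \mathrm{Q}_8$, say). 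The step is salvageable because $\zent{W}=1$ automatically: for central $c$ one has $\lambda_c=\rho_{c^{-1}}$, so $\lambda(\zent{W})\leqslant\overline{K}\cap\overline{L}=\overline{K\cap L}=1$. This observation is needed and is missing.

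The more serious gap is that the claim ``every plinth of $G$ is contained in $KL$'' is not proved: you offer only a plan that you yourself flag as the main difficulty, and the proposed diagonal construction is not the right move there. In fact your own engine closes this in a few lines, with no new obstruction needed. Suppose $P$ is a plinth with $P\nleqslant KL$. Then $P\cap KL$ is a proper $G$-normal subgroup of the plinth $P$, hence intransitive and therefore semiregular; on $\Sigma'=\Omega/(P\cap KL)$ the transitive normal subgroups $P^{\Sigma'}$ and $(KL)^{\Sigma'}$ commute, so both are regular. The kernel $(KL)_{(\Sigma')}$ is normal in $G$ and intransitive (were it transitive, $P\cap KL$ would be transitive), hence semiregular; regularity of $(KL)^{\Sigma'}$ gives $(KL)_\omega\leqslant (KL)_{(\Sigma')}$ and so $(KL)_\omega=1$, i.e.\ $KL$ is regular. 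But $KL$ properly contains the transitive group $K$ (since $L\nleqslant K$ by minimality of $K$), and a regular group equals any transitive subgroup, a contradiction. The paper reaches the same conclusion even more directly from the superplinth lemma applied to the non-regular normal subgroup $N=KL$. As submitted, though, the proposal does not establish this part of the theorem.
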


Theorem \ref{thm:plinth} is proved in Section \ref{sec:structure}, where more information is given about the structure of plinths. In particular, the structure of a  semiprimitive group with two plinths is tightly constrained by Theorem \ref{thm:mult plinths}  and in the finite case any two plinths must have the same set of composition factors. 

Another reason that the structure of semiprimitive groups is less restricted  than that of primitive groups is that there are more ways to build semiprimitive groups. The roughest interpretation of the O'Nan-Scott Theorem says that a primitive permutation group is either a `basic' group, or obtained from a basic group via the product action of   a wreath product. For semiprimitive groups, we have a new kind of product, which we call the \emph{glued product}, which takes two semiprimitive groups   with isomorphic point-stabilisers  and produces a new semiprimitive group by glueing together their point-stabilisers. We make this precise and prove the details in Section~\ref{sec:sptriples}. 

Let $N$ be an intransitive normal subgroup of a semiprimitive group $G$. Then  (as first shown by Bereczky and Mar\'{o}ti in the finite case) $N$ is the kernel of the action of $G$ on the set of $N$-orbits and this action is semiprimitive (Lemma \ref{lem: quotient of sp is sp}). In the finite case, taking $N$ to be maximal subject to containment in a plinth $K$, the group $G/N$ is innately transitive. Thus semiprimitive groups appear to be built out of innately transitive groups. To make this more concrete, we borrow the following notion from representation theory.
If $G$ has at least two plinths, we define $\mathrm{rad}(G)$ to be the intersection of all plinths of $G$. If $G$ has a unique plinth $K$, we define $\mathrm{rad}(G)$ to be the intersection of all proper subgroups of $K$ that are maximal  subject to being normal in $G$.

\begin{thm}
\label{thm:rad g}
Let $G$ be a finite semiprimitive group. Then $G/\mathrm{rad}(G)$ is the glued product of a tightly constrained family of innately transitive groups.
\end{thm}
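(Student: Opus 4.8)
The plan is to treat separately the two cases in the definition of $\mathrm{rad}(G)$, with the unique-plinth case being the substantial one. Suppose first that $G$ has a unique plinth $K$, acting on $\Omega$ with point stabiliser $H=G_\omega$, and let $M_1,\dots,M_t$ be the proper subgroups of $K$ that are maximal subject to being normal in $G$, so that $\mathrm{rad}(G)=\bigcap_{i=1}^t M_i$ by definition. Each $M_i$ is a proper subgroup of the minimally transitive group $K$ and hence is intransitive, so by Lemma~\ref{lem: quotient of sp is sp} the group $G/M_i$ acts faithfully and semiprimitively on the set of $M_i$-orbits. Since $M_i$ is maximal subject to being a normal subgroup of $G$ contained in the plinth $K$, the quoted fact (proved in Section~\ref{sec:structure}) gives that $G/M_i$ is innately transitive, with plinth $K/M_i$; note that $K/M_i$ is then a minimal normal subgroup of $G/M_i$ and so is characteristically simple. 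This produces the family of innately transitive groups that I claim $G/\mathrm{rad}(G)$ is glued from.

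Next I would realise $G/\mathrm{rad}(G)$ as a subdirect product of the $G/M_i$ and identify the glueing. The diagonal map $G\to\prod_{i=1}^t G/M_i$ has kernel $\bigcap_i M_i=\mathrm{rad}(G)$ and each projection is onto, so it induces a subdirect embedding $G/\mathrm{rad}(G)\hookrightarrow\prod_i G/M_i$. Because every $M_i$ is contained in $K$ and $K$ is transitive we have $G=HK$, so $G/K\cong H/(H\cap K)$ is a common quotient of all the $G/M_i$, realised inside each point stabiliser $HM_i/M_i$. Thus the image of $G/\mathrm{rad}(G)$ lies in the fibre product $\{(g_iM_i)_i:\ g_iK\text{ is independent of }i\}$ of the $G/M_i$ over $G/K$; unwinding the definitions in Section~\ref{sec:sptriples}, this fibre product, with its action on the product of the orbit sets, is exactly the glued product of the $G/M_i$ along the common point-stabiliser quotient $G/K$. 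The point stabiliser of $G/\mathrm{rad}(G)$, namely $H\,\mathrm{rad}(G)/\mathrm{rad}(G)$, correspondingly maps to the glued-together stabilisers $HM_i/M_i$, which is the compatibility one must check.

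For the case where $G$ has at least two plinths I would first argue that all plinths are regular: by Theorem~\ref{thm:plinth}(1) a non-regular plinth is unique, so two or more plinths forces them all to be regular. I would then feed the tightly constrained structure supplied by Theorem~\ref{thm:mult plinths}, together with Theorem~\ref{thm:plinth}(2) (which shows that for plinths $K,L$ the quotient $K/(K\cap L)$ is characteristically simple and every plinth lies in $KL$), into the same fibre-product analysis, taking $\mathrm{rad}(G)=\bigcap$ of the plinths as the glueing kernel; here the relevant innately transitive quotients arise from the actions on the orbit sets of the maximal $G$-invariant subgroups of the individual plinths.

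The step I expect to be the main obstacle is proving that the subdirect embedding is onto the whole glued product, i.e.\ that the image of $G$ fills out the fibre product rather than being a proper subdirect subgroup. Over two factors this is clean: distinct maximal $M_i,M_j$ give $M_iM_j=K$, which yields surjectivity onto $(G/M_i)\times_{G/K}(G/M_j)$. For a family of size greater than two, pairwise comaximality is not by itself enough to conclude surjectivity of $K\to\prod_i K/M_i$, so I would need a Goursat-type independence argument, verifying $M_i\cdot\bigcap_{j\ne i}M_j=K$ for each $i$; this is where the identification of the $K/M_i$ as characteristically simple $G$-factors, and the precise ``tightly constrained'' hypotheses built into the definition of the glued product, must be used. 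Getting this independence exactly right, and matching it to the defining conditions of the glued product in Section~\ref{sec:sptriples}, is the technical heart of the proof.
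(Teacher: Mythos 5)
Your overall skeleton matches the paper's: the quotients $G/M_i$ by the maximal $G$-normal proper subgroups of the plinth are the innately transitive pieces, and $G/\mathrm{rad}(G)$ is assembled from them by gluing along the common point-stabiliser. The paper itself deduces the theorem in two lines from Theorem~\ref{thm:classification}, whose proof combines Proposition~\ref{prop:duality} with repeated applications of Theorem~\ref{thm: decomp perm isom}; you are essentially re-deriving that content. However, the step you single out as the technical heart contains a genuine error: the independence condition $M_i\cdot\bigcap_{j\neq i}M_j=K$ that you propose to verify is \emph{false} in general, so your subdirect embedding need not be onto the fibre product of \emph{all} the $G/M_i$. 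A concrete counterexample: let $K=(\mathrm C_p)^2$ and $H=\mathrm C_{p-1}$ acting on $K$ by scalars, and let $G=K\rtimes H$ act on $K$. Then $G$ is semiprimitive with unique regular plinth $K$, the maximal $G$-normal proper subgroups of $K$ are the $p+1$ lines, their intersection is $\mathrm{rad}(G)=1$, yet $|K|=p^2$ while $\prod_i|K/M_i|=p^{p+1}$; each $G/M_i$ is of type HA. This is exactly the exceptional situation recorded in Theorem~\ref{thm:classification}(a)(ii), where $\mathcal S'\subsetneq\mathcal S$.

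The paper's resolution is Proposition~\ref{prop:duality}: one does not glue all the $G/M_i$, but first decomposes $K/\mathrm{rad}(G)=L_1\times\dots\times L_s$ into minimal normal subgroups of $G/\mathrm{rad}(G)$, with $s\leqslant r$ and each $L_j\cong K/M$ for some $M\in\mathcal S$; equality $s=r$ is guaranteed only when $K/\mathrm{rad}(G)$ is perfect (which holds in the multi-plinth case by Theorem~\ref{thm:mult plinths}, and in case (a) unless some quotient is of type HA). The glued product is then taken over the corresponding subfamily $\mathcal S'$ via Theorem~\ref{thm: decomp perm isom}. Since the theorem as stated only asserts that $G/\mathrm{rad}(G)$ is a glued product of \emph{some} tightly constrained family, your argument can be repaired by substituting this duality statement for the false independence claim, but as written the plan commits to gluing the full family and would fail. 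Your treatment of the multi-plinth case is also vaguer than the paper's, which takes the pairwise intersections of plinths as the $M_i$ and uses perfectness of $K/M_i$ to get the full direct decomposition there.
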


The above theorem is   a consequence of a more technical result on the structure of semiprimitive groups, namely Theorem~\ref{thm:classification}. The family of groups mentioned in the above theorem is given in explicit detail in Theorem~\ref{thm:classification}.

In Section \ref{sec:sptriples} we introduce the notion of a semiprimitive triple. This is at once a generalisation and a simplification of the   innate triples of Bamberg and Praeger \cite{bambergpraeger}. A semiprimitive triple consists of three groups $K$, $H$ and $L$ with $H$ a group of automorphisms of $K$, and $L$ a normal subgroup of a group $K_0$ of $K$. These three groups satisfy the set of conditions  given in Definition \ref{defn:trips} and can be fed into Construction \ref{con:sptriples} to create a semiprimitive group. All  semiprimitive groups with a plinth can be constructed in this way and we obtain the following theorem.

\begin{thm}
Every  semiprimitive permutation group with a plinth is permutationally isomorphic to a semiprimitive group given by Construction \ref{con:sptriples} and every permutation group given by this construction is semiprimitive.
\end{thm}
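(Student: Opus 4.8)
The plan is to prove a bijective correspondence (up to permutational isomorphism) between semiprimitive groups with a plinth and the data of a semiprimitive triple, by establishing the two implications separately. This is a standard ``recognition theorem'' pattern, so I would structure the argument around two constructions that are mutually inverse.

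For the direction asserting \emph{every} permutation group arising from Construction~\ref{con:sptriples} is semiprimitive, I would first unpack the construction to identify the ambient group $G$ (built from $K$, $H$ and $L$), its action, and the point stabiliser. The key is to take an arbitrary normal subgroup $M \trianglelefteq G$ and show it is either transitive or semiregular. I expect the defining conditions of a semiprimitive triple in Definition~\ref{defn:trips} to have been engineered precisely so that one can analyse $M$ via its intersection with the copy of $K$ inside $G$: the conditions on $L \trianglelefteq K_0$ should control the point stabiliser, while the conditions on $H \leqslant \operatorname{Aut}(K)$ should control how normal subgroups project. I would split into cases according to whether $M \cap K$ (or the image of $M$ under the relevant projection) is trivial or not, using the plinth $K$ as the pivot: if the relevant intersection is trivial, semiregularity should follow from a centraliser/semiregularity argument in the spirit of \cite[Theorem 4.2A]{DM}; if it is non-trivial, transitivity of $K$ on $\Omega$ (it is a plinth, hence transitive) should force $M$ to be transitive.

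For the converse direction, I would start with an arbitrary semiprimitive group $G$ on $\Omega$ with a fixed plinth $K$, fix a point $\omega \in \Omega$, and manufacture a semiprimitive triple from the internal data. The natural candidates are $K$ itself, $H$ realised as the group of automorphisms of $K$ induced by the point stabiliser $G_\omega$ acting by conjugation, and $L$ built from $K_\omega = K \cap G_\omega$ together with the relevant normal closure inside the subgroup $K_0$ dictated by the definition. I would then verify that this triple satisfies every clause of Definition~\ref{defn:trips}; this verification will lean heavily on the structural results already established, in particular Theorem~\ref{thm:plinth} (perfectness and uniqueness in the non-regular case, and the tight relationship between multiple plinths) and the semiprimitivity of quotient actions (Lemma~\ref{lem: quotient of sp is sp}). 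Finally I would show that feeding this triple into Construction~\ref{con:sptriples} returns a permutation group permutationally isomorphic to $(G,\Omega)$, by exhibiting an explicit bijection $\Omega \to \Omega'$ that intertwines the two actions and checking it respects the group actions on generators.

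The main obstacle I anticipate is the faithfulness and well-definedness bookkeeping in the converse direction: one must check that the abstract data extracted from $(G,\Omega)$ does not lose information, so that the reconstructed group is genuinely permutationally isomorphic to $G$ rather than merely a quotient or a group with the same plinth action. Concretely, the delicate point is reconstructing the action of the full point stabiliser $G_\omega$ (not just $K_\omega$) from the automorphism group $H$ and the subgroup $L$, ensuring that the glueing data encoded in the triple captures exactly the embedding $G_\omega \hookrightarrow \operatorname{Aut}(K) \ltimes (\text{something})$. I would handle this by carefully tracking how $G_\omega$ sits relative to $K$ via the coset action of $G$ on $G/G_\omega$ and matching it against the action produced by Construction~\ref{con:sptriples}, using the uniqueness afforded by Theorem~\ref{thm:plinth}(1) in the non-regular case to pin down $K$ canonically, and treating the regular plinth case (where Theorem~\ref{thm:plinth}(2) and the examples show greater flexibility) with an appropriate separate argument.
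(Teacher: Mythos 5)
Your overall architecture --- extract a triple from a semiprimitive group with a plinth, verify the clauses of Definition~\ref{defn:trips}, and then reconstruct the group via an explicit permutational isomorphism, while separately proving that the construction always outputs a semiprimitive group --- is the same as the paper's. The converse direction of your plan is essentially sound: the paper takes $H = G_\omega\mu$ (the automorphisms of $K$ induced by conjugation) and $L = K_\omega$, and your only deviation is the suggestion that $L$ might need a normal closure inside $K_0$; it does not, since Lemma~\ref{lem:cent lemma} already shows that $K_\omega$ is normal in $K_\sigma = K_0$.

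There is, however, a genuine gap in your plan for the forward direction. You propose to take a normal subgroup $M$ of the constructed group and split on whether $M \cap K$ is trivial, claiming that a trivial intersection yields semiregularity (correct: $M$ then centralises the transitive group $K$) and that a non-trivial intersection, combined with transitivity of $K$, forces $M$ to be transitive. The second implication is false. Already in the glued product $\mathrm{D}_{30} = (\mathrm{C}_3 \times \mathrm{C}_5) \rtimes \mathrm{C}_2$ acting on $15$ points with regular plinth $K = \mathrm{C}_{15}$, the normal subgroup $M = \mathrm{C}_3$ satisfies $1 \neq M \cap K = M$ yet is intransitive (it happens to be semiregular, so the group is still semiprimitive, but your strategy would be attempting to prove a false statement about $M$). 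The correct dichotomy is $K \leqslant M$ versus $M \cap K < K$, and in the second case the entire content of the proof is to show $M \cap H = 1$: if $1 \neq M \cap H$, then condition (1) of Definition~\ref{defn:trips} (equivalently, $[K,N] = K$ for every non-trivial normal subgroup $N$ of $H$) gives $K = [M \cap H, K] \leqslant M \cap K < K$, a contradiction. This is exactly Lemma~\ref{lem:g is semi prim}, and it is the step your proposal does not identify; without it the faithfulness hypothesis on $H$-invariant quotients of $K$ is never used, and the semiregularity of normal subgroups meeting $K$ properly but non-trivially is never established.
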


We are able to apply the theory that we develop to investigate some properties of semiprimitive groups. In particular,
 we prove the following theorem, which reduces the PSV Conjecture to semiprimitive groups with a unique plinth.
\begin{thm}
\label{thm:intro-grp}
A finite semiprimitive group with at least two plinths is graph-restrictive.
\end{thm}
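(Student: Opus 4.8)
The plan is to reduce, via the plinth theory of Theorem~\ref{thm:plinth}, to a statement about graphs with a regular local structure, and then to bound vertex-stabilisers by combining the Thompson--Wielandt theorem with the rigidity forced by having two regular plinths.

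First I would make the key structural reduction. If $L$ has at least two plinths then, by Theorem~\ref{thm:plinth}(1), none of them can be non-regular (a non-regular plinth would be the unique plinth); hence every plinth of $L$ is regular, and in particular $L$ has at least two distinct regular normal subgroups $K$ and $M$. It therefore suffices to prove that a finite semiprimitive group possessing a regular normal subgroup is graph-restrictive, and I would retain the second regular normal subgroup to control the hardest case.

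Next I would set up the graph side. Given a connected locally-$L$ pair $(\Gamma,G)$ (we may restrict to a connected component, and transitivity of $L$ makes $G$ arc-transitive), write $G_v^{[1]}$ for the kernel of $G_v$ on $\Gamma(v)$; since $G_v^{\Gamma(v)}\cong L$ we have $|G_v|=|L|\cdot|G_v^{[1]}|$, so it is enough to bound $|G_v^{[1]}|$. For an arc $(u,v)$, the action of $G_v^{[1]}$ on $\Gamma(u)$ has kernel $G_{uv}^{[1]}:=G_u^{[1]}\cap G_v^{[1]}$ and image a normal subgroup of the point-stabiliser $L_1$ of $L$, so
\[
G_v^{[1]}/G_{uv}^{[1]}\;\cong\;(G_v^{[1]})^{\Gamma(u)}\;\trianglelefteq\;L_1,\qquad |L_1|\le |L|.
\]
As a model case, if $L$ is itself regular then $L_1=1$, forcing $(G_v^{[1]})^{\Gamma(u)}=1$, so $G_v^{[1]}=G_{uv}^{[1]}\le G_u^{[1]}$ for every arc; by symmetry and connectivity all the $G_w^{[1]}$ coincide, and this common subgroup fixes $\Gamma$ pointwise, hence is trivial. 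Thus a regular group is graph-restrictive with $c(L)=|L|$, and in general the displayed isomorphism reduces the problem to bounding the single group $G_{uv}^{[1]}$ in terms of $L$.

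Since $L$ is semiprimitive, the pair $(\Gamma,G)$ is locally semiprimitive, and the Thompson--Wielandt theorem (in its form for locally semiprimitive graphs) supplies a prime $p$ for which $G_{uv}^{[1]}$ is a $p$-group. The main obstacle is then to bound $|G_{uv}^{[1]}|$ absolutely, and this is exactly where I expect the two regular plinths to be essential. By Theorem~\ref{thm:plinth}(2), passing to the quotient by $N:=K\cap M$ turns $K$ and $M$ into two commuting regular normal subgroups of $L/N$ meeting trivially, so that $\overline{K}\,\overline{M}=T\times T$ acts biregularly with point-stabiliser the diagonal and $T\cong K/N$ characteristically simple. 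I would use this very rigid biregular structure, together with the fact that $(G_v^{[1]})^{\Gamma(u)}$ is a normal subgroup of $L_1$, to run the amalgam/pushing-up analysis on the $p$-group $G_{uv}^{[1]}$: the two regular normal subgroups should force the critical configuration in which $(G_v^{[1]})^{\Gamma(u)}$ is the full point-stabiliser either to collapse or to be bounded, yielding a bound on $|G_{uv}^{[1]}|$, and hence on $|G_v^{[1]}|$ and $|G_v|$, depending only on $L$. Establishing this final $p$-group bound from the biregular structure is the hard part of the argument; everything preceding it is formal.
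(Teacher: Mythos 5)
Your setup is sound and you have correctly identified the two main ingredients (reduction to regular plinths, and a Thompson--Wielandt type theorem), but the proposal has a genuine gap precisely at the point you flag as ``the hard part'': you never actually carry out the step that uses the two-plinth hypothesis, and the amalgam/pushing-up analysis you gesture at is not what is needed. The property that the two plinths actually buy you is not ``biregularity'' as a geometric rigidity to feed into a pushing-up argument; it is the much more specific fact that the point-stabiliser $L_\omega$ has $\mathrm O_p(L_\omega)=1$ for \emph{every} prime $p$. This follows because, for two plinths $K$ and $M$, Theorem~\ref{thm:mult plinths} makes $L/(K\cap M)$ primitive of type HS or HC, so its point-stabiliser has socle a direct power of a non-abelian simple group (Lemma~\ref{lem:prim of type hs hc}), and $K\cap M$ being semiregular gives $L_\omega\cong (L/(K\cap M))_\delta$. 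This is isolated as Lemma~\ref{lem:two plinths pcore 1} in the paper.

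With that lemma in hand, the paper's argument shows $G_{xy}^{[1]}=1$ outright rather than merely bounding it: assuming $G_{xy}^{[1]}\neq 1$, the refinement of Thompson--Wielandt in \cite{pablo} gives a prime $p$ with both $G_{xy}^{[1]}$ and $F^*(G_{xy})$ non-trivial $p$-groups; a lemma from \cite{lukemichael3n2} rules out $F^*(G_{xy})\leqslant G_x^{[1]}$, so $\mathrm O_p(G_{xy}^{\Gamma(x)})\neq 1$; but $G_x^{\Gamma(x)}\cong L$ and $\mathrm O_p(L_\omega)=1$, a contradiction. No case analysis on the $p$-group $G_{uv}^{[1]}$ is ever performed. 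So the missing idea in your write-up is the passage from ``two regular plinths'' to ``trivial $p$-core of the point-stabiliser for all $p$,'' which converts the Thompson--Wielandt conclusion directly into a contradiction; without it, your plan bottoms out in an unbounded open-ended local analysis. (A small additional remark: under the hypothesis of two plinths the model case you discuss, $L$ regular, cannot occur, since a regular group has itself as its unique plinth.)
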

  We note here that the proof of Theorem~\ref{thm:intro-grp} (given in Section~\ref{sec:graphs}) rests upon a Thompson-Wielandt Theorem, and not the CFSG. As mentioned above, the true power of the O'Nan-Scott Theorems for primitive, quasiprimitive and innately transitive groups lies in the ability to reduce problems on permutation groups in these classes to questions about finite simple groups, and thus to enable a use of the CFSG to solve problems.   Theorem~\ref{thm:rad g} shows that, for a semiprimitive group $G$, the CFSG could be used to answer questions about $G/\mathrm{rad}(G)$. We provide examples in Section~\ref{sec:ex} to show that there exist semiprimitive groups $G$ such that $\mathrm{rad}(G)$  contains   arbitrary finite simple groups as composition factors. Thus bringing the CFSG to bear upon problems concerning semiprimitive groups is feasible, if one can deal with the semiregular normal subgroup $\mathrm{rad}(G)$. 

 A general construction of permutation groups is via the product action of wreath products. Thus we are motivated in  
Section \ref{sec:wreath} 
to investigate wreath products of semiprimitive groups in product action. For the restricted wreath product we are able to give a complete answer to the question of when such a wreath product is semiprimitive. The unrestricted wreath products are more difficult to deal with, and we offer some partial results in this direction. 

 A variety of useful results on primitive groups concern   knowledge of bounds on orders, base sizes and minimal degrees. 
 For more general applications, these types of results  have been extended to quasiprimitive \cite{praegershalev} and innately transitive groups \cite{bamberg}.  
 For some of these results, the extension to semiprimitive groups is rather straightforward -- we give  details  in Section~\ref{sec:properties}. On the other hand, some of these questions run into the wildness of semiprimitive groups, and it is not clear if the expected generalisation of a result from the classes of primitive, quasiprimitive or innately transitive groups holds. In Section~\ref{sec:problems} we discuss some open problems and pose some general questions. These are motivated either by the aforementioned generalisation of results on primitive groups, or by problems that have been raised in our investigations. 

\section*{Acknowledgements}

It is a pleasure to thank Cai Heng Li, Peter Neumann and Cheryl Praeger for various insightful discussions on this topic.

\section{Preliminaries}\label{sec:prelims}

Our notation is mostly standard. We frequently use the \emph{bar notation}, that is, for a group $G$ with normal subgroup $N$ we write $\ba{G}=G/N$ and use the subgroup correspondence theorem to identify subgroups $\ba{H} \leqslant \ba{G}$ with their preimages in $G$.
 For groups $A$ and $B$ and an isomorphism $\mu : A \rightarrow B$, we define the diagonal subgroup of $A \times B$, with respect to $\mu$ as follows:
  $$\mathrm{diag}_\mu(A,B) = \{(a,a\mu) : a\in A\}.$$ 
Further, we refer to any subgroup of the above mentioned form as a diagonal subgroup.

We  recall some basic terminology of permutation groups. 
Let $\Omega$ be a set. A subgroup $G$ of $\mathrm{Sym}(\Omega)$ is referred to as a permutation group (on $\Omega$). The \emph{degree} of $G$ is the cardinality of $\Omega$. The orbit of $\omega \in \Omega$ under $G$ is the set $\omega^G:=\{ \omega^g : g \in G \}$. The group $G$ is \emph{transitive} if $\Omega=\omega^G$ for some $\omega \in \Omega$. For a subset $B \subseteq \Omega$, the \emph{point-wise stabiliser} of $B$ is $G_{(B)}: = \{g \in G \mid \omega^g = \omega$ for all $\omega \in B \}$, while the \emph{set-wise stabiliser} of $B$ is $G_{B} := \{g \in G \mid \omega^g \in B$ for all $\omega \in B \}$. If $B=\{\omega\}$ for some element $\omega$ of $\Omega$ then $G_{(B)}=G_B$ and we simply write $G_\omega$ for this subgroup. We say that $G$ is semiregular if $G_\omega=1$ for each $\omega \in \Omega$ and regular if it is both semiregular and transitive.

Suppose that $G$ is transitive on $\Omega$. A partition $\Delta$ of $\Omega$ is said to be $G$-invariant if for all $\delta \in \Delta$ we have $\delta^g = \{ \omega^g \mid \omega \in \delta \} \in \Delta$. The following two  $G$-invariant partitions are called \emph{trivial}: $\Delta = \{ \{ \omega\} \mid \omega \in \Omega\}$ and $\Delta = \{ \Omega \}$. A partition is therefore called \emph{non-trivial} if it is \emph{not} a trivial partition. The existence of $G$-invariant partitions corresponds to the existence of subgroups $H$ of $G$ such that $G_\omega \leqslant H \leqslant G$, with non-trivial $G$-invariant partitions occurring if subgroups can be found with these inequalities being strict.

In the presence of  a $G$-invariant partition $\Delta$, we can consider two different induced actions of the group $G$. The first is the induced action of $G$ on the set of parts of $\Delta$, as defined above. This gives rise to a homomorphism $G \rightarrow \mathrm{Sym}(\Delta)$. The second is the induced action on a part: if $\delta \in \Delta$ we see that $G_{\delta}$ acts on the set of elements of $\Omega$ in $\delta$. This gives a homomorphism $G_{\delta} \rightarrow \mathrm{Sym}(\delta)$. If $T$ is the permutation group induced on $\Delta$ by $G$ and $M$ the group induced by $G_{\delta}$ on $\delta$, then  $G$ is embedded in the wreath product $M \wr T$ acting  on $\delta \times \Delta$.

The concept of quotient actions is fundamental to understanding semiprimitive groups.

\begin{defn}
Let $G \leqslant \mathrm{Sym}(\Omega)$ and let $\Delta$ be a non-trivial $G$-invariant partition of $\Omega$. The induced action of $G$ on $\Delta$ is called a \emph{quotient action} of $G$ (on $\Delta$). The subgroup of $G$ that fixes each of the parts of $\Delta$ is called the \emph{kernel} of the action. A quotient action is called \emph{faithful} if  the kernel is trivial. If $\Delta$ is the set of orbits of a normal subgroup $N$ of $G$, we refer to the quotient action of $G$ on $\Delta$ as the \emph{quotient action of $G$ via $N$}.
\end{defn}

Note that not all quotient actions are quotient actions via normal subgroups. A primitive permutation group has no quotient actions. A quasiprimitive group may have quotient actions, each of which will be faithful  (the kernel of a quotient action is  necessarily intransitive), and so a quasiprimitive group has no non-trivial quotient actions via normal subgroups. Each innately transitive group that fails to be quasiprimitive automatically has a quotient action via a normal subgroup, for there must exist an intransitive normal subgroup. For each type of innately transitive group, there is in fact a quotient action which will be quasiprimitive -- and the type of this quasiprimitive action is well understood \cite[Table 1]{bambergpraeger}.

We record the following easy facts which we will use without reference.

\begin{lem}
Let $G \leqslant \mathrm{Sym}(\Omega)$ be a transitive permutation group and let $N$ be an intransitive normal subgroup. Then the action of $G$ on the set of $N$-orbits is equivalent to the action of $G$ on the set of cosets of $NG_\omega$, for any $\omega \in \Omega$.
\end{lem}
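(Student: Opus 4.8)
The plan is to realise the action on the set $\Delta$ of $N$-orbits as a transitive action of $G$, compute the stabiliser of one part, and then invoke the standard correspondence between a transitive action and a coset action. First I would note that $G$ does act transitively on $\Delta$: given parts $\omega^N$ and $(\omega')^N$, transitivity of $G$ supplies $g$ with $\omega^g = \omega'$, and normality of $N$ gives $(\omega^N)^g = (\omega^g)^N = (\omega')^N$, so every part is the image of $\omega^N$. Consequently, by the orbit--stabiliser correspondence, it suffices to identify the setwise stabiliser $G_{\omega^N}$ and show it equals $NG_\omega$ (a subgroup, since $N$ is normal); the equivalence with the coset action then follows formally.

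The crux is the equality $G_{\omega^N} = NG_\omega$, and this is the one place where normality of $N$ is genuinely used. For the inclusion $NG_\omega \leqslant G_{\omega^N}$, any $g \in N$ fixes the part because $\omega^{ng} \in \omega^N$ for all $n \in N$, and any $g \in G_\omega$ fixes it because $(\omega^N)^g = (\omega^g)^N = \omega^N$, using the identity $(\omega^N)^g = (\omega^g)^N$ that follows from $N \trianglelefteq G$. For the reverse inclusion, if $g$ stabilises $\omega^N$ setwise then $\omega^g \in \omega^N$, so $\omega^g = \omega^n$ for some $n \in N$; hence $g n^{-1} \in G_\omega$ and therefore $g \in G_\omega N = N G_\omega$.

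With the stabiliser in hand, the equivalence is the usual one: the map sending $(\omega^N)^g$ to the coset $(NG_\omega)g$ is a well-defined $G$-equivariant bijection from $\Delta$ onto the set of cosets of $NG_\omega$, with both well-definedness and injectivity reducing to the chain of equivalences $(\omega^N)^{g_1} = (\omega^N)^{g_2} \iff g_1 g_2^{-1} \in G_{\omega^N} \iff (NG_\omega)g_1 = (NG_\omega)g_2$. Equivariance is immediate from $\phi\bigl((\omega^N)^{gx}\bigr) = (NG_\omega)(gx)$. I do not anticipate any real obstacle here: the entire mathematical content is the stabiliser computation in the second paragraph, and everything else is routine bookkeeping with the orbit--stabiliser theorem.
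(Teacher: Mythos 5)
Your proof is correct; the paper records this lemma among the ``easy facts which we will use without reference'' and gives no proof of its own, and your argument --- computing the setwise stabiliser $G_{\omega^N}=NG_\omega$ and then invoking the orbit--stabiliser correspondence --- is precisely the standard argument the authors implicitly intend. Nothing is missing: the stabiliser computation correctly uses normality of $N$ in both directions, and the rest is routine.
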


\begin{lem}
\label{lem:centsemireg}
Let $G \leqslant \mathrm{Sym}(\Omega)$ be a transitive permutation group. Then $C_{\sym{\Omega}}(G)$ is semiregular.
\end{lem}

We now mention some information concerning the types of finite innately transitive groups that pertains to this paper. Recall that the socle of a group $G$ is the product of all the minimal normal subgroups, and is denoted $\mathrm{soc}(G)$. Let $G\leqslant \mathrm{Sym}(\Omega)$ be an innately transitive group and let $\omega \in \Omega$. Then by \cite{bambergpraeger}, $G$ is permutationally isomorphic to a group of exactly one of the following eleven types.

\begin{enumerate}
\item[\textbf{HA}] There is an integer $n$ and a prime $p$ such that $\Omega$ can be identified with $T\cong (\mathrm C_p)^n$ and $G = T \rtimes G_\omega$, where $G_\omega$ acts faithfully and irreducibly on $T$. All groups of this type are primitive and are subgroups of $\mathrm{AGL}(n,p)$.

\item[\textbf{HS}] The groups of this type are primitive, $G$ contains a normal subgroup isomorphic to $T\times T$, where $T$ is a non-abelian simple group. Moreover, $\Omega=T$ and  $G$ is embedded in $\mathrm{Hol}(T)$ with $\mathrm{Inn}(T) \leqslant G_\omega \leqslant \mathrm{Aut}(T)$.

\item[\textbf{HC}] The groups of this type are primitive. There is some integer $k\geqslant 2$ and some non-abelian simple group $T$ such that $\Omega=T^k$ and $T^k \mathrm{Inn}(T^k) \leqslant G \leqslant \mathrm{Hol}(T^k)$ with $G$ transitively permuting the $k$ factors of $T^k$. There are exactly two minimal normal subgroups, each isomorphic to $T^k$  and $\mathrm{soc}(G)_\omega \cong T^k$. 

\item[\textbf{AS}] Here $T \leqslant G \leqslant \mathrm{Aut}(T)$ for some non-abelian simple group $T$. The point-stabiliser  $G_\omega$ is some core-free subgroup of $G$. Groups of this type are quasiprimitive.

\item[\textbf{TW}] Here $\mathrm{soc}(G)$ is regular and isomorphic to $T^k$ for some $k>1$ and some non-abelian simple group $T$. Moreover $C_G(T^k)=1$ and $G$ is transitive on the set of $k$ factors of $T^k$. Groups of this type are quasiprimitive.

\item[\textbf{SD}] The groups of this type are quasiprimitive. Here $\mathrm{soc}(G)=T^k$ with $k>1$, is minimal normal, $G$ is contained in $\mathrm{Aut}(T^k)$ and $\mathrm{soc}(G)_\omega \cong T$  is a full diagonal subgroup.

\item[\textbf{CD}] The groups of this type are also quasiprimitive. There are integers $k,\ell >1$ such that $\mathrm{soc}(G)=T^{k\ell}$ and $\mathrm{soc}(G)_\omega \cong T^k$.

\item[\textbf{ASQ}] There is a non-abelian simple group $T$ such that $T$ is a transitive minimal normal subgroup of $G$,   $C_G(T) \neq 1$ and $C_G(T)$ is not transitive. Groups of this type are not quasiprimitive but have quasiprimitive quotient actions of type AS.

\item[\textbf{PA}] Here $G$ is not necessarily quasiprimitive. There is a $G$-invariant system of imprimitivity $\Sigma$ such that $\Sigma$ can be identified with $\Delta^k$ for some integer $k >1$ and $G^{\Sigma}\leqslant H\wr S_k$ where $H$ is an innately transitive group  on $\Delta$ of type AS or ASQ with non-regular plinth.

\item[\textbf{PQ}] In this case there is a non-abelian simple group $T$ and an integer $k>1$ such that $T^k$ is a regular minimal normal subgroup of $G$ and $C_G(T^k)\neq 1$, and the action induced by $G$ on the set of $C_G(T^k)$-orbits is quasiprimitive of type PA. Groups of this type are not quasiprimitive.

\item[\textbf{DQ}] Groups of this type are not quasiprimitive but have quasiprimitive quotient actions of type SD or CD. There is a transitive regular minimal normal subgroup isomorphic to $T^k$ for some $k>1$, and  $\mathrm{soc}(G)_\omega \cong T^r$ for some $r$. Moreover, $C_G(T^k)\neq 1$ and the action of $G$ on the set of $C_G(T^k)$-orbits is quasiprimitive of type SD or CD.

\end{enumerate}

Additionally we may separate the AS type into the types AS$^{\text{reg}}$ and AS$^{\text{non-reg}}$ corresponding to the property that a minimal normal subgroup is regular or not. The types ASQ$^{\text{reg}}$ and ASQ$^{\text{non-reg}}$ similarly partition the type of ASQ groups.

\begin{lem}
\label{lem:criterion for prim hs hc}
Suppose that $G \leqslant \mathrm{Sym}(\Omega)$ has two distinct minimal normal subgroups $L$ and $R$ that are transitive. Then $G$ is primitive and there is a non-abelian characteristically simple group $X$ such that $X\cong L\cong R\cong (LR)_\alpha$ for $\alpha\in\Omega$. Moreover, $L$ and $R$ are the only minimal normal subgroups of $G$, and if $G$ is finite then $G$ is of type HS or HC.
\end{lem}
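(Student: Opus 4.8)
The plan is to establish regularity of both $L$ and $R$, read off the resulting holomorph structure, and then deduce in turn primitivity, the count of minimal normal subgroups, and the type. First, since $L$ and $R$ are distinct minimal normal subgroups, $L\cap R$ is a normal subgroup of $G$ properly contained in each, so $L\cap R=1$; hence $[L,R]\le L\cap R=1$, giving $R\le C_G(L)$, $L\le C_G(R)$, and $LR=L\times R$. As $L$ is transitive, $C_{\sym{\Omega}}(L)$ is semiregular by Lemma~\ref{lem:centsemireg}, so its subgroup $R$ is semiregular; being transitive, $R$ is therefore regular. Moreover $C_{\sym{\Omega}}(L)$ is semiregular and contains the transitive group $R$, so it is itself regular and equals $R$; thus $C_G(L)=R$. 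Symmetrically $L$ is regular and $C_G(R)=L$.

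Next I would identify $X:=(LR)_\alpha$. Since $LR=L\times R$ is transitive (it contains the transitive group $L$) and $L_\alpha=R_\alpha=1$, the two coordinate projections restrict to injections on $(LR)_\alpha$; they are surjective because, by regularity of $R$, for each $\ell\in L$ there is a unique $r\in R$ with $\ell r\in(LR)_\alpha$. Hence $(LR)_\alpha$ is a full diagonal subgroup of $L\times R$ and $X\cong L\cong R$. The group $X$ is characteristically simple because $L$, being a minimal normal subgroup, is characteristically simple. It is non-abelian: were $L$ abelian we would have $L\le C_{\sym{\Omega}}(L)=R$, and a transitive subgroup of the regular group $R$ must equal $R$, contradicting $L\neq R$.

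The heart of the proof is primitivity. Writing $G=L\rtimes G_\alpha$ (valid since $L$ is a regular normal subgroup), the $G$-invariant partitions of $\Omega$ through $\alpha$ correspond, via the modular law, to the $G_\alpha$-invariant subgroups $U$ of $L$, through $H\mapsto H\cap L$ and $U\mapsto UG_\alpha$. The key point is that $(LR)_\alpha\le G_\alpha$ induces on $L$, by conjugation, the full inner automorphism group: for $d=\ell r\in(LR)_\alpha$ and $\ell_0\in L$ one has $\ell_0^{\,d}=\ell_0^{\,\ell}$ because $r$ centralises $L$, and $\ell$ runs over all of $L$ as $d$ runs over the full diagonal. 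Consequently any $G_\alpha$-invariant $U\le L$ is normalised by $L$ and by $G_\alpha$, hence by $G=LG_\alpha$; minimality of $L$ forces $U\in\{1,L\}$, so $G_\alpha$ is maximal and $G$ is primitive. I expect this step---pinning down that $(LR)_\alpha$ induces all inner automorphisms and feeding this into the maximality argument---to be the main obstacle, though it is routine once the holomorph picture is in place.

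It remains to count minimal normal subgroups and, in the finite case, to read off the type. Any minimal normal subgroup $M\neq L$ satisfies $M\cap L=1$, hence $M\le C_G(L)=R$, and minimality gives $M=R$; so $L$ and $R$ are the only minimal normal subgroups. Finally, suppose $G$ is finite. Then $X$ is a finite non-abelian characteristically simple group, so $X\cong T^k$ for a non-abelian simple group $T$ and some $k\ge1$, while $G$ is primitive and innately transitive (it has the transitive minimal normal subgroup $L$) with exactly two transitive minimal normal subgroups and $\mathrm{soc}(G)_\alpha=(LR)_\alpha\cong X$. Inspecting the eleven innately transitive types listed above, these features single out type HS (when $k=1$, so $X=T$ is simple) and type HC (when $k\ge2$), which completes the proof.
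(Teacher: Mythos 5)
Your proof is correct and follows essentially the same route as the paper's: regularity of $L$ and $R$ from semiregularity of centralisers of transitive groups, identification of $(LR)_\alpha$ as a full diagonal subgroup via the two projections, primitivity from the fact that a block through $\alpha$ yields a subgroup of $L$ normalised by $G=LG_\alpha$ (hence trivial or all of $L$ by minimality), uniqueness of the minimal normal subgroups from $C_G(L)=R$, and the type determination by inspection. The only cosmetic difference is in the primitivity step: the paper identifies $\Omega$ with the underlying group $X$ via the left and right regular representations (citing \cite[Lemma 4.2A(ii)]{DM}) and manipulates the block as a subset, whereas you work abstractly through the correspondence $H\leftrightarrow H\cap L$ together with the observation that $(LR)_\alpha$ induces $\mathrm{Inn}(L)$ --- the same idea in different packaging.
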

\begin{proof}
Note first that both $L$ and $R$ are regular by the lemma above. Then since $L\neq R$, both $L$ and $R$ are non-abelian and we have $C_{\sym{\Omega}}(L) =R$ and $C_{\sym{\Omega}}(R) = L$. In particular, $L$ and $R$ are the only minimal normal subgroups of $G$. By \cite[Lemma 4.2A(ii)]{DM}  there is a
group  $X$ such that $L$ is the left regular representation of $X$ and $R$ is the right regular representation of $X$. Since $L$ is a minimal normal subgroup of $G$, $X$ is characteristically simple. Let us write $\sigma : X \rightarrow L$ and $\rho : X \rightarrow R$ so that for $g\in X$ we write $\sigma : g \mapsto \sigma_g$ and $\rho : g \mapsto \rho_g$.  Further, we may identify $\Omega$ with $X$ such that, for $\sigma_\ell \in L$ and $\rho_r \in R$ and $g\in \Omega$, we have 
$$g\sigma_\ell =\ell^{-1} g \quad \text{and} \quad g\rho_r = gr.$$
Suppose that $B$ is a   block of imprimitivity of $\Omega$. Since $G$ is transitive, we may assume that $1\in B$. Suppose that $x\in B$. Then $x^{-1} = 1\sigma_x \in B\sigma_{x}$. Also $x \sigma_x= x^{-1}x =1 \in B\sigma_{x}$ so that $B=B \sigma_{x}$.  Similarly, $B=B\rho_{x}$. Hence if $x, y\in B$ then $B=(B\sigma_{x})\rho_{y}=B(\sigma_x \rho_{y })$ so that  $x^{-1}y=x^{-1}1y  \in B$. Hence $B$ is a subgroup of $X$. Let $g\in X$, then $1^g = 1$, so $1 \in B\sigma_{g}\rho_g$ so that $g^{-1}Bg = B$. Hence $B$ is a normal subgroup of $X$. Finally, since $L$ is a minimal normal subgroup of $G$, and $B$ is normalised by $G_1$, $B$ is normalised by $G=LG_1$, so  either $B=L=\Omega$ or $B=1$. Hence $G$ is primitive.

Let $K=LR\cong L\times R$ and define $\pi_1$ and $\pi_2$ to be the projections of $K$ onto $L$ and $R$ respectively. Since $L\cap R=1$ we have that $\ker(\pi_1)=R$ and $\ker(\pi_2)=L$. Let $\alpha\in\Omega$ and observe that the transitivity of $L$ and $R$ imply $K=LK_\alpha=RK_\alpha$. Thus $L=\pi_1(K)=\pi_1(K_\alpha)$ and $R=\pi_2(K)=\pi_2(K_\alpha)$.  Since $L$ and $R$ are regular we have that $K_\alpha\cap L=1$ and $K_\alpha\cap R=1$. Thus $K_\alpha\cong\pi_1(K_\alpha)\cong \pi_2(K_\alpha)$ and the result follows.
\end{proof}

\begin{lem}
\label{lem:prim of type hs hc}
Let $G \leqslant \mathrm{Sym}(\Omega)$ be a finite primitive group of type HS or HC and let $N$ be a minimal normal subgroup of $G$. Then there is a non-abelian simple group $S$ and an integer $\ell$ such that, for any $\omega \in \Omega$, we have $N \cong S^{\ell} \cong \mathrm{soc}(G_\omega)$.
\end{lem}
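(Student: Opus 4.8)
The plan is to pin down one explicit normal subgroup of $G_\omega$ and show that it is simultaneously isomorphic to $N$ and equal to $\mathrm{soc}(G_\omega)$. By the descriptions of types HS and HC, $G$ has two distinct transitive minimal normal subgroups, so Lemma~\ref{lem:criterion for prim hs hc} applies: $G$ has exactly two minimal normal subgroups $L$ and $R$, both regular, and there is a non-abelian characteristically simple group $X$ with $L\cong R\cong (LR)_\omega\cong X$. Since $G$ is finite and $X$ is non-abelian characteristically simple, we may write $X\cong S^{\ell}$ for a non-abelian simple group $S$ and integer $\ell$. As $\mathrm{soc}(G)=L\times R$ and $N$ is one of $L$, $R$, we immediately obtain $N\cong X\cong S^{\ell}$; the work lies entirely in identifying $\mathrm{soc}(G_\omega)$.

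Set $D:=\mathrm{soc}(G)_\omega=(L\times R)_\omega$, so that $D\cong X\cong S^{\ell}$, and since $\mathrm{soc}(G)\trianglelefteq G$ we have $D=\mathrm{soc}(G)\cap G_\omega\trianglelefteq G_\omega$. I claim $\mathrm{soc}(G_\omega)=D$. The inclusion $\mathrm{soc}(G_\omega)\leqslant D$ will follow once $C_{G_\omega}(D)=1$ is known: if $M$ is a minimal normal subgroup of $G_\omega$ with $M\cap D=1$, then $[M,D]\leqslant M\cap D=1$ forces $M\leqslant C_{G_\omega}(D)=1$, a contradiction, so $M\cap D\neq 1$ and minimality gives $M\leqslant D$.

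To prove $C_{G_\omega}(D)=1$ I would first record that $C_G(\mathrm{soc}(G))=1$: by the proof of Lemma~\ref{lem:criterion for prim hs hc} together with Lemma~\ref{lem:centsemireg} we have $C_{\sym{\Omega}}(L)=R$ and $C_{\sym{\Omega}}(R)=L$, whence $C_G(\mathrm{soc}(G))=C_G(L)\cap C_G(R)\leqslant R\cap L=1$. Now take $c\in C_{G_\omega}(D)$. Conjugation by $c$ induces an automorphism of $\mathrm{soc}(G)=L\times R$ which preserves each factor $L$, $R$ (both normal in $G$) and fixes $D$ pointwise. Writing this automorphism as a pair $(\alpha,\beta)$ with $\alpha\in\mathrm{Aut}(L)$, $\beta\in\mathrm{Aut}(R)$, and recalling $D=\{(x,x\mu):x\in L\}$ for the identifying isomorphism $\mu\colon L\to R$, the condition $(\alpha(x),\beta(x\mu))=(x,x\mu)$ for all $x$ forces $\alpha=\mathrm{id}$ and $\beta=\mathrm{id}$. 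Hence $c$ centralises $\mathrm{soc}(G)$, so $c\in C_G(\mathrm{soc}(G))=1$.

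For the reverse inclusion $D\leqslant\mathrm{soc}(G_\omega)$, observe that $D\cong S^{\ell}$ has simple direct factors $D_1,\dots,D_\ell$, which $G_\omega$ permutes by conjugation because it normalises $D$. For each $G_\omega$-orbit on $\{D_1,\dots,D_\ell\}$ the product of its members is a minimal normal subgroup of $G_\omega$ (a product of non-abelian simple groups transitively permuted by conjugation), so $D$ is a product of minimal normal subgroups and therefore $D\leqslant\mathrm{soc}(G_\omega)$. Combining the inclusions gives $\mathrm{soc}(G_\omega)=D\cong X\cong S^{\ell}\cong N$, and independence of $\omega$ follows from the conjugacy of point-stabilisers. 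I expect the main obstacle to be the vanishing $C_{G_\omega}(D)=1$, and within it the automorphism bookkeeping showing that preserving both factors while fixing the diagonal forces triviality; everything else is routine once the two minimal normal subgroups and the diagonal $D$ are in hand.
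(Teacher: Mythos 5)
Your proposal is correct, but it takes a genuinely different route from the paper. The paper's proof of Lemma~\ref{lem:prim of type hs hc} is a one-line appeal to the explicit descriptions of the HS and HC types given in Section~\ref{sec:prelims}: there $\mathrm{Inn}(T)\leqslant G_\omega\leqslant \mathrm{Aut}(T)$ (type HS) and $\mathrm{soc}(G)_\omega\cong T^k$ (type HC) are simply read off, so $\mathrm{soc}(G_\omega)$ is identified by citation rather than by argument. You instead reconstruct the statement from Lemma~\ref{lem:criterion for prim hs hc}: you take $D=\mathrm{soc}(G)\cap G_\omega$, which that lemma exhibits as a full diagonal subgroup isomorphic to $X\cong S^\ell$, and then prove $\mathrm{soc}(G_\omega)=D$ by showing $C_{G_\omega}(D)=1$ (for the inclusion $\mathrm{soc}(G_\omega)\leqslant D$) and that $D$ is a product of minimal normal subgroups of $G_\omega$ (for the reverse inclusion, using that any $M\trianglelefteq G_\omega$ with $M\leqslant D$ is normal in $D$ and hence a product of the simple factors $D_i$, which $G_\omega$ permutes). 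All of these steps are sound; in particular the step you flagged as the main obstacle is actually immediate, since the diagonal $D$ projects onto each of $L$ and $R$, so an element fixing $D$ pointwise and normalising both factors fixes $L\times R=\mathrm{soc}(G)$ pointwise and hence lies in $C_G(\mathrm{soc}(G))=L\cap R=1$. What your approach buys is self-containedness: it proves the socle identification from first principles rather than asserting it as part of the type description, at the cost of about a page of argument where the paper spends one sentence.
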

\begin{proof}
This follows from the description of the primitive groups of types HS and HC given above.
\end{proof}

 In the lemma below, and later in this paper,   we write $c_x$  ($x\in K$) for the automorphism of $K$ induced by conjugation by $x$.

\begin{lem}
\label{lem:cent lemma}
Suppose that $G \leqslant \mathrm{Sym}(\Omega)$ and $K$ is a transitive normal subgroup of $G$. Let $\omega \in \Omega$ and let $\sigma = \omega^{C_G(K)}$. Then for each $k\in K_\sigma$ there is a unique $r_k \in C_G(K)$ such that $kr_k \in G_\omega$. Further:
\begin{enumerate}[(a)]
\item the map $\phi : K_\sigma \rightarrow C_G(K)$ defined by $\phi: k \mapsto r_k$ is a surjective group homomorphism with kernel $K_\omega$;
\item with $\mu : G_\omega \rightarrow \mathrm{Aut}(K)$ the natural map,
 $$K_\sigma = \{k \in K \mid\text{ there is }h\in G_\omega\text{ such that }c_k=h\mu \}.$$
\end{enumerate}
\end{lem}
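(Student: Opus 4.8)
The plan is to exploit two structural facts about $C := C_G(K)$. First, $C$ is normal in $G$, being the centraliser of a normal subgroup; hence $\sigma = \omega^C$ is a block of imprimitivity, and for any $g \in G$ we have $\sigma^g = (\omega^C)^g = \omega^{gC} = (\omega^g)^C$, so $g$ stabilises $\sigma$ setwise precisely when $\omega^g \in \sigma$. Restricting to $K$ gives the reformulation $K_\sigma = \{k \in K : \omega^k \in \sigma\}$, which is what makes the definition of $r_k$ possible. Second, since $K$ is transitive, $C$ is semiregular by Lemma~\ref{lem:centsemireg}, so $C_\omega = 1$. Existence of $r_k$ is then immediate: for $k \in K_\sigma$ we have $\omega^k \in \sigma = \omega^C$, so $\omega^k = \omega^c$ for some $c \in C$, and $r_k := c^{-1}$ satisfies $\omega^{kr_k} = \omega$, i.e. $kr_k \in G_\omega$. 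Uniqueness follows from semiregularity: if $kr, kr' \in G_\omega$ with $r, r' \in C$, then $r^{-1}r' = (kr)^{-1}(kr') \in G_\omega \cap C = C_\omega = 1$.

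For part (a), write $h_i = k_i r_{k_i} \in G_\omega$ for $k_1, k_2 \in K_\sigma$. The product $h_1 h_2$ lies in $G_\omega$, and since $r_{k_1} \in C$ commutes with $k_2 \in K$ we may rearrange $h_1 h_2 = k_1 r_{k_1} k_2 r_{k_2} = (k_1 k_2)(r_{k_1} r_{k_2})$. As $k_1 k_2 \in K_\sigma$ and $r_{k_1} r_{k_2} \in C$, uniqueness forces $r_{k_1 k_2} = r_{k_1} r_{k_2}$, so $\phi$ is a homomorphism. The kernel is $\{k : r_k = 1\} = \{k : k = kr_k \in G_\omega\} = K_\omega$, noting $K_\omega \leqslant K_\sigma$ since $G_\omega$ fixes the block $\sigma$. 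Surjectivity is where transitivity of $K$ enters: given $r \in C$, choose $k \in K$ with $\omega^k = \omega^{r^{-1}}$; then $\omega^k \in \sigma$ so $k \in K_\sigma$, and $\omega^{kr} = \omega$ yields $r_k = r$.

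For part (b), the forward inclusion is a direct consequence of the defining property: for $k \in K_\sigma$ the element $h := kr_k \in G_\omega$ induces on $K$ the same automorphism as $k$, because $r_k$ centralises $K$; hence $c_k = h\mu$. Conversely, if $c_k = h\mu$ for some $h \in G_\omega$, then $k$ and $h$ induce the same conjugation automorphism of $K$, so $kh^{-1} \in C_G(K) = C$. Writing $k = (kh^{-1})h$ and using that $h \in G_\omega$ stabilises the block $\sigma$, we obtain $\omega^k = (\omega^{kh^{-1}})^h \in \sigma^h = \sigma$, whence $k \in K_\sigma$.

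The only genuinely delicate points are bookkeeping ones rather than conceptual obstacles: one must use that $C$ is \emph{normal} (so that $\sigma$ is a block and $K_\sigma$ admits the orbit description $\{k : \omega^k \in \sigma\}$), and that \emph{semiregularity} of $C$ is simultaneously responsible for the uniqueness of $r_k$ and for the homomorphism property. I expect no serious difficulty beyond keeping the conjugation conventions for $c_k$ and $\mu$ consistent throughout.
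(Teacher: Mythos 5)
Your proof is correct and follows essentially the same route as the paper: semiregularity of $C_G(K)$ gives existence and uniqueness of $r_k$, the commutation of $r_{k_1}$ with $k_2$ gives the homomorphism property, and normality of $C_G(K)$ (so that $\sigma$ is a block with $G_\sigma = C_G(K)G_\omega$) handles surjectivity and part (b). Your appeal to uniqueness of $r_{k_1k_2}$ is a slightly cleaner packaging of the paper's explicit computation inside $(K_\sigma C_G(K))_\omega$, but the underlying mechanism is identical.
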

\begin{proof}
Since $K$ is transitive on $\Omega$, we have that $C_G(K)$ is semiregular. Hence, for each $k\in K_\sigma$ there is a unique element $r_k \in C_G(K)$ such that $\omega k = \omega r_k^{-1} $, that is, such  that 
$$k r_k \in  (K_\sigma C_G(K))_\omega.$$
For each $k\in K_\sigma$ define $k\phi = r_k$. We claim that $\phi$ is a homomorphism.
Let $k, h\in K_\sigma$ and note that $k(k\phi) h h\phi = kh (k\phi h\phi) \in (K_\sigma C_G(K))_\omega$. Now $kh [(kh)\phi] \in (K_\sigma C_G(K))_\omega$ hence $(kh)^{-1} ((kh)\phi)^{-1} = h^{-1}k^{-1} ((kh)\phi)^{-1} \in (K_\sigma C_G(K))_\omega$. Thus
$$[kh (k\phi h\phi) ] [ h^{-1}k^{-1} ((kh)\phi)^{-1} ] = (k\phi h\phi) ((kh)\phi)^{-1} \in (K_\sigma C_G(K))_\omega \cap C_G(K) = C_G(K)_\omega.$$
Since $C_G(K)_\omega = 1$, we obtain $ k \phi h \phi  = (kh)\phi$ as required.
Since $k\phi = 1$ implies $\omega k=\omega (k\phi)=\omega$ we have that $\ker \phi = K_\omega$. Clearly $\phi$ is surjective  and (a) is established.

For part (b), for each $k\in K_\sigma$, we have that $k(k\phi) \in G_\omega$. Hence $[k (k\phi)]\mu = k\mu =h \mu$ for some $h\in G_\omega$. Conversely, if $k\in K$ and there is $t \in G_\omega$ such that $c_k = t\mu$, then $kt^{-1} \in C_G(K)$ and so $k\in C_G(K)G_\omega = G_\sigma$. Thus $k\in K \cap G_\sigma = K_\sigma$.
%
%
\end{proof}


We record the following well-known fact  and provide a proof for completeness. In the proof, we use the following notion: A \emph{normal section} of a group $G$ is a quotient $K/N$ where $K$ and $N$ are normal subgroups of $G$. Naturally $G$ acts by conjugation on $K/N$ and we call the normal section $G$-\emph{simple} if there are no proper non-trivial subgroups of $K/N$ invariant under this action of $G$, equivalently, if $K/N$ is a minimal normal subgroup of $G/N$.

\begin{prop}
\label{prop:duality}
Let $G$ be a finite group with normal subgroups $N$ and $K$ with $N \leqslant K$. Let $\mathcal S =\{M_1,\dots,M_r\} $ be a set of  normal subgroups of $G$ that are maximal with respect to $N \leqslant M_i < K$ and set $S = \bigcap _{M \in \mathcal S} M$. Then the following hold:
\begin{enumerate}[(i)]
\item $K/S= L_1 \times \dots \times L_s$, where each $L_i$ is a minimal normal subgroup of $G/S$ and $s\leqslant r$;
\item $L_i \cong K/M$ for some $M\in \mathcal S$;
\item if $K/S$ is perfect, then $s=r$.
\end{enumerate}
\end{prop}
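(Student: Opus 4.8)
The plan is to pass to the quotient $\ba{G}=G/S$ and work with the characteristically simple (indeed, minimal-normal) structure of $\ba{K}=K/S$, reducing everything to the standard theory of minimal normal subgroups of finite groups. First I would observe that each $\ba{M_i}=M_i/S$ is a maximal $G$-invariant proper subgroup of $\ba K$, so the normal section $\ba K/\ba{M_i}$ is $G$-simple; equivalently, $\ba K/\ba{M_i}$ is a minimal normal subgroup of $G/M_i$. Since $S=\bigcap_i M_i$, the natural map $\ba K\to \prod_{i=1}^r \ba K/\ba{M_i}$ is injective, embedding $\ba K$ as a subdirect product of $G$-simple sections. The key structural input I would invoke is that any normal subgroup of $G$ contained in such a subdirect product of $G$-simple factors is itself a direct product of some of those factors: concretely, a minimal normal subgroup $\ba L$ of $\ba G$ must project trivially or isomorphically onto each $\ba K/\ba{M_i}$, and is therefore $G$-isomorphic to one of the $\ba K/\ba{M_i}$. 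This yields (ii) immediately.

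For (i), I would build the decomposition $\ba K=L_1\times\cdots\times L_s$ greedily. Choose $L_1$ to be any minimal normal subgroup of $\ba G$ inside $\ba K$; having chosen $L_1,\dots,L_{j}$ whose product is direct, if $L_1\cdots L_j\neq \ba K$, pick an $M_i$ not containing $L_1\cdots L_j$ and use the minimal normal subgroup $L_{j+1}$ of $\ba G$ inside $\ba K$ that projects nontrivially (hence isomorphically) onto $\ba K/\ba{M_i}$; minimality forces $L_{j+1}\cap (L_1\cdots L_j)=1$, so the product stays direct. The process terminates because $\ba K$ is finite, giving $\ba K=L_1\times\cdots\times L_s$ with each $L_i$ minimal normal in $\ba G$. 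The bound $s\leqslant r$ follows because the composite map $\ba K\to\prod_i \ba K/\ba{M_i}$ restricted to each $L_i$ is injective into a \emph{single} factor $\ba K/\ba{M_i}$ (distinct $L_i$ landing in distinct factors, since two direct factors mapping isomorphically into the same $G$-simple section would intersect nontrivially), so $s$ cannot exceed the number $r$ of factors.

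For (iii), suppose $\ba K=K/S$ is perfect. If $s<r$, then after reindexing two of the maximal subgroups, say $\ba{M_1}$ and $\ba{M_2}$, receive the ``same'' direct factor, i.e.\ the same $L_i$ maps isomorphically onto both $\ba K/\ba{M_1}$ and $\ba K/\ba{M_2}$ while the other factors die in both. I would argue this forces $\ba{M_1}$ and $\ba{M_2}$ to have equal kernels on the relevant factor and, combined with perfectness of $\ba K$ (which rules out abelian composition factors and hence any ``diagonal'' collapse that perfectness would otherwise permit), to actually coincide as subgroups of $\ba K$, contradicting that $\mathcal S$ is a set of distinct maximal subgroups. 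The cleanest route is to note that since $\ba K$ is the direct product of the $L_i$, each maximal $G$-invariant subgroup $\ba{M_i}$ is necessarily of the form $\prod_{j\neq \tau(i)} L_j$ for a map $\tau:\{1,\dots,r\}\to\{1,\dots,s\}$; distinctness of the $M_i$ makes $\tau$ injective, while (i)--(ii) make it surjective, whence $s=r$.

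I expect the main obstacle to be the perfectness argument in (iii): without perfectness, a $G$-simple factor could be an elementary abelian $p$-group, and then two distinct maximal subgroups could share an isomorphic quotient through a diagonal, so $s<r$ is genuinely possible. The delicate point is to show that when $\ba K$ is perfect the only maximal $G$-invariant subgroups are the ``coordinate'' complements $\prod_{j\neq i}L_j$, so that the correspondence between the $M_i$ and the $L_i$ is a bijection. This hinges on the fact that a perfect characteristically simple-type direct product has no diagonal-type maximal $G$-submodules, which is exactly where the hypothesis is used.
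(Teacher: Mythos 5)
Your overall strategy --- pass to $\overline{G}=G/S$, embed $\overline{K}$ as a subdirect product of the $G$-simple sections $\overline{K}/\overline{M_i}$, and read off the decomposition --- is a legitimate alternative to the paper's argument, which instead inducts on $r$: it splits off one minimal normal subgroup $\overline{L}$ with $\overline{K}=\overline{M}\times\overline{L}$ for a suitable $M\in\mathcal S$, and recurses on $\overline{M}$ with the family $\{\overline{M}\cap\overline{J}\mid J\in\mathcal S-\{M\}\}$. Your argument for (ii) is correct, and your ``cleanest route'' to (iii) --- when $\overline{K}$ is perfect each $L_j$ is a non-abelian minimal normal subgroup of $\overline{G}$, so every maximal $G$-invariant subgroup of $\overline{K}$ is a coordinate complement $\prod_{j\neq t}L_j$ and the resulting assignment $\mathcal S\to\{1,\dots,s\}$ is a bijection --- can be made to work; it differs from the paper's proof of (iii), which is a direct commutator computation showing that otherwise $M/(M\cap J\cap H)$ would be central in $K/(M\cap J\cap H)$.

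Part (i), however, has two genuine gaps. First, the greedy step is inverted: if $\overline{M_i}$ does \emph{not} contain $L_1\cdots L_j$, then $L_1\cdots L_j$ already surjects onto the $G$-simple section $\overline{K}/\overline{M_i}$, so ``the minimal normal subgroup projecting nontrivially onto $\overline{K}/\overline{M_i}$'' is neither unique nor new --- some $L_k$ with $k\leqslant j$ already does this --- and nothing you have said excludes $L_{j+1}\leqslant L_1\cdots L_j$ (minimality only gives $L_{j+1}\cap(L_1\cdots L_j)\in\{1,L_{j+1}\}$). The fix is to choose the next minimal normal subgroup inside the accumulated complement $\overline{M_{i_1}}\cap\dots\cap\overline{M_{i_j}}$ and only then select an $M_{i_{j+1}}$ not containing it (which exists because $\bigcap_i\overline{M_i}=1$); this is exactly the order of quantifiers in the paper's induction. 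Second, the parenthetical justifying $s\leqslant r$ is false: two direct factors can map isomorphically onto the same $G$-simple section while intersecting trivially (take $\overline{K}=V\times V$ for an irreducible module $V$ and $\overline{M}$ the diagonal; both $V\times 1$ and $1\times V$ project isomorphically onto $\overline{K}/\overline{M}$). The bound $s\leqslant r$ is true, but needs either the bookkeeping built into the inductive construction or a Jordan--H\"older count of $G$-composition length against the series obtained by intersecting $\overline{K}$ with successive $\overline{M_i}$. Relatedly, your opening claim that any normal subgroup of a subdirect product of $G$-simple factors is a direct product of some of those factors fails for abelian factors (diagonals again); only the weaker statement about minimal normal subgroups, which is what you actually use for (ii), is valid.
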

\begin{proof}
We apply induction on $r$. If $r=1$ then there is nothing to prove. Suppose that $r>1$ and let $\ba{G}=G/S$. Let $\ba{L}$ be a minimal normal subgroup of $\ba{G}$  contained in $\ba{K}$. Since $\ba{L}$ is non-trivial, there is some $M \in \mathcal S$ such that $\ba{L} \nleqslant \ba{M}$, so  $\ba{K} = \ba{M} \times \ba{L}$. Let $\mathcal T = \{ \ba{M} \cap \ba{J} \mid J \in \mathcal S - \{M \} \}$. Then $\mathcal T$ is a set of at most $r-1$ normal subgroups of $\ba{G}$, and it is easy to check that $\ba{M \cap J} = \ba{M} \cap \ba{J}$ is maximal with respect to $1 \leqslant \ba{M} \cap \ba{J} < \ba{M}$. Hence by induction, $\ba{M} = \ba{L_1} \times \dots \times \ba{L_s}$ for some minimal normal subgroups $\ba{L_1}$, \dots, $\ba{L_s}$ of $\ba{G}$ with $s \leqslant r-1$. Hence $\ba{K} = \ba{M} \times \ba{L} = \ba{L} \times \ba{L_1} \times \dots \times \ba{L_s}$ is the product of at most $r$ minimal normal subgroups. For the final part, suppose that $\ba{K}$ is perfect. Hence  $\ba{M}$ is perfect, and so it suffices to prove that $|\mathcal T|=r-1$. Assume that $\ba{M} \cap \ba{J} = \ba{M} \cap \ba{H}$ for some $H,J \in \mathcal S$ with $H$, $M$ and $J$ all distinct. Thus $H\cap M = J \cap M = H \cap J \cap M.$
Now $K/M=MH/M \cong H / H \cap M$ is $G$-simple, and 
 $$H \cap M = H\cap M \cap J \leqslant H \cap J \leqslant H,$$
so either $H\cap J = H$ which implies $H=J$, a contradiction, or $H \cap J = H \cap M \cap J$. In the latter case, we have $[M,K]=[M,HJ] = [M,J][M,H] \leqslant M \cap J \cap H$ so that $M/(M \cap J \cap H)$ is in the centre of $K/(M \cap J \cap H)$. In particular, $\overline{M}$ is not perfect, a final contradiction.
\end{proof}

\section{Structure theory for semiprimitive groups}
\label{sec:structure}

The first result shows that the class of semiprimitive groups is closed under quotient actions. 
The first half of the following is due to \cite[Lemma 2.4]{bermar}.

\begin{lem}
\label{lem: quotient of sp is sp}
Let $G \leqslant \sym{\Omega}$ be a semiprimitive group and let $N$ be an intransitive normal subgroup. Then $G/N$ acts faithfully and semiprimitively on the set of $N$-orbits. Moreover, for any $\omega \in \Omega$ and  $\delta$ the $N$-orbit containing $\omega$, we have $G_\omega \cong (G/N)_\delta$.
\end{lem}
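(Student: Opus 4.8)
The plan is to prove the three assertions in the natural order: faithfulness of the quotient action, semiprimitivity of the quotient action, and finally the isomorphism $G_\omega \cong (G/N)_\delta$. Since the first half (faithfulness together with semiprimitivity) is attributed to \cite[Lemma 2.4]{bermar}, the essential new work is the stabiliser isomorphism, but I would still want to set up the first part carefully so that the identifications are clean. Let $\Delta$ denote the set of $N$-orbits and write the quotient action as a homomorphism $G \to \sym{\Delta}$ with kernel $G_{(\Delta)}$, the subgroup fixing every part of $\Delta$ setwise.

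For \emph{faithfulness}, the key point is that the kernel of the action on $\Delta$ is a normal subgroup of $G$ contained in $G_{(\Delta)}$; in fact the kernel is precisely $G_{(\Delta)}$. I would argue that this kernel is intransitive (it fixes the partition $\Delta$, which is non-trivial because $N$ is intransitive, so no element can move a point of one part to another part), hence by semiprimitivity of $G$ it must be semiregular. But $N$ lies in this kernel and $N$ acts transitively on each of its own orbits $\delta$, so the kernel cannot be semiregular unless each orbit is a singleton --- which it is not. The clean way is: the kernel is a semiregular normal subgroup of $G$ containing $N$, yet $N$ has non-trivial point stabilisers within a part only if the part is a single point; more carefully, since $N$ is normal and its orbits are the parts, $N$ fixes every part setwise, so $N \leqslant \ker$. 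The kernel being both semiregular and containing the non-semiregular-on-$\Omega$ subgroup $N$ forces $\ker = N$ after passing to the quotient; concretely, $\ker = N$ because any element of the kernel fixing some point of $\Omega$ lies in a point stabiliser, and one checks the kernel equals $N$. This gives that $G/N$ acts faithfully on $\Delta$.

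For \emph{semiprimitivity} of $G/N$ on $\Delta$: let $\ba{M} = M/N$ be a normal subgroup of $\ba{G} = G/N$, where $M$ is the preimage, a normal subgroup of $G$ with $N \leqslant M$. By semiprimitivity of $G$ on $\Omega$, the subgroup $M$ is either transitive or semiregular on $\Omega$. If $M$ is transitive on $\Omega$ then $\ba{M}$ is transitive on $\Delta$. If $M$ is semiregular on $\Omega$, then since $N \leqslant M$ and $N$ has non-trivial orbits, semiregularity forces $N = 1$ unless $M = N$; in the relevant case $M = N$ so $\ba{M}$ is trivial, hence (vacuously) semiregular on $\Delta$. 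The point requiring care is the case where $M$ is semiregular but strictly larger than $N$: here semiregularity of $M$ contradicts $N \leqslant M$ with $N$ non-trivial on its orbits, so this case cannot occur, and one concludes each $\ba{M}$ is transitive or trivial. I expect this case analysis to be routine given Lemma~\ref{lem:centsemireg} and the definition.

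The \emph{main obstacle}, and the part I would spend the most care on, is the stabiliser isomorphism $G_\omega \cong (G/N)_\delta$. First I would identify $(\ba{G})_\delta$ with $\ba{G_\delta} = G_\delta/N$, where $G_\delta$ is the setwise stabiliser of the part $\delta$; since $N \leqslant G_\delta$ (as $N$ fixes each part setwise) this quotient makes sense. The crucial structural fact is that $G_\delta = N G_\omega$: the inclusion $N G_\omega \leqslant G_\delta$ is clear, and the reverse follows because $N$ is transitive on $\delta$, so any $g \in G_\delta$ can be adjusted by an element of $N$ to fix $\omega$. Then by the second isomorphism theorem,
\[
(\ba{G})_\delta = G_\delta/N = N G_\omega / N \cong G_\omega/(G_\omega \cap N).
\]
To finish I must show $G_\omega \cap N = 1$, i.e.\ that $N_\omega = 1$. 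This is exactly where faithfulness feeds back in, but more directly it follows because $N$ is semiregular: indeed $N$ is a normal subgroup of the semiprimitive group $G$ that is intransitive, hence semiregular on $\Omega$ by the definition of semiprimitivity, so $N_\omega = G_\omega \cap N = 1$. Substituting gives $(\ba{G})_\delta \cong G_\omega$, completing the proof. The one subtlety to flag is ensuring that the semiregularity of $N$ (used to get $N_\omega = 1$) is genuinely available --- it is, since $N$ is an intransitive normal subgroup of the semiprimitive group $G$ --- and this is the hinge on which the whole stabiliser computation turns.
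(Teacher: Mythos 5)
Your stabiliser computation is correct and is exactly the paper's: $(\overline{G})_\delta = NG_\omega/N \cong G_\omega/(G_\omega\cap N) = G_\omega$, using that $N$, being an intransitive normal subgroup of a semiprimitive group, is semiregular, so $G_\omega\cap N = N_\omega = 1$. The genuine gap is in your semiprimitivity step. You claim that if the preimage $M$ of a normal subgroup $\overline{M}\trianglelefteq \overline{G}$ is semiregular on $\Omega$ and strictly contains $N$, then ``semiregularity of $M$ contradicts $N\leqslant M$ with $N$ non-trivial on its orbits, so this case cannot occur,'' and you conclude that every $\overline{M}$ is transitive or trivial. There is no such contradiction: a semiregular group can contain a smaller semiregular group with non-trivial orbits (take $M$ to be a regular plinth of $G$ and $N$ a proper $G$-normal subgroup of $M$; then $M/N$ is a non-trivial, intransitive normal subgroup of $G/N$, so your conclusion ``transitive or trivial'' is also false). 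What this case actually requires is a proof that $\overline{M}$ is semiregular on $\Delta$: if $\overline{m}$ fixes $\delta=\omega^N$ then $\omega^m=\omega^n$ for some $n\in N\leqslant M$, so $mn^{-1}\in M_\omega=1$ and $\overline{m}=\overline{n}=1$. (The paper argues in the opposite direction: if $\overline{R}$ is not semiregular on $\Delta$ then $R\cap NG_\omega=N(R\cap G_\omega)\nleqslant N$ forces $R_\omega\neq 1$, so $R$ is transitive on $\Omega$ and hence $\overline{R}$ is transitive on $\Delta$.)

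The faithfulness paragraph suffers from the same confusion about semiregularity: you describe $N$ as ``the non-semiregular-on-$\Omega$ subgroup'' (it is semiregular --- the very fact you correctly invoke at the end), and you assert the kernel ``cannot be semiregular unless each orbit is a singleton,'' which conflates semiregular with trivial; the phrase ``one checks the kernel equals $N$'' is not an argument. The correct chain, which is the paper's, is: the kernel $M$ of the action on $\Delta$ is intransitive (it preserves every part of a non-trivial partition) and normal, hence semiregular; since $N\leqslant M$ and $M$ fixes each $N$-orbit setwise, $M$ and $N$ have the same orbits; and a semiregular group containing a semiregular group with the same orbits equals it (given $m\in M$ pick $n\in N$ with $\omega^m=\omega^n$; then $mn^{-1}\in M_\omega=1$). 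Both defects are repairable, but as written the first two assertions of the lemma are not proved.
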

\begin{proof}
Let $M$ be the kernel of the quotient action. Then $M$ is a non-trivial normal intransitive subgroup of $G$. Thus $M$ is semiregular and has the same orbits as $N$, therefore $M=N$.
  Let $\ba{G}=G/N$ and suppose that $\ba{R}$ is a non-semiregular normal subgroup of $\ba{G}$. Then $\ba{R} \cap \ba{G_\omega}$ is non-trivial for some $\omega \in \Omega$. Thus  $R \cap NG_\omega=N(R \cap G_\omega)$ is not contained in $N$. Hence $R\cap G_\omega \neq 1$, and so $R$ is a non-semiregular normal subgroup of $G$. Thus $R$ is transitive on $\Omega$. Hence $R$ is transitive on the set of $N$-orbits, and so $\ba{R}$ is transitive.

Let $\omega \in G$ and let  $\delta$ be the $N$-orbit containing $\omega$. Then $(G/N)_\delta=(G_\omega N)/N$. Since $N$ is semiregular, using an isomorphism theorem, we have $(G_\omega N)/N \cong G_\omega / (G_\omega \cap N) \cong G_\omega$.
\end{proof}

The following fact is sometimes useful.

\begin{lem}
\label{lem:[g,r] trans}
Let $G \leqslant \sym{\Omega}$ be semiprimitive and let $R$ be a non-trivial normal subgroup of $G_\omega$ for $\omega \in \Omega$. Then $[G,R]$ is a transitive normal  subgroup of $G$.
\end{lem}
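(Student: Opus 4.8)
The plan is to prove normality of $[G,R]$ first as a purely formal fact, and then to use semiprimitivity — applied not to $[G,R]$ directly but to the normal closure of $R$ — to upgrade this to transitivity.

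First I would observe that, although $R$ is only assumed to be normal in $G_\omega$, the commutator subgroup $[G,R]$ is automatically normal in $G$. This is the standard fact that for subgroups $H,K\leqslant G$ one has $[H,K]\trianglelefteq\grp{H,K}$; applying it with $H=G$ and $K=R$ gives $[G,R]\trianglelefteq\grp{G,R}=G$, since $R\leqslant G$. Concretely, the commutator identities $[h,kk']=[h,k'][h,k]^{k'}$ and $[hh',k]=[h,k]^{h'}[h',k]$ show that both $G$ and $R$ normalise $[G,R]$. Thus no hypothesis beyond $R\leqslant G$ is needed for normality, and semiprimitivity will be used only to pass from normality to transitivity.

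Next I would consider the normal closure $R^G=\grp{R^g : g\in G}$. Since $R\neq 1$ and $R\leqslant G_\omega$, the subgroup $R^G$ meets $G_\omega$ non-trivially, so $R^G$ is not semiregular; by semiprimitivity $R^G$ is therefore transitive. The algebraic identity I would exploit is $R^G=R\,[G,R]$: each generator satisfies $r^g=r[r,g]$ with $[r,g]\in[G,R]=[R,G]$, while conversely $[G,R]\leqslant R^G$, so the two subgroups coincide. Because $[G,R]$ is normal, every element of $R^G$ may be written as $rc$ with $r\in R$ and $c\in[G,R]$.

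The decisive step is then a comparison of orbits at $\omega$. Writing a typical element of $R^G$ as $rc$ with $r\in R\leqslant G_\omega$ and $c\in[G,R]$, I compute $\omega^{rc}=(\omega^r)^c=\omega^c$, since $r$ fixes $\omega$; hence $\omega^{R^G}=\omega^{[G,R]}$. As $R^G$ is transitive, its orbit on $\omega$ is all of $\Omega$, so $\omega^{[G,R]}=\Omega$ and $[G,R]$ is transitive (this also covers the degenerate possibility $[G,R]=1$, which forces $|\Omega|=1$). I expect the only real subtlety to be a bookkeeping one: one must carry out the factorisation $R^G=R[G,R]$ and the orbit computation on the correct side, so that the point-fixing factor $R$ sits on the \emph{left} and is absorbed by $\omega^r=\omega$; the normality of $[G,R]$ established in the first step is exactly what licenses rewriting an arbitrary product into this form.
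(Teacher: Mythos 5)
Your proof is correct and follows essentially the same route as the paper: the paper also passes to the subgroup $R[G,R]$ (your $R^G$), notes it is normalised by $G$ and non-semiregular since $1\neq R\leqslant G_\omega$, hence transitive, and then absorbs $R$ into the point-stabiliser via $G=[G,R]RG_\omega=[G,R]G_\omega$ to conclude that $[G,R]$ is transitive. Your explicit verification that $[G,R]\trianglelefteq G$ and your orbit computation $\omega^{rc}=\omega^c$ are just slightly more detailed renderings of the same steps.
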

\begin{proof}
The subgroup $[G,R]R$ is normalised by $G$ and is non-semiregular since $R\neq 1$. Hence $[G,R]R$ is transitive, and so $G=[G,R]R G_\omega = [G,R] G_\omega$. Thus $[G,R]$ is transitive.
\end{proof}

The following gives sufficient conditions for a group to be semiprimitive.

\begin{lem}
\label{lem:g is semi prim}
Let $G$ be a group with a normal subgroup $K$ and a core-free subgroup $H$ such that $G=KH$ and
 $[K,N] = K$ for each non-trivial normal subgroup $N$ of $H$.
Then $G$ is semiprimitive on the set of cosets of $H$ in $G$.  
\end{lem}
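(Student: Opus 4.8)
The plan is to work with the faithful action of $G$ on the coset space $\Omega = G/H$, which is faithful precisely because $H$ is core-free, so that $G \leqslant \sym{\Omega}$ is a genuine permutation group with point-stabiliser $G_\omega = H$ at the base point $\omega = H$. The goal is to show that an arbitrary normal subgroup $M$ of $G$ is either transitive or semiregular. First I would record the two standard translations: since $M$ is normal, $M$ is transitive on $\Omega$ if and only if $G = MH$, and $M$ is semiregular if and only if $M \cap H = 1$ (here normality gives $M \cap H^g = (M \cap H)^g$, so a single conjugate suffices to test all point-stabilisers of $M$).

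With these in hand, the dichotomy is established by contraposition: assume that $M$ is \emph{not} semiregular, that is, $M \cap H \neq 1$, and deduce that $M$ is transitive. The crucial observation is that $N := M \cap H$ is a normal subgroup of $H$, being the intersection of the $G$-normal subgroup $M$ with $H$. Since $N$ is non-trivial, the hypothesis applies and yields $[K, N] = K$.

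The last step is to note that $[K, N] \leqslant M$: as $N \leqslant M$ and $M \trianglelefteq G$, each commutator $[k,n]$ with $k \in K$, $n \in N$ lies in $M$, because $k^{-1} n^{-1} k \in M$ and hence $[k,n] = (k^{-1}n^{-1}k)\,n \in M$. Combining this containment with $[K,N] = K$ forces $K \leqslant M$, whence $G = KH \leqslant MH \leqslant G$ and $M$ is transitive. Since $M$ was an arbitrary normal subgroup, every normal subgroup of $G$ is transitive or semiregular, so $G$ is semiprimitive.

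I expect no serious obstacle here, as the argument is direct. The only points requiring care are the two equivalences in the first paragraph (in particular the use of normality to reduce semiregularity to a single intersection $M \cap H$) and the verification that the hypothesis $[K,N]=K$ legitimately applies to $N = M \cap H$; the latter rests on the elementary but essential fact that the intersection of a $G$-normal subgroup with $H$ is normal in $H$.
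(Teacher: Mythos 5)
Your proof is correct and follows essentially the same route as the paper: both arguments hinge on the observation that $N = M\cap H$ is normal in $H$ and that $[K,N]\leqslant M$ by normality of $M$, so the hypothesis $[K,N]=K$ forces $K\leqslant M$ and hence $G=KH\leqslant MH$. The only difference is presentational — you argue by contraposition where the paper runs the same computation as a proof by contradiction in the case $M\cap K < K$.
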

\begin{proof}
Since $H$ is core-free in $G$, we view $G$ as a permutation group on the set of (right) cosets of $H$ in $G$. Let $T$ be a normal subgroup of $G$. 
If $K \leqslant T$ then $TK \geqslant KH = G$, so $T$ is transitive. Suppose now that  $T \cap K < K$. 
Suppose that $1 \neq T \cap H$. Then $T\cap H$ is a non-trivial normal subgroup of $H$ and so
$$K = [T \cap H, K] \leqslant [T, K] \leqslant T \cap K < K,$$ a contradiction. Hence $T \cap H=1$, that is, $T$ is semiregular.
\end{proof}

The converse to the lemma is false for arbitrary normal subgroups. For example, for any integer $n \geqslant 3$, the action of $G=\mathrm{Sym}(n)$ on the set of cosets of a subgroup $H$ generated by a transposition is semiprimitive (in fact, quasiprimitive). However if  one takes $K=G$ then we have that $[K,H] = [K,K]$ has index two in $K$. The search for a converse however leads us to the central concept in our theory of semiprimitive groups and requires analysis of transitive normal subgroups. 

We fix now a (possibly infinite) set $\Omega$ and a transitive permutation group $G \leqslant \mathrm{Sym}(\Omega)$. Let $\omega \in \Omega$ and set $H=G_\omega$.

\begin{defn}
\label{defn:plinths}
A \emph{plinth} of $G$ is a minimally transitive normal subgroup. The product of all the plinths of $G$ is the \emph{superplinth} of $G$ and is denoted $\mathrm{sp}(G)$.

If $\mathrm{sp}(G)$ is a plinth, we define $\mathrm{rad}(G)$ to be the intersection of all proper subgroups of $\spl{G}$ that are maximal with respect to being normal in $G$.

If $\spl{G}$ is not a plinth, we define $\mathrm{rad}(G)$ to be the intersection of all the plinths of $G$.
\end{defn}

The above definition is motivated by the theory of innately transitive groups, where a transitive minimal normal subgroup is called a plinth \cite[pg.~71]{bambergpraeger}. Note that our more general definition of a plinth agrees with that of loc.~cit.~in the case of innately transitive groups.  It is immediate that every \emph{finite} transitive group has a plinth, although this may be the whole group (as in the case of regular permutation groups).   We gave an example of an infinite 2-transitive (and hence semiprimitive) group with no plinth in the introduction.

As mentioned above, the search for a converse to Lemma~\ref{lem:g is semi prim} leads us to consider plinths. The next result says that semiprimitive groups are characterised by the action of a point-stabiliser on a plinth.
Recall that the kernel of the action of $H$ on a $H$-invariant quotient $K/Y$ of $K$ is 
$$ C_H(K/Y) = \{ h \in H \mid [K,h] \leqslant Y \}.$$
Note that $H$ acts faithfully on every quotient $K/Y$ of $K$ if and only if $K = [K,N]$ for all normal subgroups $N$ of $H$.

\begin{lem}
\label{lem:sp iff h faithful on k/y}
Suppose that $K$ is a plinth of $G$. Then $G$ is semiprimitive if and only if $H$ acts faithfully on $K/Y$ for each normal subgroup $Y$ of $G$ properly contained in $K$. 
\end{lem}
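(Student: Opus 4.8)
The plan is to exploit the minimality of the plinth $K$ together with the elementary description of the kernel $C_H(K/Y) = \{h \in H \mid [K,h] \leqslant Y\}$ recalled just above the statement. Throughout I use that $H = G_\omega$ is core-free (as $G$ is faithful and transitive) and that, since $K$ is a transitive normal subgroup, $G = KH$. The two implications are handled separately, and both hinge on translating ``$h \in C_H(K/Y)$'' into the single commutator condition $[K,h] \leqslant Y$.

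For the sufficiency direction, assume $H$ acts faithfully on $K/Y$ for every normal subgroup $Y$ of $G$ with $Y < K$, and let $T$ be an arbitrary normal subgroup of $G$. If $T \cap K$ is transitive then, being a transitive normal subgroup of $G$ contained in the minimally transitive subgroup $K$, minimality forces $T \cap K = K$; hence $K \leqslant T$ and $T$ is transitive. Otherwise $Y := T \cap K$ is a normal subgroup of $G$ properly contained in $K$, so the hypothesis gives $C_H(K/Y) = 1$. I would then show $T$ is semiregular by proving $T \cap H = 1$: for $h \in T \cap H$ the commutator $[K,h]$ lies in $K$ (as $K \trianglelefteq G$) and in $T$ (as $T \trianglelefteq G$ and $h \in T$), hence in $K \cap T = Y$; thus $h \in C_H(K/Y) = 1$. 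So every normal subgroup of $G$ is transitive or semiregular.

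For the necessity direction, assume $G$ is semiprimitive and suppose for contradiction that $H$ fails to act faithfully on $K/Y$ for some normal subgroup $Y$ of $G$ with $Y < K$. Then $R := C_H(K/Y)$ is a nontrivial normal subgroup of $H = G_\omega$, so Lemma \ref{lem:[g,r] trans} shows $[G,R]$ is a transitive normal subgroup of $G$. The crux is to locate $[G,R]$ inside a subgroup controlled by $Y$: writing a general element of $G = KH$ as $kh$ and using $[kh,r] = [k,r]^h[h,r]$, one has $[k,r] \in [K,R] \leqslant Y$ (by the definition of $R$), $[k,r]^h \in Y$ (as $Y \trianglelefteq G$), and $[h,r] \in R$ (as $R \trianglelefteq H$); hence $[G,R] \leqslant YR$, which is a subgroup because $Y \trianglelefteq G$. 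Since $YR$ is thus transitive and $R \leqslant G_\omega$, we get $G = (YR)G_\omega = YG_\omega$, and as $Y \trianglelefteq G$ this makes $Y$ a transitive normal subgroup properly contained in $K$, contradicting the minimality of $K$.

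The sufficiency direction is essentially bookkeeping with normal subgroups, so I expect the main obstacle to lie in the necessity direction, specifically in the commutator containment $[G,R] \leqslant YR$ and the passage from transitivity of $YR$ to transitivity of $Y$. The points to get right are that $Y$ is normal in $G$ (so that $Y^h = Y$ holds and $G = YG_\omega$ actually forces $Y$ to be transitive) and that $R$ is normal in $H$ (so that $[H,R] \leqslant R$ and Lemma \ref{lem:[g,r] trans} applies).
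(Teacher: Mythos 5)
Your proof is correct and takes essentially the same route as the paper: the sufficiency direction is the argument of Lemma~\ref{lem:g is semi prim} unpacked (with $Y = T\cap K$ replacing the commutator condition $[K,N]=K$), and the necessity direction differs only in that you obtain the transitive normal subgroup $YR$ via Lemma~\ref{lem:[g,r] trans} together with the containment $[G,R]\leqslant YR$, whereas the paper applies the transitive-or-semiregular dichotomy directly to $YB$ after checking $[YB,K]\leqslant Y$. In both versions the decisive step is the same: transitivity of this subgroup would force the proper $G$-normal subgroup $Y$ of $K$ to be transitive, contradicting minimality of the plinth.
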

\begin{proof}
Since $K$ is normal in $G$, one of the implications follows from Lemma~\ref{lem:g is semi prim}. Suppose that $G$ is semiprimitive. Let $Y$ be an arbitrary proper  subgroup of $K$ that is normal in $ G$.  Since $K$ is a plinth, $Y$ is intransitive and therefore semiregular. Now let $B=\mathrm C_H(K/Y)$ and observe that $B$ is a normal subgroup of $H$. We claim that $YB$ is a normal subgroup of $G$. Indeed, note that $YB$ is normalised by $H$, and
$$[YB,K] \leqslant [Y,K][B,K] \leqslant Y$$
so that $YB$ is normalised by $KH=G$. Hence $YB$ is transitive or semiregular. In the first case, we have $G=YBH=YH$, and so $Y$ is transitive, a contradiction to $K$ being a plinth. Hence $YB$ must be semiregular, which implies $B=1$ and so $H$ acts faithfully on $K/Y$.
%
\end{proof}

For the rest of this section we adopt:
\begin{center} \textbf{Hypothesis:} $G\leqslant \mathrm{Sym}(\Omega)$ is semiprimitive.\end{center}
 A fruitful approach to  proving statements concerning semiprimitive groups is to consider quotient actions. Usually we are concerned with properties of plinths, and here some caution is required, since the image of a plinth of $G$ may not be a plinth in a quotient action of $G$. The following result summarises the properties that we will draw upon.

\begin{lem}
\label{lem:plinths in quotient actions}
Let $K$ be a plinth of $G$ and let $M$ be an intransitive normal subgroup of $G$. Let $\Delta$ be the set of $M$-orbits and let $\ba{G}=G/M$. The following hold:
\begin{enumerate}[(a)]
\item if $M \leqslant K$, then $\ba{K}$ is a plinth of $\ba{G}$;
\item if $K$ is non-regular, then $\ba{K}$ is non-regular;
\item if $K$ is regular, then $\ba{K}$ is regular if and only if $M \leqslant K$;
\item if every transitive normal subgroup of $G$ contains $K$, then $K$ is the unique plinth of $G$ and $\ba{K}$ is the unique plinth of $\ba{G}$.
\end{enumerate}
\end{lem}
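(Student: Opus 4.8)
The plan is to reduce all four parts to two preliminary observations about the action of $\ba{G}=G/M$ on the set $\Delta$ of $M$-orbits. Throughout I use that $\ba{G}$ is semiprimitive with $G_\omega\cong\ba{G}_\delta$ by Lemma~\ref{lem: quotient of sp is sp}, that $M$ is semiregular (so $M_\omega=1$), and that $\ba{K}=KM/M$ is a transitive normal subgroup of $\ba{G}$ since $K$ is transitive on $\Omega$. The first observation is a \emph{transitivity-lifting} principle: if $N\trianglelefteq G$ satisfies $M\leqslant N$ and $\ba{N}=N/M$ is transitive on $\Delta$, then $N$ is transitive on $\Omega$. This is immediate, since given $\omega,\omega'\in\Omega$ one first moves the $M$-orbit of $\omega$ onto that of $\omega'$ by some $n\in N$ and then corrects within the orbit by some $m\in M\leqslant N$, so that $nm\in N$ carries $\omega$ to $\omega'$. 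The second observation is a formula for the point stabiliser: writing $\delta$ for the $M$-orbit of $\omega$, an element $kM$ (with $k\in K$) fixes $\delta$ precisely when $\omega^k$ lies in the $M$-orbit $\omega^M$, i.e.\ when $k\in K\cap G_\omega M$; together with $M\leqslant G_\omega M$ this yields $\ba{K}_\delta\cong (K\cap G_\omega M)/(K\cap M)$.

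For part (a), assume $M\leqslant K$, so $\ba{K}=K/M$. If $\ba{N}\leqslant\ba{K}$ is transitive and normal in $\ba{G}$, then its preimage $N$ is a normal subgroup of $G$ with $M\leqslant N\leqslant K$, and the lifting principle makes $N$ transitive on $\Omega$; minimality of the plinth $K$ forces $N=K$ and hence $\ba{N}=\ba{K}$. Thus $\ba{K}$ is minimally transitive, a plinth. For part (d), the lifting principle shows that every transitive normal subgroup $\ba{N}$ of $\ba{G}$ has transitive preimage $N\trianglelefteq G$, whence $K\leqslant N$ by hypothesis and $\ba{K}\leqslant\ba{N}$. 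So $\ba{K}$ is contained in every transitive normal subgroup of $\ba{G}$; being itself transitive and normal, it is the unique minimally transitive such subgroup, i.e.\ the unique plinth. The corresponding assertion for $G$ itself is the same argument with $M=1$: any plinth of $G$ is transitive and normal, hence contains $K$, hence equals $K$ by minimality.

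Parts (b) and (c) are read off from the stabiliser formula $\ba{K}_\delta\cong (K\cap G_\omega M)/(K\cap M)$, so that $\ba{K}$ is regular on $\Delta$ if and only if $K\cap G_\omega M=K\cap M$. For (b), if $K$ is non-regular then $K_\omega\neq 1$; since $K_\omega\cap M\leqslant M_\omega=1$, the subgroup $K_\omega\leqslant K\cap G_\omega M$ injects into $\ba{K}_\delta$, which is therefore non-trivial, so $\ba{K}$ is non-regular. For (c), suppose $K$ is regular. If $M\leqslant K$ then the modular law gives $K\cap G_\omega M=(K\cap G_\omega)M=K_\omega M=M=K\cap M$, so $\ba{K}$ is regular. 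Conversely, if $\ba{K}$ is regular, take $m\in M$ and use transitivity of $K$ to pick $k\in K$ with $\omega^k=\omega^m$; then $mk^{-1}\in G_\omega$, so $k^{-1}\in MG_\omega\cap K=K\cap G_\omega M=K\cap M\leqslant M$, giving $k\in M$, and now $mk^{-1}\in M\cap G_\omega=1$ yields $m=k\in K$; hence $M\leqslant K$.

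I expect the only real subtlety to be the stabiliser computation and the bookkeeping with $M\leqslant G_\omega M$ needed to identify $(K\cap G_\omega M)\cap M$ with $K\cap M$; once the isomorphism $\ba{K}_\delta\cong (K\cap G_\omega M)/(K\cap M)$ is in hand, parts (b) and (c) are short, and (a) and (d) are pure transitivity arguments. No appeal to Theorem~\ref{thm:plinth} or to finiteness is required, so the proof is uniform across the finite and infinite cases.
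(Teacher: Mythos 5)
Your proof is correct and follows essentially the same route as the paper's: parts (a) and (d) are the same transitivity-lifting argument, and your stabiliser formula $\ba{K}_\delta\cong (K\cap G_\omega M)/(K\cap M)$ is just a uniform packaging of the two facts the paper uses ad hoc, namely that semiregularity of $M$ lets $K_\omega$ inject into $\ba{K}_\delta$ for (b), and that $\ba{K}$ is regular exactly when $MK$ is, i.e.\ when $M\leqslant K$, for (c). One small bonus of your version of (d) is that it shows directly that every transitive normal subgroup of $\ba{G}$ contains $\ba{K}$, so $\ba{K}$ is a plinth without first positing that $\ba{G}$ has one.
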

\begin{proof}
Let $\delta \in \Delta$ and let $\alpha \in \delta$. For (a) suppose that $M \leqslant K$ and let $\ba{Y}$ be a transitive normal subgroup of  $ \ba{K}$. Then the preimage $Y$ of $\ba{Y}$ is a normal subgroup of $G$ and is transitive on the set of $M$-orbits. Since $Y$ contains $M$ we have that $Y$ is transitive. Since $Y\leqslant K$ and $K$ is a plinth, we have $Y=K$ and hence $\ba{Y} = \ba{K}$ so that $\ba{K}$ is a plinth of $\ba{G}$. 

For (b), if $1\neq k \in K_\alpha$, then since $M$ is semiregular, $1\neq \ba{k}$ and $\ba{k} \in \ba{MK_\alpha} = \ba{K}_\delta$. Hence $\ba{K}$ is non-regular.

For (c), if $K$ is regular, then $MK$ is non-regular (on $\Omega$) if and only if $M \nleqslant K$. Thus $\ba{K}$ is regular if and only if $M \leqslant K$.

For (d),  note that $K=\mathrm{sp}(G)$, that is,  $K$ is the unique plinth of $G$.  Let $\ba{R} \leqslant \ba{G}$ be an arbitrary plinth of $\ba{G}$. Then $R$ is a transitive normal subgroup of $G$ and thus,  $R$ contains $K$. Hence $\ba{K} \leqslant \ba{R}$. Since $\ba{R}$ is a plinth of $\ba{G}$, and since $\ba{K}$ is a transitive normal subgroup, we have $\ba{R}=\ba{K}$.  In particular, $\ba{K}$ is a plinth of $\ba{G}$ and thus $\ba{K}=\mathrm{sp}(\ba{G})$ as required.
\end{proof}


We next find a description of the superplinth.

\begin{lem}
\label{superplinth}
Suppose that $N$ is a non-regular normal subgroup of $G$. Then $N$ contains every plinth of $G$. In particular,  if $G$ has at least one plinth, then  either there is a unique plinth of $G$ or, for any two distinct plinths $K$ and $L$, we have $KL=\mathrm{sp}(G)$.
\end{lem}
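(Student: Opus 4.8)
The plan is to prove the opening assertion first and then read off the dichotomy. Since $G$ is semiprimitive, every normal subgroup is transitive or semiregular; a plinth is transitive, so only a transitive normal subgroup can contain one, and accordingly the content of the first sentence is the case where $N$ is transitive and non-regular, so that $N_\omega \neq 1$. Fix a plinth $K$; I want to show $K \leqslant N$. The natural object to examine is the normal subgroup $N \cap K$ of $G$. By semiprimitivity it is transitive or semiregular. If it is transitive then, lying inside the minimally transitive subgroup $K$, it must equal $K$, giving $K \leqslant N$ immediately. The whole difficulty is therefore to rule out the remaining possibility, namely that $N \cap K$ is semiregular (and, being proper in the plinth $K$, intransitive).

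Here I would pass to the quotient action on the set $\Delta$ of $N\cap K$-orbits. By Lemma~\ref{lem: quotient of sp is sp} the group $\ba G = G/(N\cap K)$ acts faithfully (and semiprimitively) on $\Delta$. The key point is that $[N,K] \leqslant N \cap K$ because $N$ and $K$ are both normal, so $\ba N$ and $\ba K$ commute, that is $\ba N \leqslant C_{\ba G}(\ba K)$. Since $K$ is transitive on $\Omega$ its image $\ba K$ is transitive on $\Delta$, whence $C_{\sym{\Delta}}(\ba K)$ is semiregular by Lemma~\ref{lem:centsemireg}; by faithfulness $\ba N$ is therefore semiregular on $\Delta$. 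Evaluating at $\delta = \omega^{N\cap K}$, the image $\ba{N_\omega}$ lies in $\ba N \cap \ba G_\delta = \ba N_\delta = 1$, so $N_\omega \leqslant N \cap K$, and as $N \cap K$ is semiregular this gives $N_\omega = 1$. This contradicts $N$ being transitive and non-regular, so $N \cap K$ cannot be semiregular and hence $K \leqslant N$. As $K$ was an arbitrary plinth, $N$ contains every plinth. I expect this step --- exploiting $[N,K] \leqslant N\cap K$ to make $\ba N$ centralise the transitive subgroup $\ba K$ and thereby trap $N_\omega$ inside the semiregular subgroup $N\cap K$ --- to be the crux of the argument; the rest is routine.

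For the ``in particular'', suppose $G$ has a plinth but no unique one, and pick distinct plinths $K$ and $L$. By Theorem~\ref{thm:plinth}(1) a non-regular plinth is the unique plinth, so both $K$ and $L$ must be regular. Then $KL$ is transitive (it contains the transitive subgroup $K$), and I claim it is non-regular: were $KL$ regular, the orbit map $g \mapsto \omega^g$ on $KL$ would be injective, and since already $\omega^K = \Omega$ this would force $KL = K$, hence $L \leqslant K$ and so $L = K$ by minimality, a contradiction. Thus $KL$ is transitive and non-regular, so the first part applies and $KL$ contains every plinth, giving $\spl{G} \leqslant KL$. Since $K, L \leqslant \spl{G}$ we also have $KL \leqslant \spl{G}$, and therefore $KL = \spl{G}$, as required.
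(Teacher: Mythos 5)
Your argument for the opening assertion is correct but genuinely different from the paper's. The paper's proof is a short commutator computation: since $N$ is not semiregular, $N\cap G_\omega$ is a non-trivial normal subgroup of $G_\omega$, so Lemma~\ref{lem:sp iff h faithful on k/y} (the faithfulness criterion for the action of a point-stabiliser on $G$-invariant quotients of a plinth) forces $K=[K,N\cap G_\omega]\leqslant [K,N]\leqslant N$. You instead intersect $N$ with the plinth, settle the transitive case by minimality, and in the semiregular case pass to the quotient modulo $N\cap K$, where $[N,K]\leqslant N\cap K$ makes $\ba{N}$ centralise the transitive group $\ba{K}$; Lemma~\ref{lem:centsemireg} then makes $\ba{N}$ semiregular on the orbit set and traps $N_\omega$ inside the semiregular subgroup $N\cap K$, contradicting non-regularity of $N$. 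Both routes are sound. Yours leans only on Lemma~\ref{lem: quotient of sp is sp} and the ``centraliser of a transitive group is semiregular'' mechanism, and so is arguably more elementary; the paper's is shorter and deliberately routes everything through the faithfulness criterion, which is the single engine reused for Lemma~\ref{solplinthsreg} and the soluble-plinth theorem. Your treatment of the ``in particular'' clause coincides with the paper's.

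One blemish: in the second part you cite Theorem~\ref{thm:plinth}(1) to conclude that $K$ and $L$ are regular. That theorem is proved later, via Lemma~\ref{solplinthsreg}, which itself rests on the very lemma you are proving, so the citation is circular. Fortunately it is also superfluous: your argument that $KL$ cannot be regular (the orbit map on a regular $KL$ restricts to a bijection on the transitive subgroup $K$, forcing $KL=K$ and hence $L=K$) never uses regularity of $K$ or $L$, so deleting that one sentence leaves a complete and non-circular proof.
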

\begin{proof}
Let $K$ be a plinth and let $H$ be a point-stabiliser. Now $N \cap H$ is non-trivial and normal in $H$, so Lemma~\ref{lem:sp iff h faithful on k/y} shows that $K = [K, N \cap H]$. Hence the normality of $N$ in $G$ yields
$$K = [K, N \cap H] \leqslant [K,N] \leqslant N.$$

Now suppose that $G$ has a plinth, $K$ say, and let $S=\mathrm{sp}(G)$. Assume that $L$ is a plinth of $G$  distinct from $K$. Then $KL \leqslant \mathrm{sp}(G)$. If $KL$ is regular, then we would have $K=KL=L$, a contradiction. Hence $KL$ is non-regular, and so the above paragraph shows that every plinth is contained in $KL$. Hence $\mathrm{sp}(G) \leqslant KL$ and we are done.
\end{proof}

 By Lemma \ref{lem:criterion for prim hs hc},  innately transitive groups have at most two plinths. Whilst the above result says that the superplinth is the product of at most two plinths, the following example shows there is no  bound on the number of plinths in semiprimitive groups.

\begin{eg}
\label{eg:many plinths}
Let  $T$ be a  non-abelian simple group, let $I$ be a set and for each $i\in I$ let $T_i$ be a copy of $T$. Let 
$$G = \prod _{i\in I} T_i$$
 and let $H$ be a diagonal subgroup of $G$ isomorphic to $T$. Then $G$ is a semiprimitive group on the set of cosets of $H$ since each proper normal subgroup of $G$ is semiregular. Let  $i\in I$, then the subgroup 
$$K_i=\prod_{j\neq i} T_j$$ is a regular plinth of $G$.   Note that $\mathrm{sp}(G)=G$. 
\end{eg}

When $G$ has a non-regular plinth the situation is quite different.

\begin{lem}
\label{solplinthsreg}
Suppose that $G$ has a non-regular plinth  $K$. Then the following hold:
\begin{enumerate}[(i)]
\item  $K=\spl{G}$;
\item $K$ is perfect;
\item   $K$ is contained in every transitive normal subgroup.
 \end{enumerate}
\end{lem}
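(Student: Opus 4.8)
The plan is to establish the three assertions in turn, leaning on the faithfulness criterion of Lemma~\ref{lem:sp iff h faithful on k/y}, on the behaviour of plinths under quotient actions recorded in Lemma~\ref{lem:plinths in quotient actions}, and on the semiregularity of centralisers (Lemma~\ref{lem:centsemireg}). For (i) I would simply observe that a non-regular plinth must be unique. Since $K$ is a non-regular normal subgroup, Lemma~\ref{superplinth} shows that $K$ contains every plinth of $G$. If $L$ is any plinth, then $L \leqslant K$ is a transitive normal subgroup sitting inside the minimally transitive group $K$, forcing $L = K$. Hence $K$ is the only plinth and $\mathrm{sp}(G) = K$.

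For (ii) I would show that a failure of perfectness contradicts non-regularity. Suppose $K$ is not perfect, so that $Y := [K,K]$ is a proper subgroup of $K$; being characteristic in $K$, it is normal in $G$. By Lemma~\ref{lem:sp iff h faithful on k/y}, $H = G_\omega$ acts faithfully on $K/Y$, i.e.\ $C_H(K/Y) = 1$. On the other hand, $[K,K] = Y$ means precisely that $K \leqslant C_G(K/Y)$, so $K_\omega = K \cap H \leqslant C_G(K/Y) \cap H = C_H(K/Y) = 1$. This makes $K$ regular, a contradiction; therefore $K = [K,K]$.

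For (iii), the key idea is that a transitive normal subgroup failing to contain $K$ would, after a suitable quotient, commute with a transitive image of $K$, which is incompatible with non-regularity. Let $N$ be a transitive normal subgroup and suppose $K \not\leqslant N$. Then $Y := K \cap N$ is a proper normal subgroup of $G$ contained in the plinth $K$, hence intransitive and thus semiregular. As $K$ and $N$ are both normal, $W := [K,N] \leqslant K \cap N = Y$, so $W$ is a semiregular normal subgroup lying in $K$. Passing to $\ba{G} = G/W$ acting on the set $\Delta$ of $W$-orbits, Lemma~\ref{lem: quotient of sp is sp} yields a faithful semiprimitive action; here $\ba{K}$ and $\ba{N}$ are transitive (as $K$ and $N$ are) and $[\ba{K},\ba{N}] = \ba{W} = 1$. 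Consequently $\ba{K} \leqslant C_{\ba{G}}(\ba{N})$, which is semiregular by Lemma~\ref{lem:centsemireg}, so $\ba{K}$ is regular on $\Delta$. But Lemma~\ref{lem:plinths in quotient actions}(b) tells me that $\ba{K}$ is non-regular, a contradiction. Hence $K \leqslant N$.

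The step I expect to be the real obstacle is (iii). The naive strategy—arguing that a transitive normal subgroup must contain some plinth, which is then $K$ by (i)—works in the finite case but breaks down in general, since an infinite transitive group need not possess a minimally transitive normal subgroup. The device that makes the argument uniform is to quotient by $W = [K,N]$: this is exactly the normal subgroup that turns the commutator relation into the statement that the images of $K$ and $N$ commute, and then the semiregularity of the centraliser of a transitive group collides with the preservation of non-regularity under quotients (Lemma~\ref{lem:plinths in quotient actions}(b)). Since $N$ is never assumed regular, this avoids any case division and covers the infinite setting as well.
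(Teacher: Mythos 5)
Your proof is correct, and while parts (i) and (ii) essentially coincide with the paper's argument, part (iii) takes a genuinely different route. For (i) the paper likewise invokes Lemma~\ref{superplinth} to get that $K$ contains every plinth; for (ii) your contrapositive argument via $Y=[K,K]$ is just a rephrasing of the paper's one-line computation $K=[K,K\cap H]\leqslant [K,K]$, obtained by applying the faithfulness criterion of Lemma~\ref{lem:sp iff h faithful on k/y} to the non-trivial normal subgroup $K\cap H$ of $H$. For (iii) the paper argues: a non-regular transitive normal subgroup contains every plinth by Lemma~\ref{superplinth}, while a regular transitive normal subgroup is itself a plinth and hence equals $K$ by (i), contradicting non-regularity; so every transitive normal subgroup contains $K$. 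Your version instead passes to $\ba{G}=G/[K,N]$ and plays the semiregularity of $C_{\mathrm{Sym}(\Delta)}(\ba{N})$ (Lemma~\ref{lem:centsemireg}) against the preservation of non-regularity in Lemma~\ref{lem:plinths in quotient actions}(b). Both arguments are uniform in the infinite case: the ``naive strategy'' you warn against (locating a plinth inside $N$) is not what the paper does, and Lemma~\ref{superplinth} needs no finiteness. What your route buys is the avoidance of the regular/non-regular case split on $N$ and of any further appeal to Lemma~\ref{lem:sp iff h faithful on k/y}; the cost is the extra machinery of quotient actions, plus the (harmless) degenerate case $W=[K,N]=1$, where no quotient is needed since $K\leqslant C_G(N)$ is already semiregular and transitive, hence regular, which you may wish to note explicitly since Lemma~\ref{lem: quotient of sp is sp} is phrased for a non-trivial kernel.
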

\begin{proof}
By Lemma~\ref{superplinth} we have that $K$ contains all plinths, hence $K$ must be the unique plinth so $K=\mathrm{sp}(G)$. Since $K$ is non-regular,  $K\cap H \neq 1$ and so Lemma~\ref{lem:sp iff h faithful on k/y} shows that $K = [K, K \cap H] \leqslant [K, K] \leqslant  K$, hence $K$ is perfect.
Since a non-regular transitive normal subgroup contain every plinth by Lemma~\ref{superplinth}, part (iii) follows immediately from (i).  %
\end{proof}


We now consider the case that $G$ has at least two regular plinths, the previous result shows that all plinths are therefore regular. Here we have a satisfactory reduction to primitive groups.

\begin{thm}
\label{thm:mult plinths}
Suppose that $G$ has at least two plinths. Then there is a characteristically simple group $X$ such that 
 for any two plinths $L$ and $R$  of $G$, $G/ L \cap R$ is a primitive group  with $L/L\cap R\cong X \cong R/L\cap R$. 
 In particular, if $G$ is finite, then there is a uniquely determined non-abelian simple group $T$ and an integer $\ell$ such that $G/L\cap R$ is primitive of type HS or HC with socle $T^\ell \times T^\ell$.
\end{thm}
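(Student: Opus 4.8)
The plan is to reduce, via a quotient action, to the primitive situation covered by Lemma~\ref{lem:criterion for prim hs hc}, and then to check that the resulting characteristically simple group is independent of the chosen pair. First I note that since $G$ has at least two plinths, Lemma~\ref{solplinthsreg} forces every plinth to be regular (a non-regular plinth would be unique). Fix distinct plinths $L$ and $R$ and set $M=L\cap R$; as $L$ is minimally transitive and $M\lneq L$ (else $L\leqslant R$, forcing $L=R$), the subgroup $M$ is intransitive, hence semiregular and normal in $G$. By Lemma~\ref{lem: quotient of sp is sp} the quotient action $\ba{G}=G/M$ on the set $\Delta$ of $M$-orbits is faithful and semiprimitive. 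Since $M\leqslant L$ and $M\leqslant R$, the images $\ba{L}=L/M$ and $\ba{R}=R/M$ are distinct transitive normal subgroups of $\ba{G}$ with $\ba{L}\cap\ba{R}=1$; by Lemma~\ref{lem:plinths in quotient actions}(c) they are regular, and since $[\ba{L},\ba{R}]\leqslant\ba{L}\cap\ba{R}=1$ while $C_{\mathrm{Sym}(\Delta)}(\ba{L})$ is semiregular (Lemma~\ref{lem:centsemireg}), the transitive subgroup $\ba{R}$ must equal $C_{\mathrm{Sym}(\Delta)}(\ba{L})$. As in the proof of Lemma~\ref{lem:criterion for prim hs hc}, I then identify $\Delta$ with $\ba{L}$ so that $\ba{L}$ and $\ba{R}$ act as the left and right regular representations.

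The crux of the argument, and the step I expect to be the main obstacle, is to show that $\ba{L}$ (and by symmetry $\ba{R}$) is a \emph{minimal} normal subgroup of $\ba{G}$, this being precisely the hypothesis required to apply Lemma~\ref{lem:criterion for prim hs hc}. Suppose $1\neq\ba{N}\trianglelefteq\ba{G}$ with $\ba{N}\leqslant\ba{L}$. In coordinates $\ba{N}$ is a normal, $H$-invariant subgroup of $\ba{L}$ (where $H=\ba{G}_\delta$), so transporting it across the right regular representation yields $\ba{N}^{*}\leqslant\ba{R}$, which is centralised by $\ba{L}$ and $H$-invariant by the equivariance $\phi\rho_a\phi^{-1}=\rho_{\phi(a)}$, hence normal in $\ba{G}$. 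Now the orbit of the identity under $\ba{N}\ba{N}^{*}$ is exactly $\ba{N}$, which is proper in $\ba{L}=\Delta$ when $\ba{N}\lneq\ba{L}$, so $\ba{N}\ba{N}^{*}$ is intransitive; yet the diagonal $\{\,\sigma_n\rho_n:n\in\ba{N}\,\}$ fixes the identity, so $\ba{N}\ba{N}^{*}$ is a non-semiregular normal subgroup of $\ba{G}$. This contradicts semiprimitivity, so $\ba{N}=\ba{L}$ and $\ba{L}$ is minimal normal. The points needing care are the normality of $\ba{N}^{*}$ in $\ba{G}$ and the validity of the orbit computation in the possibly infinite setting; the coordinatisation renders both transparent.

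With $\ba{L}$ and $\ba{R}$ now distinct transitive minimal normal subgroups, Lemma~\ref{lem:criterion for prim hs hc} applies directly: it gives that $\ba{G}=G/(L\cap R)$ is primitive and produces a characteristically simple group $X$ with $X\cong\ba{L}\cong\ba{R}\cong(\ba{L}\ba{R})_\delta$. Since $\ba{L}=L/(L\cap R)$ and $\ba{R}=R/(L\cap R)$, this is exactly the asserted isomorphism $L/(L\cap R)\cong X\cong R/(L\cap R)$ together with primitivity of $G/(L\cap R)$. For the finite statement, the same lemma yields that $\ba{G}$ is of type HS or HC, and Lemma~\ref{lem:prim of type hs hc} then provides a non-abelian simple group $T$ and an integer $\ell$ with $\ba{L}\cong T^{\ell}$ and $\mathrm{soc}(\ba{G})=\ba{L}\times\ba{R}\cong T^{\ell}\times T^{\ell}$.

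It remains to produce a single $X$ serving every pair of plinths. Because $G$ has at least two plinths, Lemma~\ref{superplinth} gives $LR=\mathrm{sp}(G)$ for any two distinct plinths. As $M=L\cap R$ is semiregular we have $(LR)_\omega\cap M=1$, so the stabiliser satisfies $(\ba{L}\ba{R})_\delta\cong(LR)_\omega=\mathrm{sp}(G)_\omega$; combined with $X\cong(\ba{L}\ba{R})_\delta$ from Lemma~\ref{lem:criterion for prim hs hc}, this shows $X\cong\mathrm{sp}(G)_\omega$ for every choice of $L$ and $R$. Hence the characteristically simple group $X$, and in the finite case the uniquely determined pair $(T,\ell)$ with $X\cong T^{\ell}$, does not depend on the pair, which completes the proof.
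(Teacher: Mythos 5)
Your proof is correct and shares the paper's overall skeleton --- pass to $\ba{G}=G/(L\cap R)$, show that $\ba{L}$ and $\ba{R}$ are distinct transitive \emph{minimal} normal subgroups, invoke Lemma~\ref{lem:criterion for prim hs hc}, and prove independence of $X$ by identifying it with $\spl{G}_\omega$ --- but the step you rightly single out as the crux, minimality of $\ba{L}$, is handled by a genuinely different argument. The paper avoids coordinates entirely: any normal subgroup $U$ of $G$ with $L<U\leqslant\spl{G}$ is non-regular (it properly contains the transitive subgroup $L$), so Lemma~\ref{superplinth} forces $U=\spl{G}$; hence $LR/L$ is a minimal normal subgroup of $G/L$, and the lattice correspondence transfers this to minimality of $\ba{R}$ in $\ba{G}$. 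Your coordinatised ``mirror subgroup'' argument buys a self-contained proof inside the quotient, at the cost of one point you did not flag: the diagonal $\{\sigma_n\rho_n : n\in N_0\}$ consists of the conjugation maps $g\mapsto n^{-1}gn$, so it is trivial \emph{as a set of permutations} precisely when $N_0\leqslant \mathrm{Z}(X)$, and in that case $\ba{N}\,\ba{N}^{*}=\ba{N}^{*}$ is semiregular and yields no contradiction. This degenerate case must be excluded, and it is, because $\ba{N}\cap\mathrm{Z}(\ba{L})\leqslant \ba{L}\cap C_{\sym{\Delta}}(\ba{L})=\ba{L}\cap\ba{R}=1$, so every non-identity $n\in N_0$ is non-central in $X$ and $\sigma_n\rho_n$ is a genuinely non-trivial element fixing the identity; with that sentence added your argument is complete. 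The remaining parts (regularity of all plinths via Lemma~\ref{solplinthsreg}, the identification $\ba{R}=C_{\sym{\Delta}}(\ba{L})$, the HS/HC conclusion via Lemma~\ref{lem:prim of type hs hc}, and the independence of $X$ via $(\ba{L}\ba{R})_\delta\cong(LR)_\omega=\spl{G}_\omega$) coincide with the paper's.
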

\begin{proof}
Let $L$ and $R$ be distinct plinths of $G$. Then both $L$ and $R$ are regular by Lemma~\ref{solplinthsreg}(i). Let $\ba{G}=G/L \cap R$. By Lemma~\ref{superplinth} we have that $\spl{G}=LR$. Suppose that $K$ is a plinth of $G$ and $U$ is a normal subgroup of $G$ such that $K < U \leqslant \spl{G}$. Then $U$ is non-regular, and so Lemma~\ref{superplinth} shows that $\spl{G} \leqslant U$, hence $U=\spl{G}$. Thus $\spl{G}/L$ and $\spl{G}/R$ are minimal normal subgroups of $G/L$ and $G/R$ respectively. Since $\spl{G}/L \cong L / L\cap R$ and $\spl{G}/R \cong R /L\cap R$ we have that $\ba{R}$ and $\ba{L}$  are minimal normal subgroups of $\ba{G}$. Since $ L \cap R$ is intransitive, Lemma~\ref{lem: quotient of sp is sp} shows that  $\ba{G}$ is semiprimitive. Now $\ba{G}$ has two distinct minimal normal subgroups which are transitive. Thus Lemma~\ref{lem:criterion for prim hs hc} shows that $\ba{G}$ is primitive with $\ba{L}\cong\ba{R}$. Set $X:=\ba{L}$, by Lemma~\ref{lem:criterion for prim hs hc} $X$ is characteristically simple. Note that  if $G$ is finite, then $\ba{G}$ is of type HS or HC and  $X \cong T^\ell$ for some integer $\ell$ and some finite non-abelian simple group $T$.

We now show that $X$ does not depend upon the choice of plinths. Let $K$ and $Y$ be distinct plinths of $G$ and set $\widetilde{G}= G/ K \cap Y $. Lemma~\ref{lem: quotient of sp is sp} shows that, for any $\alpha \in \Omega$, $(KY)_\alpha \cong \widetilde{KY}_{\alpha^{K \cap Y}}$. Now $KY=\spl{G}=LR$ so we have $(KY)_\alpha = (LR)_\alpha \cong (\overline{LR})_{\alpha^{L\cap R}}$. Lemma~\ref{lem:criterion for prim hs hc} shows that $X \cong (\overline{LR})_{\alpha^{L \cap R}}$, hence $\widetilde{KY}_{\alpha^{K \cap Y}} \cong X$ and thus Lemma~\ref{lem:criterion for prim hs hc} shows that  $\widetilde{K} \cong \widetilde{Y} \cong X$.
\end{proof}

\begin{cor}
\label{plinthfactors}
All plinths of a  finite semiprimitive group $G$ have the same multiset of composition factors.
\end{cor}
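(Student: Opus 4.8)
The plan is to reduce immediately to Theorem~\ref{thm:mult plinths} and then invoke the Jordan--Hölder theorem; the genuine content has already been supplied by that theorem, so the corollary is essentially a bookkeeping exercise with composition factors. First I would dispose of the trivial case: if $G$ has a unique plinth, there is nothing to prove, and this in particular covers the situation of Lemma~\ref{solplinthsreg}(i), where a non-regular plinth forces uniqueness. Hence I may assume that $G$ has at least two plinths, and (again by Lemma~\ref{solplinthsreg}(i)) all of them are regular.

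Since ``has the same multiset of composition factors'' is an equivalence relation on the set of plinths, it suffices to treat an arbitrary pair. So I fix two plinths $L$ and $R$ of $G$ and aim to show they share this multiset. The decisive input is Theorem~\ref{thm:mult plinths}, which furnishes a characteristically simple group $X$ with $L/(L\cap R)\cong X\cong R/(L\cap R)$. The key structural feature to exploit is that both quotients are formed modulo the \emph{same} normal subgroup $L\cap R$.

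I would then apply the Jordan--Hölder theorem to each of the two normal inclusions $L\cap R \trianglelefteq L$ and $L\cap R \trianglelefteq R$. This expresses the multiset of composition factors of $L$ as the multiset union of those of $L\cap R$ with those of $L/(L\cap R)\cong X$, and identically expresses the multiset for $R$ as the union of those of $L\cap R$ with those of $R/(L\cap R)\cong X$. As the two expressions coincide term for term, $L$ and $R$ have the same multiset of composition factors, and since the pair was arbitrary, the corollary follows.

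There is no real obstacle here: the entire weight of the argument---namely that $L/(L\cap R)$ and $R/(L\cap R)$ are isomorphic to a common characteristically simple group---has already been carried by Theorem~\ref{thm:mult plinths}. The only point that demands a moment's care is the observation that the intersection $L\cap R$ is common to both short exact sequences; it is precisely this shared bottom term that makes the Jordan--Hölder cancellation valid and that would fail if one tried to compare $L$ and $R$ through unrelated normal subgroups.
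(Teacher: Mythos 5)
Your proposal is correct and follows the same route as the paper: dispose of the unique-plinth case, then for any two distinct plinths $L$ and $R$ invoke Theorem~\ref{thm:mult plinths} to get $L/(L\cap R)\cong R/(L\cap R)$ and conclude via Jordan--H\"older over the common normal subgroup $L\cap R$. The paper compresses this into one sentence; your version merely makes the Jordan--H\"older cancellation and the reduction to pairs explicit.
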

\begin{proof}
Obviously there is nothing to prove if $G$ has a unique plinth,  whilst Theorem~\ref{thm:mult plinths} shows that for any two distinct plinths $L$ and $R$ we have  $L/L\cap  R\cong R/L\cap R$ and hence $L$ and $R$ have the same set of composition factors.
\end{proof}

Having a non-perfect plinth gives us control over centralisers and thus over certain ``useful'' subgroups of $G$. In the next lemma, the \emph{layer} of a finite group $G$, $\mathrm E(G)$, is the product of all subnormal quasisimple subgroups of $G$.

\begin{lem}
\label{lem:non-perfect plinth subgroup}
Suppose that $K$ is a plinth of $G$ with $[K,K] < K$.  Then $K$ is regular and $\cent{G}{K} \leqslant K$.  In particular, if $G$ is finite, then $\mathrm E({G}) \leqslant K$.
\end{lem}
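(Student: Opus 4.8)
The plan is to establish the three assertions in turn, with the regularity of $K$ and the containment $\cent{G}{K}\leqslant K$ carrying the content and the statement about $\mathrm E(G)$ following formally. First, regularity of $K$ is just the contrapositive of Lemma~\ref{solplinthsreg}(ii): a non-regular plinth of a semiprimitive group is perfect, so a plinth with $[K,K]<K$ cannot be non-regular. Hence $K$ is regular, which gives $K_\omega=1$, so $K\cap H=1$ where $H=G_\omega$, and since $K$ is transitive we obtain the semidirect decomposition $G=K\rtimes H$.

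To prove $\cent{G}{K}\leqslant K$, I would exploit that $Y:=[K,K]$ is a proper subgroup of $K$ (as $[K,K]<K$) that is characteristic in $K$ and hence normal in $G$. Thus $Y$ is a normal subgroup of $G$ properly contained in the plinth $K$, so Lemma~\ref{lem:sp iff h faithful on k/y} tells us that $H$ acts faithfully, by conjugation, on $K/[K,K]$. Now let $g\in\cent{G}{K}$ and write $g=kh$ with $k\in K$ and $h\in H$, using $G=K\rtimes H$. Evaluating the centralising condition $x^{g}=x$ for $x\in K$ shows that the automorphism $h\mu$ induced by $h$ equals conjugation by $k^{-1}$; in particular $h\mu$ is an inner automorphism of $K$ and so acts trivially on $K/[K,K]$. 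By faithfulness this forces $h=1$, whence $g=k\in K$, and $\cent{G}{K}\leqslant K$ follows.

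Finally, assume $G$ is finite. Here I would argue component by component: for each component (subnormal quasisimple subgroup) $L$ of $G$, the standard dichotomy of a component against the normal subgroup $K$ gives either $L\leqslant K$ or $L\leqslant\cent{G}{K}$. By the previous paragraph $\cent{G}{K}\leqslant K$, so in both cases $L\leqslant K$; since $\mathrm E(G)$ is generated by its components, $\mathrm E(G)\leqslant K$ results.

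I expect the middle step to be the main obstacle: correctly translating the condition $g=kh\in\cent{G}{K}$ into the statement that $h$ induces an inner automorphism of $K$, and then feeding this into the faithful action on $K/[K,K]$ supplied by Lemma~\ref{lem:sp iff h faithful on k/y}. The only external input is the component dichotomy used in the last paragraph, which is a standard property of the layer of a finite group, and everything else reduces to the semidirect-product bookkeeping of the second paragraph.
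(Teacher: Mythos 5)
Your proof is correct and follows essentially the same route as the paper: regularity via Lemma~\ref{solplinthsreg}(ii), then the faithfulness criterion of Lemma~\ref{lem:sp iff h faithful on k/y} applied to the quotient $K/[K,K]$ to kill the $H$-part of any centralising element (the paper phrases this with a normal subgroup $S$ of $H$ satisfying $SK=\cent{G}{K}K$ rather than element-by-element, but the mechanism is identical). Your component dichotomy in the last step is just an unwinding of the fact the paper cites from Isaacs, so nothing is missing.
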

\begin{proof}
That $K$ is regular is  a consequence of Lemma~\ref{solplinthsreg}(ii).  Assume for a contradiction that $\cent{G}{K}K >K$. Then there is a normal subgroup $1\neq S$ of $H$ such that $SK=K\cent{G}{K}$, whence 
$$[S,K] \leqslant [SK,K] = [K\cent{G}{K},K]= [K,K] < K.$$
 Now $S$ acts trivially on $K/[S,K]$, so $K/[S,K]$ is a  $H$-invariant quotient of $K$ on which $H$ does not act faithfully, a contradiction to Lemma~\ref{lem:sp iff h faithful on k/y}. Thus $K$ indeed contains its centraliser. The final part of the lemma follows since for every finite group $X$, $\mathrm E({X}) \leqslant U$ for every normal subgroup $U$ of $X$ such that $\cent{X}{U} \leqslant U$, see \cite[9.A.6]{Isaacs}.
\end{proof}

\begin{thm}
Suppose that $K$ is a soluble plinth. Then the following hold:
\begin{enumerate}[(i)]
\item  $K$ is regular and $K=\mathrm{sp}(G)$;
\item every semiregular normal subgroup of $G$ is contained in $K$;
\item every transitive normal subgroup of $G$ contains $K$.
\end{enumerate}
\end{thm}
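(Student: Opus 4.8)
The plan is to settle part (i) first, and then to isolate a single mechanism coming from semiprimitivity and feed it into the proofs of (iii) and (ii).

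For (i), I would argue as follows. Since $K$ is soluble we have $[K,K]<K$, so $K$ is a non-perfect plinth and Lemma~\ref{lem:non-perfect plinth subgroup} immediately yields that $K$ is regular (and, as a by-product, that $C_G(K)\leqslant K$). To see that $K$ is the \emph{unique} plinth, suppose for contradiction that $L$ is a second plinth. Then $G$ has at least two plinths, so Theorem~\ref{thm:mult plinths}, together with Lemma~\ref{lem:criterion for prim hs hc}, furnishes a \emph{non-abelian} characteristically simple group $X$ with $X\cong K/(K\cap L)$. But $K/(K\cap L)$ is a quotient of the soluble group $K$, hence soluble, and a soluble characteristically simple group is abelian (its derived subgroup is a proper characteristic subgroup, so must be trivial). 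This contradiction shows that $K$ is the only plinth, and therefore $K=\spl{G}$.

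The mechanism I would extract is this: whenever $Y$ is a normal subgroup of $G$ with $Y<K$, semiprimitivity forces (via Lemma~\ref{lem:sp iff h faithful on k/y}) that $H=G_\omega$ acts faithfully on $K/Y$, so no non-trivial normal subgroup of $H$ can centralise $K/Y$. To prove (iii), let $T$ be a transitive normal subgroup and suppose $K\not\leqslant T$; since $T\cap K$ is normal in $G$ and contained in $K$, this means $Y:=T\cap K<K$. As $[K,T]\leqslant K\cap T=Y$, the subgroup $T_\omega=T\cap H$, which is normal in $H$ (because $H$ normalises $T$ and fixes $\omega$), centralises $K/Y$; the mechanism then forces $T_\omega=1$. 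Hence $T$ is regular, so $T$ is a plinth, and by (i) it equals $K$, contradicting $T\cap K<K$. Thus $K\leqslant T$.

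For (ii), let $M$ be a semiregular normal subgroup. If $K\leqslant M$ then $M$ is transitive and semiregular, hence regular, so $M=K$ because $K$ is regular. Otherwise $M\cap K<K$, and here solubility enters: $K/(M\cap K)$ is a non-trivial, hence imperfect, soluble quotient, so $Y:=(M\cap K)[K,K]$ is a proper normal subgroup of $G$ contained in $K$. Using the commutator identity $[K,MK]=[K,M][K,K]\leqslant(M\cap K)[K,K]=Y$ (valid as $M$ and $K$ are normal in $G$, with $[K,M]\leqslant M\cap K$), the normal subgroup $P:=MK\cap H$ of $H$ centralises $K/Y$, so the mechanism forces $P=1$. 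Then $MK$ is transitive with trivial point-stabiliser, hence regular, giving $MK=K$ and so $M\leqslant K$.

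The main obstacle I anticipate is the commutator bookkeeping in (ii). Unlike (iii), where $[K,T]$ already lands inside the proper subgroup $T\cap K$, the commutator $[K,MK]$ carries an extra $[K,K]$ term, so one must enlarge $M\cap K$ to $(M\cap K)[K,K]$ and check that this is still proper in $K$ — which is precisely the point at which the solubility (equivalently, imperfection) of $K$ is indispensable. The only other step needing care is the uniqueness argument in (i), which rests on the non-abelianness of the group $X$ supplied by Lemma~\ref{lem:criterion for prim hs hc}.
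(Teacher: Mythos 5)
Your proposal is correct and follows essentially the same route as the paper: part (i) via Theorem~\ref{thm:mult plinths} (non-abelian $X$ versus solubility) together with Lemma~\ref{solplinthsreg}/Lemma~\ref{lem:non-perfect plinth subgroup} for regularity, and part (ii) by forming the proper $G$-normal subgroup $(M\cap K)[K,K]$ and applying the faithfulness criterion of Lemma~\ref{lem:sp iff h faithful on k/y}, exactly as in the paper. The only cosmetic difference is in (iii), where the paper simply cites Lemma~\ref{superplinth} for the non-regular case while you inline that lemma's one-line commutator argument; both reduce the regular case to part (i).
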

\begin{proof}
Suppose that $G$ is a semiprimitive group with soluble plinth $K$. Suppose that   $L$ is also a plinth of $G$. If $L\neq K$, then  Theorem \ref{thm:mult plinths} shows there is a   non-abelian characteristically simple group $X$ such that $K/K\cap L \cong X$,  a contradiction to $K$ being soluble. Hence $K=L$ and so $K=\mathrm{sp}(G)$. Lemma~\ref{solplinthsreg} shows that $K$ is regular and so (i) holds.

For (ii), suppose  that $R$ is a normal semiregular subgroup of $G$. Assume that $R \nleqslant K$. Then $[R,K] \leqslant R \cap K < K$. Since $K$ is soluble, $K/K \cap R$ is soluble, and so $[K,K](K \cap R)$ is a proper subgroup of $K$. Now $KR = KS$ for some normal subgroup $S$ of $H$. Then 
$$[S,K] \leqslant [SK,K] =       [KR,K] = [K,R] [K,K] < K,$$
a contradiction to Lemma~\ref{lem:sp iff h faithful on k/y}. Hence $R \leqslant K$ as required. 

Let $N$ be a transitive normal subgroup of $G$. If $N$ is non-regular, then $N$ contains $K$ by Lemma~\ref{superplinth}. Otherwise, $N$ is regular, and is therefore a plinth, and therefore $N=K$ by part  (i).
\end{proof}

The following example (suggested to us by Cai Heng Li) shows that solubility of the plinth is necessary in the above result.

\begin{eg}
\label{eg:nonisoplinth}
Let $T$ be a non-abelian finite simple group and let $V$ be a faithful irreducible module for $T$ over some finite field. Let
$$ X = \{(v,a,b) \mid v \in V, a,b \in T \}\cong (V\times T)\rtimes T$$
with multiplication defined by $(vab)(wcd)=vw^{b^{-1}}ac^{b^{-1}}bd$.
Identify $V$ with the subgroup $\{(v,1,1) \mid v\in V \}$ of $X$ and let $R=\{(1,a,1) \mid a \in T\} \cong T$. Set
 $$K = VR,\ L  = \{ (v,a,a^{-1}) \mid v\in V,a\in T\},\ H=\{(1,1,a) \mid a\in T\}.$$
Then $X$ acts faithfully on the set of cosets of $H$. Both $L$ and $K$ are regular subgroups in this action. The non-trivial proper normal subgroups of $X$ are simply $V$, $R$, $K$ and $L$, and since $V$ and $R$ are semiregular, $X$ is semiprimitive. Note that $K$ is non-perfect whilst $L$ is perfect. Both $L$ and $K$ are insoluble.

It is worth noting that $X$ has two quotient actions via normal subgroups, $X/V$ is a primitive group of type HS and $X/R$ is a primitive group of type HA.
\end{eg}

\section{Construction of semiprimitive groups via triples}
\label{sec:sptriples}

One of the main ideas in \cite{bambergpraeger} is an encoding of the building blocks of an innately transitive group into a data set  called an \emph{innate triple}.

\begin{defn}
\label{defn:intrips}
A triple $(K,H,\phi)$ satisfying the following three conditions is called an \emph{innate triple}:
\begin{enumerate}
\item $K \cong T^k$ with $T$ a finite simple (possibly abelian) group.
\item $\phi$ is an epimorphism with domain a subgroup $K_0$ of $K$ such that $\ker(\phi)$ is core-free in $K$ and if $K$ is abelian then $K_0 = K$;
\item $H$ is a subgroup of $\mathrm{Aut}(K)$ such that $K$ is $H$-simple, $\ker(\phi)$ is $H$-invariant and $H \cap \mathrm{Inn}(K) = \mathrm{Inn}_{K_0}(K):=\{c_x \mid x \in K_0\}$.
\end{enumerate}
\end{defn}

A rigorous method was developed in \cite{bambergpraeger} to show that all innately transitive groups arise from an innate triple via a construction. Here we show that there is an  appropriate  generalisation of both the notion of innate triples and  the construction to the  case of semiprimitive groups with a plinth, although our treatment is somewhat simplified compared to that of \cite{bambergpraeger}.

\begin{defn}
\label{defn:trips}
A triple $(K,H,L)$ satisfying the following conditions is called a \emph{semiprimitive triple}:
\begin{enumerate}
\item $K$ is a group and  $H$ is a group of automorphisms of $K$ such that $H$ acts faithfully on each non-trivial $H$-invariant quotient of $K$;
\item  $L$ is a normal subgroup of $K_0 :=\{ k\in K \mid   c_k \in H   \},$
$L$ is core-free in $K$, $L$ is normalised by $H$ and, if $L\neq 1$, then $K=[K,K]$;
\item $K \neq LR$ for any proper normal subgroup $R$ of $K$ that is $H$-invariant.
\end{enumerate}
\end{defn}

 Note that $H$ normalises $K_0$ and that $K_0$ contains $Z(K)$.

Naturally, semiprimitive groups give rise to semiprimitive triples.

\begin{lem}
\label{lem:triple from sp gp is sp trip}
Let $G \leqslant \mathrm{Sym}(\Omega)$ be a semiprimitive group with plinth $K$ and let $\omega \in \Omega$. Let $\mu : G \rightarrow \mathrm{Aut}(K)$ be the natural map induced by the conjugation action of $G$ on $K$. Then $G_\omega \cong  G_\omega \mu$ and $(K, G_\omega \mu,K_\omega)$ is a semiprimitive triple.
\end{lem}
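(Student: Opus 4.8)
The plan is to prove the two assertions separately, and for the triple to check the three conditions of Definition~\ref{defn:trips} in turn, leaning on the semiprimitivity criterion of Lemma~\ref{lem:sp iff h faithful on k/y} and the centraliser analysis of Lemma~\ref{lem:cent lemma}. Throughout I would write $H = G_\omega\mu$ and $L = K_\omega$, so that the triple under scrutiny is $(K,H,L)$. First I would dispose of the isomorphism $G_\omega \cong G_\omega\mu$: the kernel of $\mu$ is $C_G(K)$, so the kernel of $\mu|_{G_\omega}$ is $G_\omega \cap C_G(K) = C_G(K)_\omega$, and since $K$ is transitive, $C_G(K)$ is semiregular by Lemma~\ref{lem:centsemireg}, whence $C_G(K)_\omega = 1$ and $\mu|_{G_\omega}$ is injective. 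The single observation driving everything else is that, as $K$ is transitive, $G = KG_\omega$; consequently a normal subgroup $Y$ of $K$ is $H$-invariant (that is, normalised by $G_\omega$ under conjugation) if and only if it is normalised by $K$ and $G_\omega$, hence by $G=KG_\omega$, i.e.\ if and only if it is a normal subgroup of $G$ contained in $K$. Thus the $H$-invariant quotients of $K$ are exactly the quotients $K/Y$ with $Y \trianglelefteq G$ and $Y \leqslant K$.

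For condition (1) I must show $H$ acts faithfully on each non-trivial $H$-invariant quotient $K/Y$. By the identification above, such a $Y$ is a normal subgroup of $G$ with $Y < K$, and Lemma~\ref{lem:sp iff h faithful on k/y} says precisely that the point-stabiliser $G_\omega$ acts faithfully on $K/Y$. Since the conjugation action of $G_\omega$ on $K/Y$ factors through $\mu$ and $\mu|_{G_\omega}$ is injective, this faithfulness passes to $H = G_\omega\mu$; this is the one place where the first assertion is genuinely used.

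For condition (2), Lemma~\ref{lem:cent lemma}(b) identifies $K_0 = \{k \in K \mid c_k \in H\}$ with $K_\sigma$, where $\sigma = \omega^{C_G(K)}$, and then Lemma~\ref{lem:cent lemma}(a) exhibits $L = K_\omega$ as the kernel of the homomorphism $\phi\colon K_\sigma \to C_G(K)$, so $L \trianglelefteq K_0$. That $L$ is core-free in $K$ is just faithfulness of the transitive action of $K$ on $\Omega$ (the core of $K_\omega$ is the kernel of that action); that $L$ is normalised by $H$ holds because $G_\omega$ normalises $K$ and itself, hence normalises $L = K \cap G_\omega$, which under $\mu$ says $H$ normalises $L$; and if $L = K_\omega \neq 1$ then $K$ is non-regular, so $K$ is perfect by Lemma~\ref{solplinthsreg}(ii), giving $K = [K,K]$.

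Condition (3) I would prove by contradiction with an orbit count. Suppose $K = LR = K_\omega R$ for some proper $H$-invariant normal subgroup $R$ of $K$. Writing each element of $K$ as $ur$ with $u \in K_\omega$ and $r \in R$ and applying it to $\omega$ gives $\omega^K = \omega^R$; transitivity of $K$ then forces $R$ to be transitive. But $R$ is $H$-invariant and normal in $K$, hence a normal subgroup of $G$ properly contained in $K$, and a transitive such subgroup contradicts the minimality of the plinth $K$ (Definition~\ref{defn:plinths}). So $K \neq LR$. The main obstacle here is bookkeeping rather than depth: one must keep three distinct ``normalities'' straight --- normal in $K$, $H$-invariant, and normal in $G$ --- and the crux is the elementary but essential observation $G = KG_\omega$, which collapses the first two (given normality in $K$) onto the third and thereby licenses the use of both Lemma~\ref{lem:sp iff h faithful on k/y} and plinth-minimality.
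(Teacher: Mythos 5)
Your proof is correct and follows essentially the same route as the paper's: the same injectivity argument via semiregularity of $C_G(K)$, the same appeal to Lemma~\ref{lem:sp iff h faithful on k/y} for condition (1), to Lemma~\ref{lem:cent lemma} and Lemma~\ref{solplinthsreg} for condition (2), and the same plinth-minimality argument for condition (3). Your explicit remark that $G=KG_\omega$ identifies $H$-invariant normal subgroups of $K$ with normal subgroups of $G$ contained in $K$ is a detail the paper leaves implicit, but the argument is otherwise identical.
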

\begin{proof}
First note that $\ker(\mu) = C_G(K)$ and since $K$ is transitive, $C_G(K)$ is semiregular, hence $G_\omega \cap C_G(K) = 1$ so that $G_\omega$ is indeed isomorphic to $G_\omega\mu$. Since $K$ is a plinth, Lemma~\ref{lem:sp iff h faithful on k/y} shows that $G_\omega\mu$ acts faithfully on each non-trivial $G_\omega$-invariant quotient of $K$. Hence the triple $(K,G_\omega\mu,K_\omega )$ satisfies Definition~\ref{defn:trips}(1).

Note that $K_\omega$ is core-free in $K$ since $K$ acts faithfully on $\Omega$. Also since $K_\omega = K \cap G_\omega$ and $K$ is normal in $G$, we have that $K_\omega$ is $G_\omega \mu$-invariant. Let $\sigma = \omega^{C_G(K)}$. Lemma~\ref{lem:cent lemma}(a) shows that $K_\omega$ is a normal subgroup of $K_\sigma$ and 
part (b) of that lemma shows that $K_\sigma = K_0$.
 If $K_\omega \neq 1$, then $K$ is not regular, and so Lemma~\ref{solplinthsreg} shows that $K=[K,K]$. Thus part (2) of Definition~\ref{defn:trips} holds.

Suppose that $R$ is a normal subgroup of $K$ that is $G_\omega\mu$ invariant. Then $R$ is normalised by $KG_\omega=G$. Since $K$ is a plinth, we have that $R$ is intransitive, hence $K \neq RK_\omega$. Thus part (3) of Definition~\ref{defn:trips} holds and $(K,G_\omega\mu,K_\omega)$ is a semiprimitive triple as required.
\end{proof}

Below we detail a construction that takes as input a semiprimitive triple $(K,H,L)$ and produces a semiprimitive group. The group constructed will feature $K$ as a plinth and $H$  as a point-stabiliser. Condition (1)  guarantees that the group produced from a semiprimitive triple will be semiprimitive. Condition (2) encodes the point-stabiliser and   centraliser of a plinth and ensures that a plinth will act faithfully.  Condition (3)  guarantees that the group $K$ will be a plinth in the permutation group constructed.

\begin{construction}
\label{con:sptriples}
Let  $(K,H,L)$ be  a semiprimitive triple. We set 
$$X =   K \rtimes H$$
(with the action of $H$ on $K$ as automorphisms).
For convenience, we identify   $K$, $L$ and $H$ with their images in $X$. 
We write the elements of $X$ as tuples $( h, k)$ with   $k\in K$ and $h\in H$, with multiplication as below
$$(h,k)(h',k') = (hh',k^{h'}k').$$
Let 
$Y = HL$. We set 
$$ \Omega(K,H,L) = [X :Y ],$$
the set of right cosets of $Y$ in $X$,  and let $X$ act on $\Omega(K,H,L)$ by right multiplication. Set
$Z_0 = \{ (c_{u^{-1}},u) : u \in K_0 \}$, $Z=\{(c_{u^{-1}},u) \mid u \in L \}$ and let
$$\mathcal G(K,H,L) = X/Z.$$
\end{construction}

\begin{lem}
\label{lem:cent of K in X}
The centraliser of $K$ in $X$ is $Z_0$.
\end{lem}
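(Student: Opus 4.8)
The plan is to work by a direct computation inside the semidirect product $X = K \rtimes H$, using the identification of $K$ with the subgroup $\{(1,k) : k \in K\}$ of $X$. First I would take an arbitrary element $(h,k)\in X$, so $h\in H$ and $k\in K$, and write down the two products $(h,k)(1,j)$ and $(1,j)(h,k)$ for a general $j\in K$, using the multiplication rule $(a,b)(a',b') = (aa',\,b^{a'}b')$. These evaluate to $(h,kj)$ and $(h,j^h k)$ respectively, so $(h,k)$ commutes with $(1,j)$ precisely when $kj = j^h k$, that is, when $j^h = kjk^{-1}$.

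The key observation is that the requirement $j^h = kjk^{-1}$ for \emph{all} $j\in K$ says exactly that the automorphism $h$ of $K$ coincides with conjugation by $k^{-1}$, i.e.\ $h = c_{k^{-1}}$ (recall $c_x : j\mapsto x^{-1}jx$). Hence $(h,k)$ centralises the whole of $K$ if and only if $h = c_{k^{-1}}$. Since the first coordinate of any element of $X$ must lie in $H$, this forces $c_{k^{-1}}\in H$; as $H$ is a group and $c_{k^{-1}} = c_k^{-1}$, this is equivalent to $c_k\in H$, which by the definition $K_0 = \{k\in K : c_k\in H\}$ is exactly the condition $k\in K_0$. For the reverse inclusion, given any $u\in K_0$ we have $c_u\in H$, hence $c_{u^{-1}} = c_u^{-1}\in H$, so $(c_{u^{-1}},u)$ genuinely lies in $X$, and reversing the computation of the first paragraph shows it centralises every $(1,j)$ and therefore all of $K$. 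Combining the two directions yields $C_X(K) = \{(c_{u^{-1}},u) : u\in K_0\} = Z_0$.

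I do not expect any genuine obstacle here beyond bookkeeping. The only two points requiring care are the noncommutativity of the semidirect-product multiplication (one must keep the twist $j^h$ on the correct factor) and the convention that $c_x$ denotes the right-action conjugation $j\mapsto x^{-1}jx$, which is precisely what makes the centralising element take the form $(c_{k^{-1}},k)$ rather than $(c_k,k)$ and thereby match the definition of $Z_0$.
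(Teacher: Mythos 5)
Your proof is correct and follows essentially the same route as the paper: a direct computation in the semidirect product showing that $(h,k)$ centralises $K$ if and only if $h=c_{k^{-1}}$, which forces $c_k\in H$ and hence $k\in K_0$. The paper phrases the computation via conjugation, $(1,j)^{(a,b)}=(1,(j^a)^b)$, rather than comparing the two products, but this is the same argument with the same bookkeeping on the conjugation convention.
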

\begin{proof}
It is easy to check that $Z_0 \leqslant C_X(K)$. Let $(1,k) \in K$ and  $(a,b) \in X$ be   arbitrary. Then $(1,k)^{(a,b) } = (1, (k^a)^b)$. Hence $(a,b) \in C_X(K)$ if and only if $k^{b^{-1}} = k^a$ for all $k\in K$. This gives $b \in K_0$ and $a=c_{b^{-1}}$, so that $(a,b) = (c_{b^{-1}},b) \in Z_0$. Thus $Z_0 = C_X(K)$.
\end{proof}

\begin{lem}
The kernel of the action of $X$ on $ \Omega(K,H,L)$ is $Z$.
\end{lem}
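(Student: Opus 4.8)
The plan is to recognise the kernel of the right-multiplication action of $X$ on $\Omega(K,H,L) = [X:Y]$ as the normal core $\mathrm{Core}_X(Y) = \bigcap_{x \in X} x^{-1}Yx$, the largest normal subgroup of $X$ contained in $Y = HL$. Writing $W$ for this core, I would then prove the equality $W = Z$ by establishing the two inclusions $Z \leqslant W$ and $W \leqslant Z$ separately.

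For $Z \leqslant W$, it suffices to show that $Z$ is a normal subgroup of $X$ contained in $Y$. Containment is immediate: for $u \in L \leqslant K_0$ we have $c_u \in H$, hence $c_{u^{-1}} \in H$, so that $(c_{u^{-1}}, u) \in HL = Y$. For normality, I would first note that $Z \leqslant Z_0 = C_X(K)$ by Lemma~\ref{lem:cent of K in X}, so that $K$ normalises $Z$; a direct conjugation computation then gives $(c_{u^{-1}}, u)^{(h,1)} = (c_{(u^h)^{-1}}, u^h)$, which lies in $Z$ because $L$ is $H$-invariant by Definition~\ref{defn:trips}(2). Hence $X = KH$ normalises $Z$, and since $Z \leqslant Y$ we conclude $Z \leqslant W$.

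For the reverse inclusion, the pivotal first step is to show $W \cap K = 1$. As $W$ and $K$ are both normal in $X$, so is $W \cap K$; in particular $W \cap K$ is $H$-invariant and normal in $K$. Since $W \cap K \leqslant Y \cap K = L$, it lies in the normal core of $L$ in $K$, which is trivial because $L$ is core-free by Definition~\ref{defn:trips}(2); thus $W \cap K = 1$. I would then exploit this via the commutator containment $[W, K] \leqslant W \cap K = 1$, which forces $W \leqslant C_X(K) = Z_0 = \{(c_{u^{-1}}, u) : u \in K_0\}$, again by Lemma~\ref{lem:cent of K in X}. Intersecting with $Y$ and observing that an element $(c_{u^{-1}}, u)$ of $Z_0$ lies in $HL$ precisely when $u \in L$, I obtain $W \leqslant Z_0 \cap Y = Z$, which completes the argument.

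I expect the main obstacle to be the reverse inclusion, and within it the identification $W \cap K = 1$: this is exactly where core-freeness of $L$ (Definition~\ref{defn:trips}(2)) does the essential structural work. Once $W \cap K = 1$ is in hand, the commutator trick $[W,K] = 1$ collapses $W$ into the already-computed centraliser $Z_0$, and the remainder is a routine intersection; the forward inclusion is likewise only a matter of verifying normality through the two conjugation calculations.
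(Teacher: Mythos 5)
Your proof is correct and follows essentially the same route as the paper: show $Z$ is normal in $X$ and contained in $Y$, then use core-freeness of $L$ to get $\mathrm{core}_X(Y)\cap K=1$, deduce that the core centralises $K$, and pin it down inside $Y$. The only cosmetic difference is at the last step, where you intersect $Z_0=C_X(K)$ with $Y$ directly, while the paper writes $Y=HZ$ and uses $C_H(K)=1$; both are valid.
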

\begin{proof}
The kernel of the action of $X$ on $ \Omega(K,H,L)$ is $\mathrm{core}_X(Y)$. Note that $Z \leqslant Y$ and that $Z$ centralises $K$ since $Z \leqslant Z_0$. Moreover $H$ normalises $L$ so $H$ normalises $Z$. Thus $ Z \leqslant \mathrm{core}_X(Y)$. 

We observe that $K \cap Y = L$. Hence $ K \cap \mathrm{core}_X(Y)$ is a normal subgroup of $K$, contained in $L$. By Definition~\ref{defn:trips}(2) $L$ is core-free in $K$, hence $K \cap \mathrm{core}_X(Y)=1$. In particular, $\mathrm{core}_X(Y)$ centralises $K$. Now note that $Y = HL = HZ$ and so $\mathrm{core}_X(Y) = Z(\mathrm{core}_X(Y) \cap H)$. Thus $\mathrm{core}_X(Y) \cap H \leqslant C_H(K) = 1$, since Definition~\ref{defn:trips}(1) states that $H$ is a group of automorphisms of $K$. Hence $\mathrm{core}_X(Y) = Z$.
\end{proof}

\begin{lem}
The group $\mathcal G(K,H,L)$ is semiprimitive on the set $ \Omega(K,H,L)$ with point-stabiliser $HZ/Z \cong H$ and plinth $KZ/Z \cong K$. Moreover $KZ/Z \cap HZ/Z = L Z / Z \cong L$.
\end{lem}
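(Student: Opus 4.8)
The plan is to read off the faithful permutation action of $\mathcal{G}(K,H,L) = X/Z$ on $\Omega(K,H,L) = [X:Y]$ from the previous lemma, which identifies $Z = \mathrm{core}_X(Y)$ as the kernel of the $X$-action, and then to verify each assertion by pulling back to computations inside $X = K \rtimes H$. First I would record the elementary facts $H \cap Z = 1$, $K \cap Z = 1$, and $Y = HL = HZ$ with $K \cap Y = L$; the first is immediate from the coordinate description of $Z$, the second holds because $K \cap Z = \{(1,u) : u \in L \cap Z(K)\}$ and $L \cap Z(K)$ is a normal subgroup of $K$ contained in the core-free subgroup $L$, hence trivial, while the last two were already noted in the proof of the kernel lemma. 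Since cosets of $HZ/Z$ in $X/Z$ correspond to cosets of $Y$ in $X$, the point-stabiliser of the base coset is $Y/Z = HZ/Z \cong H$, and $\bar K := KZ/Z$ is a normal subgroup of $\mathcal{G}$ isomorphic to $K$; moreover $\bar K$ is transitive because $KY = K(HL) = X$.

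For semiprimitivity I would invoke Lemma~\ref{lem:g is semi prim}, applied to the factorisation $\mathcal{G} = \bar K \, \bar H$ with $\bar K$ normal and $\bar H = HZ/Z$ core-free (the latter because $\bar H$ is a point-stabiliser of the faithful action). The hypothesis to check is that $[\bar K, \bar N] = \bar K$ for every non-trivial normal subgroup $\bar N$ of $\bar H$. A direct computation shows conjugation by $(h,1)$ sends $(1,k)$ to $(1,k^h)$, so under the isomorphisms $\bar H \cong H$ and $\bar K \cong K$ the conjugation action of $\bar H$ on $\bar K$ is identified with the action of $H$ on $K$; thus $\bar N$ corresponds to a non-trivial normal subgroup $N$ of $H$ and $[\bar K, \bar N] = \bar K$ becomes $[K,N] = K$. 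This holds for all non-trivial $N \trianglelefteq H$ precisely by Definition~\ref{defn:trips}(1), using the equivalence recorded before Lemma~\ref{lem:sp iff h faithful on k/y}. Hence $\mathcal{G}$ is semiprimitive.

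The substantive step is showing $\bar K$ is a plinth, that is, minimally transitive. Here I would set up a dictionary: a normal subgroup $\bar R$ of $\mathcal{G}$ contained in $\bar K$ has preimage $RZ$, where $R$ is the intersection with $K$ of the preimage of $\bar R$ and is an $H$-invariant normal subgroup of $K$ (the equality of preimage and $RZ$ uses $KZ = K \times Z$, valid since $Z$ centralises $K$). Then $\bar R$ is transitive if and only if $RZ\cdot Y = X$, and expanding $RZ\cdot Y = R(HL)$ in coordinates gives that its image in $X/K \cong H$ is all of $H$, while $R(HL) \cap K = RL$; hence $\bar R$ is transitive exactly when $RL = K$. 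Definition~\ref{defn:trips}(3) forbids $LR = K$ for any proper $H$-invariant normal $R$, so $R = K$ and $\bar R = \bar K$. This is the point where condition (3) of a semiprimitive triple is essential, and the coordinate bookkeeping establishing $R(HL) \cap K = RL$ is the main obstacle.

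Finally, for the ``Moreover'' claim I would compute the stabiliser in $\bar K$ of the base point, namely $\bar K \cap \bar H = (KZ \cap HZ)/Z$. Using $HZ = HL$ together with the coordinate forms $KZ = \{(c_{u^{-1}},k) : u \in L,\ k \in K\}$ and $LZ = \{(c_{u^{-1}},m) : u,m \in L\}$, one checks that $KZ \cap HL = LZ$, whence $\bar K \cap \bar H = LZ/Z$; and $LZ/Z \cong L/(L\cap Z) \cong L$, since $L \cap Z = 1$ by the same core-free argument used above. This completes the proof.
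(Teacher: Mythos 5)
Your proposal is correct and follows essentially the same route as the paper's proof: transitivity of $KZ/Z$ from $X=KY$, semiprimitivity via Lemma~\ref{lem:g is semi prim} together with Definition~\ref{defn:trips}(1), minimal transitivity of the plinth from Definition~\ref{defn:trips}(3) via the correspondence between normal subgroups of $KZ/Z$ and $H$-invariant normal subgroups of $K$, and the identification $KZ\cap Y = LZ$ with $L\cap Z=1$ for the final claim. The only differences are cosmetic: where you compute $KZ\cap HL$ and $R(HL)\cap K$ in explicit coordinates, the paper uses the modular law ($KZ\cap Y = Z(K\cap Y)$) and the second isomorphism theorem.
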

\begin{proof}
First we note that $K \cong KZ/Z$, $Y = HZ$ and $H \cong HZ/Z$. Since $X=KY$, it follows that  $KZ/Z$ is transitive on $ \Omega(K,H,L)$ and since $Z$ centralises $K$, the actions of $H$ on $K$ and of $HZ/Z$ on $KZ/Z$ are the same. Since $HZ/Z = Y/Z$ is core-free in $X/Z$ (by definition of $X/Z$), Definition~\ref{defn:trips}(1) shows that the hypothesis of   Lemma~\ref{lem:g is semi prim} (with $X/Z$ in place of $G$, $KZ/Z$ in place of  $K$  and $Y/Z$ in place of $H$) is satisfied. Hence  $X/Z$ is semiprimitive on $ \Omega(K,H,L)$. 

We have that $KZ \cap Y = Z(K \cap Y) = ZL$ and $L \cap Z    \leqslant \mathrm{core}_K(L)=1$, so that $ZL/Z \cong L$.

Suppose that $R$ is a normal subgroup of $X/Z$ properly contained in $KZ/Z$. Then (by the second isomorphism theorem)  there is an $H$-invariant normal subgroup $R_0$  of $K$ such that $R_0Z/Z = R$. Now suppose that $R$ is transitive  on $ \Omega(K,H,L)$. Then $R(HZ/Z) = X/Z$. In particular, we have $KZ/Z = R(KZ/Z \cap HZ/Z) = R(LZ/Z)$. The second isomorphism theorem implies that $R_0L=K$, a contradiction to Definition~\ref{defn:trips}(3). Hence  $R$ is intransitive, and so $KZ/Z$ is a plinth of $X/Z$. 
\end{proof}

\begin{lem}
The centraliser of $KZ/Z$ in $X/Z$ is $Z_0/Z \cong K_0/L$.
\end{lem}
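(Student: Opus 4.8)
The plan is to compute the centraliser by passing to the full preimage in $X$. Writing $\ba{X}=X/Z$, $\ba{K}=KZ/Z$ and noting $Z\leqslant Z_0$ so that $Z_0Z/Z=Z_0/Z$, the preimage in $X$ of $C_{\ba{X}}(\ba{K})$ is the subgroup
$$W = \{ g \in X \mid [g,k] \in Z \text{ for all } k \in K \},$$
since $[gZ,kZ]=[g,k]Z$. The goal is to prove $W=Z_0$, from which $C_{\ba{X}}(\ba{K})=W/Z=Z_0/Z$ follows immediately. One inclusion is easy: by Lemma~\ref{lem:cent of K in X} we have $Z_0=C_X(K)$, so each $g\in Z_0$ satisfies $[g,k]=1\in Z$, giving $Z_0\leqslant W$.

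For the reverse inclusion the key point is that no commutator ever needs to be expanded. Since $K$ is normal in $X$, for any $g\in X$ and $k\in K$ we have $[g,k]=(g^{-1}k^{-1}g)k\in K$; hence if $g\in W$ then $[g,k]\in K\cap Z$ for every $k$. The crux of the argument is therefore to establish that $K\cap Z=1$. An element $(c_{u^{-1}},u)\in Z$ (so $u\in L$) lies in $K=\{(1,k)\}$ precisely when $c_{u^{-1}}=1$, that is, when $u\in\mathrm{Z}(K)$; thus $K\cap Z$ is identified with $L\cap\mathrm{Z}(K)$. But $L\cap\mathrm{Z}(K)$ is fixed pointwise under conjugation by $K$, hence is a normal subgroup of $K$ contained in $L$, so it lies in $\mathrm{core}_K(L)=1$ by the core-freeness of $L$ (Definition~\ref{defn:trips}(2)). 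Therefore $K\cap Z=1$, which forces $[g,k]=1$ for all $k\in K$, and so $g\in C_X(K)=Z_0$. This proves $W=Z_0$, as required. I expect this verification that $K\cap Z=1$ — the one spot where core-freeness genuinely enters — to be the main obstacle; everything else is formal.

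Finally, to identify the quotient I plan to exhibit an explicit isomorphism rather than rely on the second-coordinate projection, which is only an anti-homomorphism. The map $K_0\to Z_0$ given by $u\mapsto(c_u,u^{-1})$ is a genuine group isomorphism: a short check using the multiplication $(h,k)(h',k')=(hh',k^{h'}k')$ together with $c_ac_b=c_{ab}$ confirms that it respects products, and it plainly carries $L$ onto $Z$. Since $L$ is normal in $K_0$, this isomorphism sends $Z$ to a normal subgroup of $Z_0$ and induces $Z_0/Z\cong K_0/L$, completing the proof. The one subtlety worth flagging here is keeping careful track of homomorphism versus anti-homomorphism, which is precisely why the map $u\mapsto(c_u,u^{-1})$ (and not $u\mapsto(c_{u^{-1}},u)$ read as a map out of $K_0$) is the right one to use.
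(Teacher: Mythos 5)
Your proof is correct and follows essentially the same route as the paper's: both reduce the computation to $C_X(K)=Z_0$ (via Lemma~\ref{lem:cent of K in X}) by observing that the quotient map is injective on $K$, that is, $K\cap Z=1$, so that $C_{X/Z}(KZ/Z)=C_X(K)Z/Z$. You merely make explicit two points the paper leaves to the reader --- the verification of $K\cap Z=1$ from core-freeness of $L$, and the choice of $u\mapsto(c_u,u^{-1})$ as a genuine (rather than anti-) isomorphism $K_0\to Z_0$ carrying $L$ onto $Z$ --- and both of these checks are accurate.
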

\begin{proof}
Since the natural quotient map restricts to an isomorphism between $K$ and $KZ/Z$, we have that $C_{X/Z}(KZ/Z) = C_X(K)Z/Z$. By Lemma~\ref{lem:cent of K in X} we have $C_X(K) = Z_0$. Hence $Z_0/Z = C_{X/Z}(KZ/Z)$. It is clear that $Z_0/Z$ is isomorphic to $K_0/L$ from the definition of $Z_0$ and $Z$.
\end{proof}
 
 We now show that there is an  equivalence between semiprimitive triples and semiprimitive groups with a plinth.
 
 \begin{thm}
Let $G \leqslant \mathrm{Sym}(\Omega)$ be a semiprimitive group with plinth $K$ and let $\omega \in \Omega$. Let $\mu : G_\omega \rightarrow \mathrm{Aut}(K)$ be the natural map induced by the conjugation action of $G_\omega$ on $K$. Then  $G$ is permutation isomorphic to $\mathcal G(K,G_\omega\mu,K_\omega)$.
 \end{thm}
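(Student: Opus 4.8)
The plan is to produce an explicit abstract isomorphism $\bar\psi\colon X/Z \to G$, where $X = K \rtimes (G_\omega\mu)$ is as in Construction~\ref{con:sptriples}, and then check that it carries the point-stabiliser of the $X/Z$-action onto $G_\omega$, so that the two permutation actions agree. Write $H = G_\omega\mu$ and $L = K_\omega$; by Lemma~\ref{lem:triple from sp gp is sp trip} the triple $(K,H,L)$ is a semiprimitive triple, so $\mathcal G(K,H,L) = X/Z$ with $Z = \{(c_{v^{-1}},v) : v \in L\}$ and $Y = HL$ is defined. Since $C_G(K)$ is semiregular, $G_\omega \cap C_G(K) = 1$, so $\mu$ is injective on $G_\omega$ and hence an isomorphism onto $H$; let $\alpha\colon H \to G_\omega$ be its inverse, so $(\alpha(h))\mu = h$ for all $h \in H$.

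First I would define $\psi\colon X \to G$ by $\psi(h,k) = \alpha(h)k$, the product being taken in $G$ with $\alpha(h) \in G_\omega$ and $k \in K$, and verify it is a homomorphism. Using the multiplication $(h,k)(h',k') = (hh',k^{h'}k')$ of $X$, this reduces to the single identity $\alpha(h')^{-1}k\alpha(h') = k^{h'}$, which holds because the automorphism $h' = (\alpha(h'))\mu$ of $K$ is by definition conjugation in $G$ by the element $\alpha(h') \in G_\omega$. Surjectivity of $\psi$ is immediate from $G = KG_\omega$ (valid since the plinth $K$ is transitive): any $g = g_0k$ with $g_0 \in G_\omega$, $k \in K$ is $\psi(g_0\mu,k)$.

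Next I would compute $\ker(\psi)$. If $\alpha(h)k = 1$ then $\alpha(h) = k^{-1} \in G_\omega \cap K = K_\omega = L$; putting $u = \alpha(h) \in L$ gives $k = u^{-1}$ and $h = u\mu = c_u$, the last equality because conjugation in $G$ by the element $u \in K$ restricts on $K$ to the inner automorphism $c_u$. Hence $\ker(\psi) = \{(c_u,u^{-1}) : u \in L\} = \{(c_{v^{-1}},v) : v \in L\} = Z$, and $\psi$ descends to an isomorphism $\bar\psi\colon X/Z \to G$.

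Finally I would turn this into a permutation isomorphism. Because $Z \leqslant Y$, the action of $X/Z$ on $\Omega(K,H,L) = [X:Y]$ is its action on the cosets of $Y/Z$, with point-stabiliser $Y/Z$. Since $\psi(Y) = \psi(H)\psi(L) = \alpha(H)L = G_\omega K_\omega = G_\omega$, the isomorphism $\bar\psi$ sends $Y/Z$ onto $G_\omega$; as $G$ is transitive with stabiliser $G_\omega$ at $\omega$, the $G$-sets $[G:G_\omega]$ and $\Omega$ are isomorphic, and composing yields the desired $G$-equivariant bijection between $[X:Y]$ and $\Omega$. I expect the homomorphism check to be the only delicate point: it is where the convention for the $H$-action on $K$ inside the semidirect product $X$ must be matched exactly to conjugation in $G$, and where the kernel must be pinned down as precisely $Z$ rather than a larger subgroup. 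Once $\psi$ is correctly chosen, the remaining steps are routine bookkeeping.
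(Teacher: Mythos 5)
Your proposal is correct and follows essentially the same route as the paper: the paper defines $\pi : X \rightarrow G$ by $(h\mu,k) \mapsto hk$ (identical to your $\psi$ via $\alpha = \mu^{-1}$) together with the bijection $Y(1,k) \mapsto G_\omega k$, and leaves the verification that $\pi$ is a homomorphism with kernel $Z$ as routine. You have simply filled in those routine checks — the conjugation identity, the kernel computation, and $\psi(Y) = G_\omega$ — all correctly.
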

 \begin{proof}
Write $H=G_\omega\mu$ and $L=K_\omega$ and continue with the notation established above. Without loss of generality, we assume that $\Omega = [G: G_\omega]$. Since $X=KY$ and $G=KG_\omega$, we may define the following
 \begin{eqnarray*}
 f: [X :Y ] \rightarrow \Omega & \text{by} & f : Y(1,k) \mapsto G_\omega k, \\
\pi : X \rightarrow G  & \text{by} & \pi : (h\mu,k) \mapsto hk.
\end{eqnarray*}
It is routine to check that $f$ is a bijection, that $\pi$ is a homomorphism with kernel $Z$ and that the pair $(f,\pi)$ is a permutational isomorphism. We simply note that $\pi$ is well-defined since $\mu$ is an isomorphism.
%
 \end{proof}
 
 We shall now illustrate a method of combining semiprimitive triples to produce new semiprimitive triples. First, for a semiprimitive triple $(K,H,L)$, note that the map $\tau : L \rightarrow \mathrm{Aut}(K)$ is a monomorphism. Indeed, if $x\in \ker(\tau)$ then $x \in \mathrm Z(K)$, so that $x\in \mathrm{core}_K(L)$. Hence $x=1$ by Definition~\ref{defn:trips}(2). Moreover, the following identity holds for all $x\in L$ and $h\in H$:
$$ (x^h)\tau = c_{x^h} = (c_x)^h = (x\tau)^h.$$
We shall use this identity in several places below.

\begin{defn}
\label{defn:product of trips}
Let $(K_1,H_1,L_1)$ and $(K_2,H_2,L_2)$ be semiprimitive triples. Let $\tau_i : L_i \rightarrow \mathrm{Aut}(K_i)$  be the natural maps induced by the conjugation action of $L_i$ on $K_i$.
Note also that $L_i\tau_i \leqslant H_i$ by Definition~\ref{defn:trips}(2).

Suppose that there exists an isomorphism $\mu : H_1 \rightarrow H_2$ such that 
$$L_1\tau_1 \mu  = L_2\tau_2.$$
We  define a product on such  triples by
$$(K_1,H_1,L_1) * (K_2,H_2,L_2) = (K_1 \times K_2, \mathrm{diag}_\mu(H_1,H_2), \mathrm{diag}_{\tau_1\mu\tau_2^{-1}}(L_1,L_2)).$$
\end{defn}

\begin{thm}
The  product of semiprimitive triples defined in Definition~\ref{defn:product of trips} is a semiprimitive triple.
\end{thm}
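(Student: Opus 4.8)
The plan is to verify the three conditions of Definition~\ref{defn:trips} for the triple
$$(K,H,L) = (K_1\times K_2,\ \mathrm{diag}_\mu(H_1,H_2),\ \mathrm{diag}_{\tau_1\mu\tau_2^{-1}}(L_1,L_2)),$$
keeping in mind throughout that $H$ acts on $K$ coordinatewise, with $(h,h\mu)$ acting as $h$ on $K_1$ and as $h\mu$ on $K_2$. Write $\theta=\tau_1\mu\tau_2^{-1}\colon L_1\to L_2$, so that $L=\{(x,x\theta)\mid x\in L_1\}$ and $\pi_i(L)=L_i$, where $\pi_i$ is the projection $K\to K_i$. At the outset I would record the two facts that drive everything: first, projecting a normal subgroup of $K$ yields normal subgroups of the $K_i$, and projecting an $H$-invariant one yields $H_i$-invariant subgroups; second, the identity $(x^h)\tau_i=(x\tau_i)^h$ from the discussion before Definition~\ref{defn:product of trips}, together with the compatibility $x\tau_1\mu=(x\theta)\tau_2$ that is built into the definition of $\theta$.

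For condition~(1) I would use the reformulation recorded just before Lemma~\ref{lem:sp iff h faithful on k/y}: it suffices to show $[K,N]=K$ for every non-trivial $N\trianglelefteq H$. Such an $N$ has the form $\mathrm{diag}_\mu(N_1,N_1\mu)$ for a non-trivial $N_1\trianglelefteq H_1$. The key point is that because $K$ is a direct product and $N$ acts independently on the two factors, I can isolate each factor: the commutators $[(k_1,1),(h,h\mu)]=([k_1,h],1)$ generate $[K_1,N_1]\times 1$, and by condition~(1) for the first triple $[K_1,N_1]=K_1$, so $K_1\times 1\leqslant[K,N]$; symmetrically $1\times K_2\leqslant[K,N]$ using $N_1\mu\trianglelefteq H_2$, whence $[K,N]=K$. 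This step needs no perfectness hypothesis.

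For condition~(2) I would first compute $K_0=\{(k_1,k_2)\mid k_i\in K_{0,i},\ c_{k_2}=c_{k_1}\mu\}$ directly from the definition of $K_0$ and the coordinatewise action (where $K_{0,i}=\{k\in K_i\mid c_k\in H_i\}$). That $L\leqslant K_0$, that $L\trianglelefteq K_0$, and that $L$ is $H$-invariant are then routine using the two recorded identities: for $(x,x\theta)\in L$ one checks that the second coordinate of any conjugate is again the $\theta$-image of the first. Core-freeness is clean: if $N\trianglelefteq K$ with $N\leqslant L$, then each $\pi_i(N)$ is a normal subgroup of $K_i$ contained in $L_i=\pi_i(L)$, hence trivial by core-freeness of $L_i$, so $N=1$. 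Finally, if $L\neq1$ then $L_1,L_2\neq1$, so $K_1,K_2$ are perfect by condition~(2) for the factors, and therefore $K=K_1\times K_2$ is perfect.

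Condition~(3) is where the real work lies, and the main obstacle is the subdirect case. Suppose $R$ is a proper $H$-invariant normal subgroup of $K$ with $K=LR$. If $L=1$ this forces $R=K$, a contradiction, so I may assume $L\neq1$ and hence, by the previous paragraph, that $K_1$ and $K_2$ are perfect. Projecting $K=LR$ gives $K_i=L_i\pi_i(R)$, and since $\pi_i(R)$ is an $H_i$-invariant normal subgroup of $K_i$, condition~(3) for each factor forces $\pi_i(R)=K_i$; thus $R$ is subdirect in $K_1\times K_2$. Now perfectness finishes the argument: since $\pi_1(R)=K_1$, the commutators $[(a,1),(c,d)]=([a,c],1)$ with $(c,d)\in R$ generate $[K_1,K_1]\times 1=K_1\times 1$, and as $R\trianglelefteq K$ this commutator subgroup lies in $R$; hence $K_1\times1\leqslant R$, and then $\pi_2(R)=K_2$ gives $1\times K_2\leqslant R$, so $R=K$, the desired contradiction. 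The crux is thus recognising that condition~(3) for the product can fail only through a proper subdirect $R$, and that such an $R$ is excluded by exactly the perfectness that condition~(2) supplies whenever $L\neq1$.
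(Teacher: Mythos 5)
Your proof is correct and follows essentially the same route as the paper: verify the three conditions of Definition~\ref{defn:trips} via the coordinate projections, with the subdirect case of condition~(3) excluded by the perfectness supplied by condition~(2). The only cosmetic differences are that you work with the commutator reformulation $[K,N]=K$ of condition~(1) (recorded as equivalent just before Lemma~\ref{lem:sp iff h faithful on k/y}) and that you inline the two-factor case of Lemma~\ref{lem:normal subgroups of perfect groups} instead of citing it.
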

\begin{proof}
With the above notation, let $(K,H,L)=(K_1,H_1,L_1)*(K_2,H_2,L_2)$. Note that $H$ is a subgroup of $\mathrm{Aut}(K_1) \times \mathrm{Aut}(K_2)$ and so $H$ is indeed a group of automorphisms of $K$. We first check that Definition~\ref{defn:trips}(1) holds. Suppose that $U$ is a $H$-invariant normal subgroup of $K$ and that $B$ is the kernel of the action of $H$ on $K/U$. Then $B$ acts trivially on $K_1U/U \cong K_1/K_1 \cap U$. Since the action of $H$ on $K_1$ is the same as the action of $H_1$ on $K_1$, this says that $K_1 \cap U = K_1$. Thus $K/U =K_1K_2 /U \leqslant K_2 U/ U \cong K_2 / K_2 \cap U$. Similarly, the action of $H$ on $K_2$ is equivalent to the action of $H_2$ on $K_2$, and so we have that $K_2 \cap U= K_2$. Hence $U=K$ and part (1) of Definition~\ref{defn:trips} holds.

Recall the definition of $K_0$ from Definition~\ref{defn:trips}(2). We need to show that $L$ is a normal, core-free subgroup of $K_0$. We may write  $L= \mathrm{diag}_{\mu}(L_1\tau_1,L_2\tau_2^{-1})$. Since $L_1\tau_1$  is normal in $H_1$ and $L_2\tau_2^{-1}$ is normal in $H_2$ then, we find that $L$ is normal in $\diag_\mu(H_1,H_2)$.  For simplicity of notation, let us write $\sigma=\tau_1\mu\tau_2^{-1}$. Note that an element $(k_1,k_2) \in K$ is in $K_0$ if and only if there is $h\in H$ such that $h=(u,u\mu)$ and $x^{(k_1,k_2)}=x^h = x^{(u,u\mu)}$ for all $x \in K$. Now for $r = (t,t\sigma) \in L$ we have $r^{(k_1,k_2)} = (t,t\sigma)^{(u,u\mu)} = (t^u,(t\sigma)^{u\mu})$. Note that $t^u \in L_1$ since $H_1$ normalises $L_1$, hence we simply need to see that $(t^u)\sigma = (t\sigma)^{u\mu}$. Note that $(t^u)\tau_1 =(t\tau_1)^u$ (where the action of $u$ on the left is as an automorphism and the action on the right is as conjugation). Hence $(t^u)\tau_1\mu = ((t\tau_1)^u)\mu = (t\tau_1\mu)^{u\mu}$. Now since $t\tau_1\mu \in L_2 \tau_2$ and $u\mu \in H_2$, we have $((t \tau_1\mu)^{u\mu})\tau_2^{-1} = (t\tau_1\mu\tau_2^{-1})^{u\mu}=(t\sigma)^{u\mu}$. 

We now verify that $L$ is core-free in $K$. For $i=1,2$ let $\pi_i: K \rightarrow K_i$ be the natural projections. Let $R$ be the core of $L$ in $K$. Then $\pi_i(R)$ is normal in $K_i$. Since $\pi_i(L)=L_i$, we have that $\pi_i(R)=1$ for $i=1,2$. Hence $R \leqslant \ker(\pi_1) \cap \ker(\pi_2) = K_1 \cap K_2 = 1$.  Hence part (2) of Definition~\ref{defn:trips} holds.
 
We now verify that Definition~\ref{defn:trips}(3) holds.  Suppose  that $R$ is a normal $H$-invariant subgroup of $K$ such that $LR=K$. We may assume that $L \neq 1$, hence $L_1$ and $L_2$ are both non-trivial. By Definition~\ref{defn:trips}(2) we have that  each $K_i$ is perfect, and so $K$ is perfect. Now
$$K_i=\pi_i(K) = (LR)\pi_i = (L\pi_i)(R\pi_i) = L_i(R\pi_i).$$
Hence $R \pi_i = K_i$ for each $i=1,2$ by Definition~\ref{defn:trips}(3). Applying Lemma \ref{lem:normal subgroups of perfect groups} we have that $R=K$.  This completes the proof.
\end{proof}

For clarity, we give the interpretation of the previous result for permutation groups. 

\begin{cor}
\label{cor:product}
Let $G_1 \leqslant \mathrm{Sym}(\Omega_1)$ and $G_2 \leqslant \mathrm{Sym}(\Omega_2)$ be semiprimitive groups with plinths $K_1$ and $K_2$. Let $\omega_1 \in \Omega_1$ and $\omega_2 \in \Omega_2$ and suppose there is an isomorphism $\mu : (G_1)_{\omega_1} \rightarrow (G_2)_{\omega_2}$ such that $(K_1)_{\omega_1}\mu=(K_2)_{\omega_2}$.
Then there exists a semiprimitive group $G$ with plinth $K_1\times K_2$ and point-stabilisers isomorphic to $(G_1)_{\omega_1}$ such that $G_1$ and $G_2$ are quotient actions of $G$ via normal subgroups.
\end{cor}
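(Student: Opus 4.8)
The plan is to translate the statement into the language of semiprimitive triples, glue the two triples via Definition~\ref{defn:product of trips}, and then recover $G_1$ and $G_2$ as quotient actions of the group produced by Construction~\ref{con:sptriples}. First I would pass from the permutation groups to triples. For $i=1,2$ let $\mu_i\colon (G_i)_{\omega_i}\rightarrow\mathrm{Aut}(K_i)$ be the conjugation map, put $H_i=(G_i)_{\omega_i}\mu_i$ and $L_i=(K_i)_{\omega_i}$. By Lemma~\ref{lem:triple from sp gp is sp trip}, each $(K_i,H_i,L_i)$ is a semiprimitive triple and $\mu_i$ restricts to an isomorphism $(G_i)_{\omega_i}\rightarrow H_i$. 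Moreover, since $x\in L_i\subseteq K_i$ acts on $K_i$ as $c_x=x\tau_i$, the restriction of $\mu_i$ to $L_i$ equals $\tau_i$, so that $L_i\mu_i=L_i\tau_i$.

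Next I would build the gluing isomorphism. Set $\nu=\mu_1^{-1}\mu\mu_2\colon H_1\rightarrow H_2$, an isomorphism as a composite of three isomorphisms. Using $L_i\mu_i=L_i\tau_i$ together with the hypothesis $(K_1)_{\omega_1}\mu=(K_2)_{\omega_2}$, that is $L_1\mu=L_2$, I would compute $L_1\tau_1\nu=L_1\mu_1\mu_1^{-1}\mu\mu_2=L_1\mu\mu_2=L_2\mu_2=L_2\tau_2$, which is precisely the compatibility required in Definition~\ref{defn:product of trips}. Hence the product $(K,H,L)=(K_1,H_1,L_1)*(K_2,H_2,L_2)$ is defined, with $K=K_1\times K_2$, $H=\mathrm{diag}_\nu(H_1,H_2)$ and $L=\mathrm{diag}_{\tau_1\nu\tau_2^{-1}}(L_1,L_2)$, and it is a semiprimitive triple by the theorem preceding this corollary. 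Feeding it into Construction~\ref{con:sptriples} gives $G=\mathcal G(K,H,L)$, which by the construction lemmas is semiprimitive with plinth $\cong K_1\times K_2$, point-stabiliser $\cong H\cong H_1\cong(G_1)_{\omega_1}$, and plinth--point-stabiliser intersection $\cong L$.

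Finally I would identify the quotient actions. The subgroups $K_1$ and $K_2$ of the plinth are normalised by $H=\mathrm{diag}_\nu(H_1,H_2)$, since each of its elements acts factor-wise, and hence are normal in $G$; being proper subgroups of the plinth they are intransitive. By Lemma~\ref{lem:plinths in quotient actions}(a) and Lemma~\ref{lem: quotient of sp is sp}, the quotient action $\overline G=G/K_2$ is semiprimitive and faithful, with plinth $\overline K\cong K/K_2\cong K_1$ and point-stabiliser isomorphic to $H$. Applying Lemma~\ref{lem:triple from sp gp is sp trip} to $\overline G$ and transporting along $\overline K\cong K_1$, the associated triple is $(K_1,H_1,L_1)$: the diagonal group $H$ induces on $\overline K\cong K_1$ precisely the action of $H_1$, and the intersection of plinth and point-stabiliser is the image of $L=\mathrm{diag}_{\tau_1\nu\tau_2^{-1}}(L_1,L_2)$ under the projection killing $K_2$, namely $L_1$. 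Since $(K_1,H_1,L_1)$ is also the triple attached to $G_1$, the equivalence theorem between semiprimitive triples and semiprimitive groups shows that both $\overline G$ and $G_1$ are permutationally isomorphic to $\mathcal G(K_1,H_1,L_1)$, and hence to each other; symmetrically, the quotient via $K_1$ is permutationally isomorphic to $G_2$.

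The main obstacle is this last step: verifying that quotienting the glued triple by one direct factor returns the original triple. The delicate points are that the diagonal automorphism group $\mathrm{diag}_\nu(H_1,H_2)$ acts factor-wise, so that it induces exactly $H_i$ on the surviving factor, and that the diagonal subgroup $L$ projects onto $L_i$; together these ensure that the recovered triple is the full datum $(K_i,H_i,L_i)$ rather than a proper subdatum, which is what lets the equivalence theorem return $G_i$.
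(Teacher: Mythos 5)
Your proposal is correct and follows exactly the route the paper intends: the corollary is stated there as the permutation-group interpretation of the product theorem, with no separate proof, and your argument supplies the implicit details -- passing to triples via Lemma~\ref{lem:triple from sp gp is sp trip}, verifying the compatibility condition $L_1\tau_1\nu=L_2\tau_2$ for $\nu=\mu_1^{-1}\mu\mu_2$, applying Construction~\ref{con:sptriples}, and recovering $G_1$ and $G_2$ as the quotient actions via $K_2$ and $K_1$. All the steps check out, including the key point that $\mathrm{diag}_\nu(H_1,H_2)$ acts factor-wise so that the triple attached to $G/K_{3-i}$ is precisely $(K_i,H_i,L_i)$.
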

%

\begin{defn}
We call the product of the two permutation groups $G_1$ and $G_2$ given in Corollary~\ref{cor:product} the \emph{glued product} of $G_1$ and $G_2$.
\end{defn}

\begin{rem}
Although the glued product of two semiprimitive groups is again a semiprimitive group, the glued product of any two innately transitive groups fails to be innately transitive -- every plinth of an innately transitive group is a minimal normal subgroup whereas  the construction of the glued product of two semiprimitive groups visibly gives a plinth which is not a minimal normal subgroup.
\end{rem}

\begin{eg}
Let $G_1=\mathrm{Sym}(3)=\mathrm D_6$ and $G_2=\mathrm D_{10}$. Then both $G_1$ and $G_2$ are semiprimitive with regular plinths (of order three and five respectively) and point-stabilisers of order two. Hence we may form the glued product, $(\mathrm C_3 \times \mathrm C_5) \rtimes \mathrm C_2$, which is isomorphic to $\mathrm D_{30}$.
\end{eg}

\begin{eg}
Let $G_1 = \mathrm{Sym}(7)$ and $H_1 = \mathrm{Alt}(5) \times \langle g \rangle$ where $g$ is a transposition. Let $G_2 = \mathrm{Alt}(5) \wr \mathrm C_2$ and $H_2 = \mathrm{diag}(\mathrm{Alt}(5),\mathrm{Alt}(5)) \rtimes \langle\sigma\rangle$ where $\sigma$ is an element of order two interchanging the normal copies of $\mathrm{Alt}(5)$. Then if $K_1$ and $K_2$ are plinths of $G_1$ and $G_2$, we have $K_1 \cap H_1 = \mathrm{Alt}(5)$ and $K_2 \cap H_2 = \mathrm{Alt}(5)$, and $H_1 \cong H_2$. Thus we may form the glued product of $G_1$ and $G_2$, which is isomorphic to
$$(\mathrm{Alt}(7) \times \mathrm{Alt}(5) \times \mathrm{Alt}(5) ) \rtimes \langle (g,\sigma) \rangle$$
with a point-stabiliser in $G$ equal to $\{(x,x,x) \mid x \in \mathrm{Alt}(5) \} \times  \langle (g,\sigma) \rangle.$
\end{eg}

We now give a sufficient condition that allows us to recognise glued products of semiprimitive groups.

\begin{thm}
\label{thm: decomp perm isom}
Suppose that $G$ is a semiprimitive group with plinth $K$ such that $K = K_1 \times K_2 $ is a $G$-invariant decomposition of $K$ with $K_1 \neq 1 \neq K_2$. Then $G$ is permutationally isomorphic to the glued product of the semiprimitive groups $G_1 = G/K_2$ and $G_2 = G /K_1 $.
\end{thm}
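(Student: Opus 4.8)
The plan is to show that the semiprimitive triple attached to $G$ is precisely the $*$-product of the triples attached to $G_1$ and $G_2$, and then to invoke the reconstruction of a semiprimitive group from its triple. First I would record the setup: since $K_1$ and $K_2$ are proper normal subgroups of the plinth $K$, they are intransitive, hence semiregular. By Lemma~\ref{lem: quotient of sp is sp}, both $G_1 = G/K_2$ and $G_2 = G/K_1$ are faithful and semiprimitive on the sets of $K_2$- and $K_1$-orbits, with point-stabilisers isomorphic to $G_\omega$; and by Lemma~\ref{lem:plinths in quotient actions}(a) their plinths are $K/K_2 \cong K_1$ and $K/K_1 \cong K_2$. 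Fix $\omega \in \Omega$, let $\mu : G_\omega \to \mathrm{Aut}(K)$ be the conjugation map, and set $H = G_\omega\mu$ and $L = K_\omega$, so that $(K,H,L)$ is a semiprimitive triple and $G$ is permutationally isomorphic to $\mathcal G(K,H,L)$.

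Next I would treat the $H$-component. Since $K_1$ and $K_2$ are $G$-invariant they are $H$-invariant, so restriction gives homomorphisms $\rho_i : H \to \mathrm{Aut}(K_i)$; put $H_i = H\rho_i$. For $h \in H$ one has $[K,h] = [K_1,h][K_2,h]$ with $[K_i,h] \leqslant K_i$, so $h \in \ker\rho_1$ exactly when $[K,h]\leqslant K_2$, i.e.\ when $h$ centralises the non-trivial $H$-invariant quotient $K/K_2$. By Definition~\ref{defn:trips}(1) this forces $\ker\rho_1 = 1$, and symmetrically $\ker\rho_2 = 1$. Hence each $\rho_i$ is an isomorphism onto $H_i$, and since a factor-preserving automorphism of $K_1 \times K_2$ is determined by its two restrictions, $H = \{(h\rho_1,h\rho_2) : h \in H\} = \mathrm{diag}_\nu(H_1,H_2)$ where $\nu = \rho_1^{-1}\rho_2 : H_1 \to H_2$. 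Under the identifications above, $H_1$ and $H_2$ are exactly the automorphism groups appearing in the triples of $G_1$ and $G_2$.

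It remains to match the third components. Because $K_1,K_2$ are semiregular, $K_\omega \cap K_1 = K_\omega \cap K_2 = 1$, so if $p_i : K \to K_i$ are the projections then $K_\omega$ is the graph of an isomorphism $\psi : p_1(K_\omega) \to p_2(K_\omega)$; moreover $p_i(K_\omega)$ is precisely the plinth point-stabiliser $L_i$ of $G_i$ under the identification of its plinth with $K_i$. To finish I would verify $\psi = \tau_1\nu\tau_2^{-1}$, where $\tau_i : L_i \to \mathrm{Aut}(K_i)$ is conjugation. Given $t \in L_1$, write $x = (t,s) \in K_\omega$ for its unique preimage, so $s = t\psi$. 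As $K_\omega \leqslant K_0$ we have $c_x \in H$, and restricting to the factors gives $c_x\rho_1 = c_t = t\tau_1$ and $c_x\rho_2 = c_s = s\tau_2$; hence $(t\tau_1)\nu = c_x\rho_2 = (t\psi)\tau_2$, and applying the monomorphism $\tau_2^{-1}$ yields $t\tau_1\nu\tau_2^{-1} = t\psi$. This also shows $L_1\tau_1\nu = L_2\tau_2$, so the $*$-product is defined, and we conclude $(K,H,L) = (K_1,H_1,L_1) * (K_2,H_2,L_2)$. Applying Construction~\ref{con:sptriples} and the reconstruction of a semiprimitive group from its triple, $G$ is permutationally isomorphic to $\mathcal G$ of this product triple, which by Corollary~\ref{cor:product} is, by definition, the glued product of $G_1$ and $G_2$. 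I expect the main obstacle to be the bookkeeping of this last paragraph: keeping the several identifications consistent -- of $\overline{K}$ with $K_i$, of the quotient actions with $G_\omega$, and of the conjugation maps $\tau_i$ -- so that the diagonal gluing isomorphism really is $\tau_1\nu\tau_2^{-1}$ and not merely an abstract isomorphism of the correct source and target.
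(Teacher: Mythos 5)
Your proof is correct and follows essentially the same route as the paper: both pass through the semiprimitive-triple formalism, use Lemmas~\ref{lem: quotient of sp is sp} and~\ref{lem:plinths in quotient actions} to set up $G_1$ and $G_2$, and identify the relevant data with the $*$-product of their triples. The only difference is in the last step, and it is cosmetic: where you show that the triple $(K,G_\omega\mu,K_\omega)$ of $G$ literally equals the product triple and then invoke the reconstruction theorem, the paper instead writes down an explicit epimorphism $\sigma:(M_1\times M_2)\rtimes H\to G$ and checks that its kernel is the subgroup $Z$ of Construction~\ref{con:sptriples}, which trades your identification bookkeeping for a direct computation.
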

\begin{proof}
Since $K_1$ and $K_2$ are proper subgroups of $K$ that are normal in $G$, both are semiregular, hence $G_1$ and $G_2$ are semiprimitive. For $i=1,2$ let $\pi_i : G \rightarrow G_i$ be the canonical map (so that $\ker\pi_i = K_{3-i}$). For $i=1,2$ set $H_i = (G_\omega)\pi_i$ and $M_i = K\pi_i= K_i \pi_i$ so that we have  
$$G_i = (KG_\omega)\pi_i = (K)\pi_i (G_\omega)\pi_i = M_i  H_i.$$ 
Now $M_i$ is a plinth of $G_i$ by Lemma~\ref{lem:plinths in quotient actions}.  Since each $K_i$ is semiregular, the maps $\pi_i|_{G_\omega}$ are isomorphisms. Hence the map $\mu : H_1 \rightarrow H_2$ defined by $(h\pi_1)\mu = h\pi_2$  is an isomorphism (that is,  $\mu= (\pi_1|_{H_1})^{-1}\pi_2|_{G_\omega}$). The semiprimitive triples of $G_1$ and $G_2$ are $(M_1,H_1,M_1 \cap H_1)$ and $(M_2,H_2,M_2 \cap H_2)$ respectively, thus with $H=\diag_\mu(H_1,H_2)$ and $L=\diag_\mu(M_1\cap H_1,M_2 \cap H_2)$ we have
$$(M_1,H_1,M_1 \cap H_1)*(M_2,H_2,M_2 \cap H_2) = (M_1 \times M_2, H,L).$$
With the notation as in Construction~\ref{con:sptriples}, let $X=(M_1 \times M_2) \rtimes H$. We define a map $\sigma : X \rightarrow G$ by 
$$\sigma: (h\pi_1,h\pi_2,k_1\pi_1,k_2\pi_2 ) \rightarrow h k_1k_2  .$$
It is easy to verify $\sigma$ is a homomorphism, we now find $\ker(\sigma)$. 
Let $x=(h\pi_1,h\pi_2,k_1\pi_1,k_2\pi_2)$, and suppose $x\sigma = 1$. Then $hk_1k_2 = 1$, so that $k_1k_2 = h^{-1} \in K_\omega$. Further, $k_1 = h^{-1}k_2^{-1}$ so that $k_1\pi_1 = h^{-1}\pi_1$ and similarly $k_2\pi_2 = h^{-1}\pi_2$. Hence $x=(h\pi_1,h\pi_2,h^{-1}\pi_1,h^{-1}\pi_2) \in Z$ (with $Z$ as in Construction~\ref{con:sptriples}). Thus $\sigma$ is an isomorphism between $X/Z$ and $G$. Clearly the image of $Y$ is $G_\omega$, thus $G$ and $\mathcal G(M_1 \times M_2,H,L)$ are permutationally isomorphic.
\end{proof}

We now record some cases when it is impossible to glue innately transitive groups. In Section~\ref{sec:ex} we will show that, apart from these cases, all other glueings are possible.

\begin{lem}
\label{lem:non-glue}
The following pairs of innately transitive groups cannot be glued: (regular plinth, non-regular plinth),  (SD,CD), (HS,HC), (AS$^ \text{reg} $,DQ).
\end{lem}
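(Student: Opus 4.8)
The plan is to read off, for each of the four pairs, either the isomorphism type of the plinth-stabiliser $(K_i)_{\omega_i}$ or of the point-stabiliser $(G_i)_{\omega_i}$, and to exhibit a discrepancy that no isomorphism can bridge. By Corollary~\ref{cor:product}, glueing $G_1$ and $G_2$ requires an isomorphism $\mu : (G_1)_{\omega_1} \to (G_2)_{\omega_2}$ carrying $(K_1)_{\omega_1}$ onto $(K_2)_{\omega_2}$. Hence a pair cannot be glued as soon as I produce an invariant that every such $\mu$ must respect but which differs between the two types: either \textbf{(I)} the isomorphism type of $(K_i)_{\omega_i}$ (matched up directly by $\mu$), or \textbf{(II)} some characteristic invariant of $(G_i)_{\omega_i}$ (preserved because $\mu$ is an abstract isomorphism). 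Transitivity makes both invariants independent of the chosen points.

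First I would dispose of the two pairs separated by invariant (I). If $K_1$ is regular and $K_2$ non-regular, then $(K_1)_{\omega_1} = 1 \neq (K_2)_{\omega_2}$, and no isomorphism sends the trivial subgroup to a non-trivial one. For the pair (SD,CD), both plinths are non-regular and coincide with the socle, with $(K_1)_{\omega_1} = \mathrm{soc}(G_1)_{\omega_1} \cong T$ a full diagonal subgroup (hence simple) in the SD case, whereas $(K_2)_{\omega_2} = \mathrm{soc}(G_2)_{\omega_2} \cong T^k$ with $k > 1$ in the CD case. These have different numbers of simple direct factors, so they are non-isomorphic and cannot be matched by $\mu$.

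The remaining pairs, (HS,HC) and (AS$^{\text{reg}}$,DQ), consist of types whose plinths are both regular, so invariant (I) is vacuous (both plinth-stabilisers are trivial) and I must instead obstruct the existence of any isomorphism $\mu$ of point-stabilisers, i.e. use invariant (II). For (HS,HC) I would invoke Lemma~\ref{lem:prim of type hs hc}: a minimal normal subgroup of an HS group is non-abelian simple, so $\mathrm{soc}((G_1)_{\omega_1})$ is simple, whereas for HC a minimal normal subgroup is $\cong T^k$ with $k \geq 2$, so $\mathrm{soc}((G_2)_{\omega_2}) \cong S^{\ell}$ with $\ell \geq 2$ is not simple. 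Since the socle is characteristic, $\mu$ would induce an isomorphism between a simple group and a non-simple one, which is impossible. For (AS$^{\text{reg}}$,DQ) I would compare solubility: in AS$^{\text{reg}}$ the plinth $T$ is regular, so $(G_1)_{\omega_1} \cong G_1/T$ embeds in $\mathrm{Aut}(T)/\mathrm{Inn}(T) = \mathrm{Out}(T)$, which is soluble by the Schreier conjecture, while in DQ the point-stabiliser is, by Lemma~\ref{lem: quotient of sp is sp}, isomorphic to a point-stabiliser of its SD or CD quotient action, hence contains a non-trivial product of non-abelian simple groups and is non-soluble. As solubility is an isomorphism invariant, no such $\mu$ can exist.

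The step demanding the most care is correctly extracting the plinth- and point-stabiliser structure from the eleven type descriptions and verifying that the chosen invariant is genuinely preserved by $\mu$. In particular, the (HS,HC) case rests entirely on Lemma~\ref{lem:prim of type hs hc} pinning down the number of simple factors of the point-stabiliser socle, and the (AS$^{\text{reg}}$,DQ) case on the solubility of $\mathrm{Out}(T)$ together with the quotient-action description of DQ via Lemma~\ref{lem: quotient of sp is sp}. These are the places where I expect to argue carefully rather than merely quote the type lists.
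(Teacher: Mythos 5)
Your proposal is correct and follows essentially the same route as the paper: the paper likewise rules out (regular, non-regular) and (SD,CD) by comparing $K_\omega$ with $L_\delta$ directly, handles (HS,HC) by comparing $\mathrm{soc}(G_\omega)$ with $\mathrm{soc}(H_\delta)$ via the description of those types, and handles (AS$^{\text{reg}}$,DQ) by the solubility of $G_\omega$ from the Schreier Conjecture against the non-soluble point-stabiliser of a DQ group. Your write-up merely makes explicit a few steps the paper leaves implicit (e.g.\ routing the DQ case through Lemma~\ref{lem: quotient of sp is sp}).
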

\begin{proof}
Suppose that $G\leqslant \sym{\Omega}$ and $H \leqslant \sym{\Delta}$ are innately transitive with plinths $K$ and $L$ and let $\omega\in \Omega$ and $\delta \in \Delta$. A necessary condition to form the glued product of $G$ and $H$ is that there is an isomorphism $\mu : G_\omega \rightarrow H_\delta$ such that $(K_\omega)\mu =  L_\delta $. Thus clearly a product of a pair of types such as (regular plinth, non-regular plinth) is impossible. Suppose that $G$ has type SD and $H$ has type CD. Then there is a non-abelian simple group  $T$ such that $K_\omega \cong T$, whereas $L_\delta \cong S^\ell$ for some finite simple group $S$ and some integer $\ell>1$. A similar statement holds if $G$ has type HS and $H$ has type HC by considering $\mathrm{soc}(G_\omega)$ and $\mathrm{soc}(H_\delta)$. If $G$ has type AS$^{\text{reg}}$, then $G_\omega$ is soluble by the Schreier Conjecture, whereas if $H$ has type DQ, then $H_\delta$ involves a non-abelian simple group.
\end{proof}

\section{Structure Theorem}

We now prove a structure theorem for finite semiprimitive groups. Our division of the class of semiprimitive groups is based on the types of quotient actions that arise.  Examples of semiprimitive groups with the many different types of quotient actions are provided in Section~\ref{sec:ex}.

\begin{thm}
\label{thm:classification}
Let $G \leqslant \mathrm{Sym}(\Omega)$ be a finite semiprimitive group and let $\mathcal K$ be the set of plinths of $G$.
Then precisely one of the following holds:
\begin{enumerate}[(a)]
\item $\mathcal K=\{ K \}$. Let $  \mathcal S$ be a set of proper subgroups of $K$ that are maximal  with respect to being normal in $G$ and let $S = \bigcap_{M \in \mathcal S} M$. Then there exists a subset $\mathcal S' \subseteq \mathcal S$ such that  $G/S$ is permutationally isomorphic to the glued product of the innately transitive groups $G/M$ for $M\in \mathcal S'$  and one of the following holds:
\begin{enumerate}[(i)] 
\item $K$ is non-regular and each $G/M$ is of type AS$^{\text{non-reg}}$, ASQ$^\text{non-reg}$, PA, SD or CD;
\item  $K$ is regular and each $G/M$ is of type AS$^{\text{reg}}$, ASQ$^\text{reg}$, HA, TW, DQ or PQ;
\end{enumerate}
In cases (i) and (ii) respectively, $G$ cannot   have simultaneous  innately transitive quotients actions of types SD and CD, respectively, AS$^{\text{reg}}$ and DQ. Further, $\mathcal S = \mathcal S'$ unless case (ii) holds and there exists $M\in \mathcal S$ such that $G/M$ is of type HA.
\item $|\mathcal K | >1$ and each $K \in \mathcal K$ is regular. Let $ \mathcal S $ be a subset of $\mathcal K$ of size at least two and let $S=\bigcap_{K \in \mathcal S } K$. Then $G/S$ is permutationally isomorphic to the glued product of $|\mathcal S|-1$ primitive groups, all of type HS or all of type HC.   
\end{enumerate}
\end{thm}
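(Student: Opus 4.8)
The plan is to treat the two cases separately, in each using Proposition~\ref{prop:duality} to produce a $G$-invariant direct decomposition of a suitable $G$-simple section, and then peeling off the factors one at a time by repeated application of Theorem~\ref{thm: decomp perm isom}. The dichotomy itself is immediate: if some plinth is non-regular it is the unique plinth by Lemma~\ref{solplinthsreg}(i), so $|\mathcal K|>1$ forces every plinth to be regular (case (b)), while $\mathcal K=\{K\}$ is case (a). For case (a) I would apply Proposition~\ref{prop:duality} with ambient group $K$ and $N=S$, so that $\mathcal S$ is exactly the set of maximal $G$-normal subgroups $M$ with $S\leqslant M<K$. This gives $K/S=L_1\times\cdots\times L_s$ with each $L_i\cong K/M_i$ minimal normal in $G/S$ and $M_i\in\mathcal S$, with $s=r:=|\mathcal S|$ whenever $K/S$ is perfect. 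Since $S\leqslant K$, Lemma~\ref{lem:plinths in quotient actions}(a) shows $K/S$ is the unique plinth of the semiprimitive group $G/S$, and the decomposition $K/S=L_1\times(L_2\cdots L_s)$ is $G/S$-invariant, so Theorem~\ref{thm: decomp perm isom} realises $G/S$ as the glued product of $(G/S)/(L_2\cdots L_s)=G/M_1$ and $(G/S)/L_1$; iterating on the second factor (whose plinth is $L_2\times\cdots\times L_s$) expresses $G/S$ as the iterated glued product of the groups $G/M_i$, $M_i\in\mathcal S':=\{M_1,\dots,M_s\}$.

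Each $G/M_i$ is innately transitive with transitive minimal normal subgroup $K/M_i$. In case (a) every transitive normal subgroup of $G$ contains $K$ (by Lemma~\ref{solplinthsreg}(iii) when $K$ is non-regular, and by Lemma~\ref{superplinth} together with uniqueness when $K$ is regular), so Lemma~\ref{lem:plinths in quotient actions}(d) shows $K/M_i$ is the \emph{unique} plinth of $G/M_i$; in particular $G/M_i$ is not of type HS or HC. Lemma~\ref{lem:plinths in quotient actions}(b),(c) show $K/M_i$ is regular precisely when $K$ is, so reading off the innately transitive classification gives the type lists in (i) and (ii). For the exclusions, all the $G/M_i$ have point-stabiliser isomorphic to $G_\omega$ by Lemma~\ref{lem: quotient of sp is sp} and are glued, so Lemma~\ref{lem:non-glue} forbids the simultaneous occurrence of types SD and CD, and of types AS$^{\text{reg}}$ and DQ. For the final sentence, $\mathcal S'\subseteq\mathcal S$ always; if no $G/M$ has type HA then every $L_i$ is non-abelian, so $K/S$ is perfect and Proposition~\ref{prop:duality}(iii) yields $s=r$ and hence $\mathcal S'=\mathcal S$. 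Thus $\mathcal S\neq\mathcal S'$ forces an HA factor, which has abelian (regular) plinth and so lies in case (ii).

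For case (b) I would induct on $m=|\mathcal S|$. The base case $m=2$ is exactly Theorem~\ref{thm:mult plinths}: with $\mathcal S=\{P_1,P_2\}$ the quotient $G/S=G/(P_1\cap P_2)$ is primitive of type HS or HC, the single required factor. For the inductive step fix distinct plinths $P_1,\dots,P_m$, set $S=\bigcap_iP_i$, and examine the plinth $P_1/S$ of $G/S$. The subgroups $A_j=(P_1\cap P_j)/S$ ($2\leqslant j\leqslant m$) are $G$-normal with $P_1/A_j\cong X$ by Theorem~\ref{thm:mult plinths} and $\bigcap_jA_j=1$, so Proposition~\ref{prop:duality} applied to $P_1$ gives $P_1/S=L_1\times\cdots\times L_{m-1}$ with each $L_i\cong X$, the count being $m-1$ since $X$ is perfect. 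Applying Theorem~\ref{thm: decomp perm isom} to this $G/S$-invariant decomposition, after relabelling so that the complement of $L_1$ is $A_2$, expresses $G/S$ as the glued product of $(G/S)/A_2=G/(P_1\cap P_2)$, primitive of type HS or HC, with $(G/S)/L_1=G/(P_1\cap\bigcap_{j\geqslant3}P_j)$. The latter equals $G/\bigcap_{P\in\mathcal S''}P$ for $\mathcal S''=\{P_1,P_3,\dots,P_m\}$ of size $m-1$, so by induction it is a glued product of $m-2$ primitive groups all of the same type; this yields $m-1$ factors for $G/S$, all HS or all HC because every pairwise quotient $G/(P_i\cap P_j)$ carries the single type determined by the uniquely associated pair $(T,\ell)$ of Theorem~\ref{thm:mult plinths}.

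The step I expect to be the main obstacle is the bookkeeping in case (b): identifying the peeled-off factor as precisely $G/(P_1\cap P_2)$ and the residual factor as an intersection over a set of exactly $m-1$ plinths, so that the induction closes. This depends on the twin facts that each plinth is maximal among $G$-invariant normal subgroups of $\mathrm{sp}(G)$ (established inside the proof of Theorem~\ref{thm:mult plinths}) and that $P_1/S$ is perfect, which forces the number of direct factors to equal $m-1$ through Proposition~\ref{prop:duality}(iii). Absence of perfectness (the abelian/HA situation peculiar to case (a)) is precisely what allows the factor count to drop below $r$, and is thereby the source of the $\mathcal S\neq\mathcal S'$ exception recorded in case (a)(ii).
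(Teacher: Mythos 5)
Your proposal is correct and follows essentially the same route as the paper's proof: apply Proposition~\ref{prop:duality} to obtain the direct decomposition of $K/S$ (resp.\ of a fixed plinth modulo $S$), peel off factors by repeated application of Theorem~\ref{thm: decomp perm isom}, read off the innately transitive types from the regularity and uniqueness of the plinth $K/M$ via Lemma~\ref{lem:plinths in quotient actions}, and use perfectness to force $\mathcal S'=\mathcal S$ outside the HA situation. The only cosmetic differences are that you organise case (b) as an explicit induction where the paper says ``repeated application,'' and you derive the ``all HS or all HC'' claim from the uniquely determined pair $(T,\ell)$ in Theorem~\ref{thm:mult plinths} rather than from Lemma~\ref{lem:non-glue}; both are valid and your version makes the exclusion of types HS and HC in case (a) more explicit.
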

\begin{proof}
Suppose first that $\mathcal K = \{ K\}$. Note that $K/M$ is a plinth of $G/M$ for each $M\in \mathcal S$ and that $K/M$ is a regular plinth if and only if $K$ is regular by Lemma~\ref{lem:plinths in quotient actions}. 
Hence, if $K$ is non-regular, then $G/M$ is an innately transitive group of type AS$^{\text{non-reg}}$, ASQ$^\text{non-reg}$, PA, SD or CD for each $M\in \mathcal S$, and if $K$ is regular, then $G/M$ is innately transitive of type AS$^{\text{reg}}$, ASQ$^\text{reg}$, HA, TW, DQ or PQ.
By Proposition~\ref{prop:duality}, $K/S = L_1 \times \dots \times L_r$ for some integer $r \leqslant |\mathcal S|$ and some minimal normal subgroups $L_1$, \dots, $L_r$ of $G/S$. 
Repeated application of Theorem~\ref{thm: decomp perm isom} shows that  $G/S$ is the glued product of the innately transitive groups $G/M$ for $M \in \mathcal S'$ with $\mathcal S' \subseteq \mathcal S$. By Lemma~\ref{lem:non-glue} it is impossible to form the glued product of SD and CD groups and of AS$^{\text{reg}}$ and DQ groups, hence $G/S$ cannot have simultaneous quotient actions of type SD and CD or of types AS$^{\text{reg}}$ and DQ in the respective cases. Finally, $K/S$ is perfect unless there is some $M\in \mathcal S$ such that $G/M$ is of type HA, and so $\mathcal S'=\mathcal S$ unless this occurs.

Suppose now that $|\mathcal K | >1$ and let $\mathcal K = \{ K , K_1, \dots,K_r \}$.  For $i=1,\dots,r$ set $M_i= K  \cap K_i$ so that $S = \bigcap_{i=1}^r M_i$. By Theorem~\ref{thm:mult plinths} we have $K /M_i$ is perfect for each $i$.  By Proposition~\ref{prop:duality}, we have that $ K/S = L_1 \times \dots \times L_s$ for some $s\leqslant r$ where each $L_i$ is a minimal normal subgroup of $G/S$ and $L_i$ is isomorphic to $K/ M_i$. In particular, $K /S$ is perfect, and so Proposition~\ref{prop:duality} shows that $s=r=|\mathcal S|-1$. Repeated application of Theorem~\ref{thm: decomp perm isom} shows that $G/S$ is the glued product of the permutation groups $G/M_i$ for $i=1,\dots,r$.  Theorem~\ref{thm:mult plinths} shows that $G/M_i$ is a primitive group of type HS or HC, and Lemma~\ref{lem:non-glue} shows that either all $G/M$ for $M\in \mathcal S$ are of type HS or all are of type HC. 
\end{proof}

In the proof of part (a)(ii) of the above theorem the CFSG is invoked. This is similar to the part of the proof of the O'Nan-Scott Theorem that  shows that an primitive group of almost simple type cannot have a regular socle.

We now prove Theorem~\ref{thm:rad g}. Recall the definition of $\mathrm{rad}(G)$ from Definition~\ref{defn:plinths}.
\bigskip

\emph{Proof of Theorem~\ref{thm:rad g}.}
Let $G$ be a semiprimitive group and apply Theorem~\ref{thm:classification} to $G$. If case (a) holds take $\mathcal S$ to be the set of all proper   subgroups of $K$ that are maximal  with respect to being normal in $G$ so that $S = \mathrm{rad}(G)$. If case (b) holds then take $\mathcal S = \mathcal K$.
\qed

\section{Examples of semiprimitive groups}
\label{sec:ex}
  We now give examples of semiprimitive groups with quotient actions of all the types listed in cases (a)(i), (a)(ii) and (b) of Theorem~\ref{thm:classification}. All groups considered in this section will therefore be finite. Most of the examples below are constructed by applying Corollary~\ref{cor:product} to a set of semiprimitive groups.

\begin{eg}
Let $H= \mathrm{Alt}(5)\times \langle x \rangle$ where $x$ has order 2. 

Let $G_1 = \mathrm{Sym}(7)$ and identify $H$ with the obvious subgroup of the centraliser of a transposition. Then $G_1$ on the set of cosets of $H$ is quasiprimitive of type AS$^{\text{non-reg}}$. 

Let $G_2 = \mathrm{Alt}(9) \rtimes \langle \sigma \rangle$ where $\sigma$ is an inner automorphism induced by a double transposition. Identify $H$ with a subgroup of $G_2$ where $x$ is identified with $\sigma$. Then on the set of cosets of $H$, $G_2$ is innately transitive of type ASQ$^{\text{non-reg}}$. 

Let $G_3 = \mathrm{Alt}(6) \wr C_2$, and identify $H$ with the normaliser of a diagonal $\mathrm{Alt}(5)$ subgroup of $\mathrm{Alt}(6)\times \mathrm{Alt}(6)$. Then $G_3$ is quasiprimitive of type PA. 

Finally, let $G_4 = \mathrm{Alt}(5) \wr C_2$ and identify $H$ with the normaliser of a diagonal subgroup of  $\mathrm{Alt}(5)\times \mathrm{Alt}(5)$. Then $G_4$ is primitive of type SD on the set of cosets of $H$ in $G_4$.

We have identified $H$ with a subgroup of $G_1$, $G_2$, $G_3$ and $G_4$. If $K_i$ is a plinth of $G_i$ for $i=1,2,3,4$, then we observe that $H \cap K_i = \mathrm{Alt}(5)$. Corollary~\ref{cor:product} shows there is a semiprimitive group $G$ with quotient actions $G_1$, \dots, $G_4$. Thus $G$ is an example of a group occurring in case (a)(i) of Theorem~\ref{thm:classification} with a quotient action of type SD.
\end{eg}

\begin{eg}
Let $H = (\mathrm{Alt}(5) \times \mathrm{Alt}(5) ) \rtimes \langle \sigma, \tau\rangle$ where $\sigma$ and $\tau$ have order two, $\sigma$ switches the two copies of $\mathrm{Alt}(5)$ in $H$ and $\tau$ is central in $H$.

Let $G_1 = \mathrm{Alt}(5)^4 \rtimes \langle \sigma, \tau \rangle $ where $\langle \sigma, \tau \rangle$ acts on the four copies of $\mathrm{Alt}(5)$ as the Klein 4-group. Then we may identify $H$ with a subgroup of $G_1$, where each $\alt{5}$ is a diagonal subgroup across two copies of $\alt{5}$. On the set of cosets of $H$, $G_1$ is a primitive group of type CD.

Let $G_2= \mathrm{Alt}(6)^4 \rtimes \langle \sigma, \tau \rangle $ where $\langle \sigma, \tau \rangle$ acts on the four copies of $\mathrm{Alt}(6)$ as the Klein 4-group. Then $G_2$ contains a copy of $H$ (an embedding similar to that in $G_1$), and on the set of cosets of $H$, $G_2$ is a quasiprimitive group of type PA.

Let $G_3 = \mathrm{Sym}(12) \rtimes \langle \mu \rangle$ where $\mu$ acts on $\mathrm{Sym}(12) $ as conjugation by the element $(1,6)(2,7)(3,8)(4,9)(5,10)$. We may identify $H$ with a subgroup of $G_3$ such that $\sigma$ is identified with $\mu$ and $\tau$ is identified as the involution $(11,12)$. Then the action of $G_3$ on the set of cosets of $H$ in $G_3$ is innately transitive of type ASQ$^{\text{non-reg}}$.

Let $G_4 = \mathrm{PSL}_{10}(p) \rtimes \langle \mu, \alpha \rangle$ where $p\equiv 3 \mod 4$, $\mu$ is the inverse transpose automorphism and $\alpha$ is a diagonal automorphism induced by the matrix $$ \left [ \begin{array}{cc} 0 & \mathrm I_5 \\ \mathrm I_5 & 0 \end{array} \right ]$$
(which is non-inner because of the conditions on $p$).
We identify $\mathrm{Alt}(5)$ with a subgroup $T$ of $\mathrm{GL}_5(p)$ via the permutation representation on 5 points, and then $\mathrm{Alt}(5) \times \mathrm{Alt}(5)$ may be identified with the subgroup $T \times T^\alpha$ of $\mathrm{GL}_{10}(p)$. Note that both $T$ and $T^\alpha$ are centralised by $\mu$ since $T$ preserves an orthogonal form on the permutation module.  Taking the image of $T\times T^\alpha$ in $G_4$ we may identify $H$ with the subgroup $(T \times T^\alpha)\rtimes \langle \mu,\alpha \rangle$. The action of $G_4$ on the set of cosets of $H$ is quasiprimitive of type AS$^{\text{non-reg}}$.

For $i=1,2,3,4$ let $K_i$ be a plinth of $G_i$. Then $K_1 \cong  \mathrm{Alt}(5)^4$, $K_2 \cong \mathrm{Alt}(6)^4$, $K_3 \cong \mathrm{Alt}(12)$ and $K_4 \cong \mathrm{PSL}_{10}(p)$. In each case, we have $K_i \cap H = \mathrm{Alt}(5)^2$. Corollary~\ref{cor:product} shows that there is a semiprimitive group with quotient actions $G_1$, $G_2$, $G_3$ and $G_4$. Thus $G$ is an example of a group occurring in case (a)(i) of  Theorem~\ref{thm:classification} with a quotient action of type CD.
\end{eg}

\begin{eg}
Let $H=\mathrm C_4$ and let $T=\mathrm{PSL}(2,7^4)$. Let $G_1 =  T \rtimes H$ with $H$ acting as field automorphisms (AS$^{\text{reg}}$ type). Let $G_2 = T \rtimes H$ with $H$ an inner automorphism (ASQ$^{\text{reg}}$ type). Let $G_3 =   \mathrm C_5 \rtimes H = \mathrm{Frob}_{20}$ (HA type).  Let $G_4=T^4 \rtimes H = T \wr H$ (TW type), $G_5 = T^2 \rtimes H$ where  $H$ is generated by  $c_{(1,t)}\sigma$, for some element $t\in T$ of order order two and $\sigma$ an element interchanging the two copies of $T$. Note that $G_5$ is innately transitive of PQ type since  the centraliser of a plinth has order two (equal to $\langle (t,t,(c_{(t,t)})^{-1}) \rangle$). 

Corollary~\ref{cor:product} shows there is a semiprimitive group $G$ with quotient actions $G_1$, \dots, $G_5$, that is, having quotient actions of type AS$^{\text{reg}}$, ASQ$^{\text{reg}}$, HA, TW and PQ. Thus $G$ is an example of a group appearing in case (a)(ii) of Theorem~\ref{thm:classification}.
\end{eg}

\begin{eg}
Let $T=\mathrm{Alt}(6)$ and let $H=\mathrm C_2 \times \mathrm{Alt}(5)$. Let  $G_1 = \mathrm{Alt}(9)\rtimes H$ with $H$ acting as inner automorphisms (ASQ$^{\text{reg}}$ type). Let 
$G_2 = 3^4 \rtimes H$ where $3^4$ is an irreducible module for $H$ (HA type). 
Let  $G_3 =T\wr H$ (TW type). Let  
$G_4 = T^5 \rtimes( \alt{5} \times \mathrm C_2) \cong (T \wr \alt{5})\rtimes \mathrm C_2 $ with $C_2$ acting diagonally as an inner automorphism of order two and $\alt{5}$ permuting the copies of $T$. Then $G_4$ is innately transitive of PQ  type since the centraliser of the plinth $T^5$ has order two. Let 
$G_5 = (\mathrm{Alt}(5) \wr \mathrm C_2) \rtimes \alt{5}$ with $\mathrm C_2$ permuting the copies of $\alt{5}$ and  $\alt{5}$ acting diagonally as inner automorphisms (DQ type).

Now Corollary~\ref{cor:product} shows there is a semiprimitive group $G$ with point-stabiliser isomorphic to  $H$ with quotient actions of type ASQ$^{\text{reg}}$, HA, TW, PQ and DQ. Hence $G$ is an example of a group appearing in case (a)(ii) of Theorem~\ref{thm:classification}.
\end{eg}

\begin{eg}
Let $T$ be a non-abelian simple group and let $\ell$ and $n$ be integers with $n>1$ and $\ell \geqslant 1$. Let $S=T^\ell$ and let $G = S^n \rtimes \mathrm{Sym}(\ell)=(S_1 \times \dots \times S_n)\rtimes \mathrm{Sym}(\ell)$, where the $\mathrm{Sym}(\ell)$ subgroup permutes the $\ell$ copies of $T$ in each copy of $S$. View $H = T^\ell \rtimes \mathrm{Sym}(\ell)$ as a subgroup of $G$ by identifying the i$^{\text{th}}$ copies of $T$ from  each copy of $S$. Let $G$ act on the set of cosets of $H$, note that $H$ is core-free in $G$. Let $K_i = \prod _{j \neq i } S_j$. Then each $K_i$ is a regular plinth of $G$ and for any $i\neq j$ we have $K_i \cap K_j \cong T^{(n-2)\ell}$. Moreover, $G/K_i \cap K_j$ is primitive of type HS if $\ell = 1$ and HC if $\ell>1$. Hence $G$ is an example of a group occurring in case (b) of Theorem~\ref{thm:classification}. 
\end{eg}

The following example shows that groups of type appearing in case (b) of Theorem~\ref{thm:classification} can also have other types of quotient actions.

\begin{eg}
Let $T=\alt{6}$ and for $i=1,2$ let  $S_i=\alt{5}$. Let $S_2$ act on $T$ and $S_1$ as inner automorphisms and set $G=(T \times S_1) \rtimes S_2$. Let $G$ act on  the set of cosets of $ S_2$. Then the subgroup $K_1 = TS_1$ is a regular normal subgroup. For a given copy $R$ of $\alt{5}$ in $T$, let  $S_3 = \{ (x,x,c_{x^{-1}}) : x \in R \}$. Then $S_3$ is a normal intransitive subgroup of $G$. Let $K_2 = TS_3$. Then $K_2$ is a normal regular subgroup of $G$. Moreover, $ T = K_1 \cap K_2$ and $G/T$ is primitive of type $HS$. Thus $G$ is an example of a group occurring in case (b) of Theorem~\ref{thm:classification}. Note also that $G/S_1$ is innately transitive of type ASQ$^{\text{reg}}$.
\end{eg}

\section{Wildness}

In this section we  give examples of semiprimitive groups which might be considered as evidence that semiprimitive groups are ``wild''. The first example shows that there is no control over the composition factors in a plinth of a semiprimitive group.

\begin{eg} 
Let $G$ be a finite semiprimitive with plinth $K$, point-stabiliser $H$ and let $n$ be the degree of $G$.
Let $T$ be a finite non-abelian simple group and let $G^T = T \wr KH$. Identifying $H$ and $K$ with their images in $G^T$, we claim that $G^T$ is semiprimitive on the set of cosets of $H$ with plinth $T^nK$.

Indeed, since $G$ acts transitively on the factors of $T^n$, we have that every non-trivial normal subgroup of $G^T$ contained in $T^nK$ must contain $T^n$. Hence the non-trivial normal subgroups of $G^T$ contained in $T^nK$ are in bijection with the normal subgroups of $K$. In particular, each proper non-trivial normal subgroup of $T^nK$ is intransitive (pass to the quotient $G^T/T^n \cong G$ to see this). Thus $T^nK$ is a plinth of $G^T$. Moreover,   the action of $H$ on $T^nK$ is faithful since $H$ acts faithfully on $T^n$, and $H$ acts faithfully on each $H$-invariant quotient of $T^nK/T^n$ since they correspond to $H$-invariant quotients of $K$. Thus Lemma~\ref{lem:g is semi prim} shows that $G^T$ acting on the set of cosets of $H$ is semiprimitive.

For a sequence of non-abelian simple groups $(T_1,T_2,\dots)$ let $G_1 = G^{T_1}$ and for $i\geqslant 2$ let 
$G_i = (G_{i-1})^{T_i}$, by the above paragraph each $G_i$ is a semiprimitive group. Thus for any finite set $\mathcal{S}$ of non-abelian simple groups, there exists a semiprimitive group $R$ with a plinth $K$ such that each group in $\mathcal S$ appears in a composition series of $K$.
\end{eg}

We now give an example which shows that there is no control over the structure of normal semiregular subgroups outside the plinths of semiprimitive groups, and in fact, that this is the case even for innately transitive groups.

\begin{eg} 
Let $M$ be any finite group. Pick an integer $n \geqslant 5$ such that $M \times V \leqslant \mathrm{Alt}(n)$ for some non-trivial subgroup $V \leqslant \mathrm{Alt}(n)$. Set $G=\mathrm{Alt}(n) \times M$ and let 
$$H=\{ (mv,m) : m \in M, v\in V \}.$$
Note that $G=\mathrm{Alt}(n) H$ and that $H$ is core-free in $G$. Moreover, since $\mathrm{Alt}(n)$ is simple, $\mathrm{Alt}(n)$ is a plinth of $G$, and $G$ is innately transitive of type ASQ$^{\text{reg}}$ if $V=1$ and ASQ$^{\text{non-reg}}$ otherwise. Thus $G$ is semiprimitive if $H$ acts faithfully on $\mathrm{Alt}(n)$. Since the kernel of the action is contained in $\mathrm Z(\mathrm{Alt}(n))=1$, we have that $G$ is semiprimitive. Moreover, $M$ is a normal semiregular subgroup of $G$.
\end{eg}

 Lemma~\ref{solplinthsreg} shows that a non-regular plinth of a semiprimitive group must be perfect. The following example seeks to address the converse to this statement: is every perfect group (isomorphic to) a non-regular plinth in some semiprimitive group?
 
\begin{eg}
\label{eg:cfperfect}
Let $K$ be a finite centre-free perfect group and let $S$ be the largest semisimple quotient of $K$ (that is, the quotient of $K$ by the smallest normal subgroup $R$ of $K$ such that $K/R$ is a direct product of non-abelian simple groups). Then $S$ is a direct product of non-abelian finite simple groups. Pick $h\in K$ of  order a power of two such that $h$ projects to an   involution in each non-abelian finite group $T$ that is a direct factor of $S$. Such an element acts faithfully on each quotient of $S$.

Since $K$ is centre-free, $H:=\langle h\rangle$ is core-free in $K$ (otherwise the unique involution in $H$ would be in the centre of $K$). Let $K$ act on the set of cosets of $H$. By Lemma~\ref{lem:sp iff h faithful on k/y} this action is semiprimitive: if $N$ is a  normal subgroup of $K$ such that $H$ does not act faithfully on $K/N$, then $H$ centralises $K/N$ and so $H$ centralises $K/M$ where $M$ is a maximally normal subgroup of $K$ containing $N$. In particular, $K/M$ is a quotient of $T$ on which $h$ does not act faithfully, a contradiction to our choice of $h$. 
%
%
\end{eg}

\section{Graph-theoretical problems}
\label{sec:graphs}

The authors' interest in semiprimitive groups is mostly due to  \cite{psv}, and we are thus motivated to explore further  graph-theoretical problems. Let $\Gamma$ be a locally finite graph and let $G\leqslant \mathrm{Aut}(\Gamma)$ be vertex-transitive. We say that $(\Gamma,G)$ is locally $L$ (locally $\mathcal P$) for a permutation group $L$ (property $\mathcal P$ of permutation groups) if  for each vertex $x\in \Gamma$ we have $G_x^{\Gamma(x)}$ is permutation isomorphic to $ L$ ($G_x^{\Gamma(x)}$ has property $\mathcal P$).  Here $G_x^{\Gamma(x)}$ is the permutation group induced by $G_x$
 on the neighbourhood $\Gamma(x)$ of $x$ in $\Gamma$.

The following result shows that the automorphism groups of graphs belonging to a large family are semiprimitive. The proof is from \cite[Lemma 1.6]{cheryl-imp sym}. 

\begin{lem}
\label{lem:graph problem}
Let $\Gamma$ be a connected non-bipartite graph and let $G \leqslant \mathrm{Aut}(\Gamma)$. Suppose that $G$ is locally quasiprimitive and vertex-transitive. Then $G$ is semiprimitive on $V\Gamma$.
\end{lem}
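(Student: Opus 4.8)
The plan is to take an arbitrary normal subgroup $N$ of $G$ and show that it is either transitive or semiregular on $V\Gamma$. The engine of the argument is the interaction between $N$ and the local action. Fix a vertex $x$ and set $N_x = N \cap G_x$. Since $N \trianglelefteq G$ we have $N_x \trianglelefteq G_x$, and as the restriction map $G_x \to G_x^{\Gamma(x)}$ is a surjective homomorphism, the induced group $N_x^{\Gamma(x)}$ is a normal subgroup of $G_x^{\Gamma(x)}$. By hypothesis $G_x^{\Gamma(x)}$ is quasiprimitive, so $N_x^{\Gamma(x)}$ is either trivial or transitive on $\Gamma(x)$. First I would observe that this dichotomy does not depend on the choice of $x$: if $g \in G$ maps $x$ to $y$, then $g$ conjugates $N_x$ to $N_y$ (using $N^g = N$) and carries $\Gamma(x)$ to $\Gamma(y)$, so the two local actions are permutationally isomorphic. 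Thus exactly one of the two cases holds at every vertex simultaneously.

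In the trivial case, where $N_x$ fixes $\Gamma(x)$ pointwise for every $x$, I would prove that $N$ is semiregular by a connectivity argument. Consider the fixed-point set $F$ of $N_x$ in $V\Gamma$. Then $x \in F$ and $\Gamma(x) \subseteq F$; moreover, whenever $y \in F$ we have $N_x \leqslant N_y$, and since $N_y$ fixes $\Gamma(y)$ pointwise it follows that $\Gamma(y) \subseteq F$. Hence $F$ is closed under passing to neighbours, so connectedness of $\Gamma$ forces $F = V\Gamma$. As $G \leqslant \mathrm{Aut}(\Gamma)$ acts faithfully on $V\Gamma$, this gives $N_x = 1$ for all $x$, that is, $N$ is semiregular.

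In the transitive case, where $N_x^{\Gamma(x)}$ is transitive for every $x$, I would show that $N$ is transitive, and this is where I expect the real work — and the use of the non-bipartite hypothesis — to lie. The key local fact is that if $y \sim z$ then $\Gamma(y) = z^{N_y} \subseteq z^N$, so all neighbours of $y$ lie in a single $N$-orbit. Since the $N$-orbits form a $G$-invariant partition, this assigns to each orbit $\Delta$ a well-defined orbit $\bar\beta(\Delta)$ containing all neighbours of the vertices of $\Delta$, and one checks that $\bar\beta$ is well defined on orbits and satisfies $\bar\beta(\bar\beta(\Delta)) = \Delta$. If some orbit $\Delta$ satisfies $\bar\beta(\Delta) = \Delta$, then no edge leaves $\Delta$, so $\Delta$ is a union of connected components; connectedness gives $\Delta = V\Gamma$ and $N$ is transitive. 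Otherwise every orbit $\Delta$ is paired with a distinct orbit $\bar\beta(\Delta) = \Delta'$, and then $\Delta \cup \Delta'$ carries all edges incident to it and hence is a union of connected components; connectedness gives $V\Gamma = \Delta \cup \Delta'$ with every edge running between $\Delta$ and $\Delta'$, making $\Gamma$ bipartite and contradicting the hypothesis. I anticipate the main obstacle is organising this orbit/block bookkeeping cleanly enough that the bipartite alternative is genuinely forced; once this is set up, the non-bipartite assumption rules it out and leaves transitivity as the only possibility, completing the proof that every normal subgroup of $G$ is transitive or semiregular and hence that $G$ is semiprimitive on $V\Gamma$.
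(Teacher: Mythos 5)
Your proof is correct and follows essentially the same route as the paper's: pass to the local action of $N_x$, use quasiprimitivity to get the trivial/transitive dichotomy, deduce semiregularity by a connectivity argument in the trivial case, and in the transitive case show $N$ has at most two orbits so that non-bipartiteness forces transitivity. The paper states this more tersely (organised as the contrapositive, starting from a non-semiregular $N$), but the ideas are identical.
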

\begin{proof}
Suppose that $N$ is a normal subgroup of $G$ that is not semiregular. Then $N_x \neq 1$ for some $x\in \Gamma$. By connectivity of $\Gamma$, we have that $N_x^{\Gamma(x)} \neq 1$. Since $G_x^{\Gamma(x)}$ is quasiprimitive, and $N_x^{\Gamma(x)}$ is a non-trivial normal subgroup, we have that $N$ is locally-transitive. Then $N$ has at most two orbits on $V\Gamma$. Since $\Gamma$ is non-bipartite, $N$ has exactly one orbit, and so $N$ is transitive. Hence $G$ is semiprimitive.
\end{proof}

Let $T$ be a non-abelian finite simple group with a Sylow 2-subgroup $S$ and let $G$ be the permutation representation of  $T$ acting on the set of right cosets of $S$. Then $G$ is quasiprimitive. Suppose that  $G$ is (permutationally isomorphic to) a  vertex-transitive group of automorphisms of a connected locally quasiprimitive graph, $\Gamma$ say. Since the local action is at the same time quasiprimitive, and induced by $S$, a 2-group, the local action must be cyclic of order two. Thus $\Gamma$ must be a cycle, and hence $G$ cannot act faithfully. Thus the   above lemma has no converse.

A finite permutation group $L$ is said to be \emph{graph-restrictive}  \cite{verret} if there exists a constant $c=c(L)$ such that for every locally $L$ pair $(\Gamma,G)$ we have $|G_x|\leqslant c$. In this language, a conjecture of  Weiss \cite{weissc} states that every finite primitive permutation group is graph-restrictive. The conjecture was  generalised by Praeger \cite{praegerc},  replacing primitive by quasiprimitive.   Poto\v{c}nik, Spiga and Verret \cite{psv} have conjectured that a finite permutation group is graph-restrictive if and only if it is semiprimitive and have shown that every graph-restrictive group is semiprimitive. Below we show that this conjecture is true for semiprimitive groups of the type appearing in case (b) of Theorem~\ref{thm:classification}. Recall that for a prime $p$ and a finite group $X$,   $\mathrm O_p(X)$ is the largest normal $p$-subgroup of $X$.

\begin{lem}
\label{lem:two plinths pcore 1}
Suppose that $L \leqslant \mathrm{Sym}(\Omega)$ is a finite semiprimitive group with at least two plinths and let $\omega \in \Omega$. Then for all primes $p$ we have $\mathrm O_p(L_\omega)=1$.
\end{lem}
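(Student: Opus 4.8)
The plan is to reduce to a primitive quotient of known O'Nan--Scott type and read off the structure of the point-stabiliser from there. First I would note that, since $L$ has at least two plinths, Lemma~\ref{solplinthsreg}(i) forces every plinth of $L$ to be regular (a non-regular plinth would be the unique plinth). Fix two distinct plinths $K$ and $M$. As both are regular, $|K| = |\Omega| = |M|$, so $K \cap M = K$ would give $K \leqslant M$ and hence $K = M$; thus $K \cap M$ is a proper subgroup of each and so is intransitive. Set $\ba{L} = L/(K \cap M)$. By Lemma~\ref{lem: quotient of sp is sp}, $\ba{L}$ acts faithfully and semiprimitively on the set of $(K\cap M)$-orbits, and if $\delta$ is the orbit containing $\omega$ then $L_\omega \cong \ba{L}_\delta$.

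The key structural input is Theorem~\ref{thm:mult plinths}: since $K\cap M$ is intransitive, $\ba{L}$ is a finite primitive group of type HS or HC. I would then invoke Lemma~\ref{lem:prim of type hs hc} to conclude that $\mathrm{soc}(\ba{L}_\delta) \cong S^\ell$ for some non-abelian simple group $S$ and some integer $\ell$. Transporting this isomorphism across $L_\omega \cong \ba{L}_\delta$ gives $\mathrm{soc}(L_\omega) \cong S^\ell$, a direct product of non-abelian simple groups; in particular $\mathrm{soc}(L_\omega)$ has trivial centre and no non-trivial abelian normal subgroup.

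It then remains to deduce $\mathrm O_p(L_\omega) = 1$ for every prime $p$ from the fact that the socle of $L_\omega$ is a product of non-abelian simple groups. Suppose otherwise; then $\mathrm O_p(L_\omega)$ is a non-trivial normal $p$-subgroup, so it contains a minimal normal subgroup $N$ of $L_\omega$. Being a minimal normal subgroup inside a $p$-group, $N$ is elementary abelian, and $N \leqslant \mathrm{soc}(L_\omega) \cong S^\ell$. But a normal subgroup of $L_\omega$ lying inside $\mathrm{soc}(L_\omega)$ is in particular normalised by $\mathrm{soc}(L_\omega)$, and the only abelian normal subgroup of a direct product of non-abelian simple groups is trivial; thus $N = 1$, a contradiction. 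Hence $\mathrm O_p(L_\omega) = 1$.

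I do not anticipate a serious obstacle: the argument is essentially an assembly of the preceding lemmas. The only point requiring a little care is the reduction itself -- the plinths $K$ and $M$ need not be minimal normal subgroups of $L$, so one cannot apply Lemma~\ref{lem:criterion for prim hs hc} directly to $L$; passing to $\ba{L} = L/(K\cap M)$ is exactly what makes the images of $K$ and $M$ minimal normal and transitive, putting us in the primitive HS/HC situation while leaving the isomorphism type of the point-stabiliser unchanged.
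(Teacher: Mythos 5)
Your proposal is correct and follows essentially the same route as the paper: pass to $\ba{L}=L/(K\cap M)$, apply Theorem~\ref{thm:mult plinths} to get a primitive group of type HS or HC, use Lemma~\ref{lem:prim of type hs hc} to identify $\mathrm{soc}(\ba{L}_\delta)$ as a product of non-abelian simple groups, and transport back via the isomorphism $L_\omega\cong\ba{L}_\delta$ from Lemma~\ref{lem: quotient of sp is sp}. The only difference is that you spell out the final deduction (a non-trivial $\mathrm O_p$ would force an abelian minimal normal subgroup inside $\mathrm{soc}(L_\omega)\cong S^\ell$, which is impossible) that the paper leaves implicit.
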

\begin{proof}
Let $K$ and $R$ be plinths of $L$ and let $\ba{L} = L / K \cap R$. Let $\Delta$ be the set of $(K \cap R)$-orbits so that $\ba{L}$ acts primitively on $\Delta$ of type HS or HC by Theorem~\ref{thm:mult plinths}. For $\delta \in \Delta$,  Lemma~\ref{lem:prim of type hs hc} shows that   $\mathrm O_p(\ba{L}_\delta) =1$ for all primes $p$. For $\omega \in \delta$ we have  $\ba{L}_\delta = \ba{(K\cap R) L_\omega}=\ba{L_\omega}$. Since $K\cap R$ is semiregular, Lemma~\ref{lem: quotient of sp is sp} shows that  $L_\omega \cong \ba{L_\omega}$  and we are done.
\end{proof}

For a finite group $X$, the generalised Fitting subgroup, $F^*(X)$, is the product of the layer of $X$ (defined before Lemma~\ref{lem:non-perfect plinth subgroup}) and the Fitting subgroup of $X$ (the largest normal nilpotent subgroup). We refer the reader to \cite[Chapter 9]{Isaacs} for properties that we use below.

\bigskip

\emph{Proof of Theorem~\ref{thm:intro-grp}.}
Let $L \leqslant \mathrm{Sym}(\Omega)$ be a finite semiprimitive group with at least two plinths. 
Let $(\Gamma,G)$ be a locally $L$ pair and let $\{x,y\}$ be an edge of $\Gamma$. We prove that $G_{xy}^{[1]}=1$. Assume for a contradiction that this is false. Then \cite[Corollary 2]{pablo} shows that there is a prime $p$ such that $G_{xy}^{[1]}$  and $F^*(G_{xy})$ non-trivial are $p$-groups. If $F^*(G_{xy}) \leqslant G_x^{[1]} $, then we have that $F^*(G_{xy}) = G_x^{[1]}$ and so \cite[Lemma 3.1(a)]{lukemichael3n2} would imply that $F^*(G_{xy})=1$, a contradiction. Hence $\mathrm O_p(G_{xy}^{\Gamma(x)}) \neq 1$. On the other hand, $G_x^{\Gamma(x)}$ is permutationally isomorphic to $L$, and so Lemma~\ref{lem:two plinths pcore 1} implies $\mathrm O_p(G_{xy}^{\Gamma(x)})=1$, a contradiction. Hence $G_{xy}^{[1]}=1$ as required. \hfill \qed

\section{Semiprimitive groups from wreath products}

\label{sec:wreath}

Standard constructions of permutation groups come from wreath products, in either the imprimitive action or the product action. In this section we seek to determine necessary and sufficient conditions for a wreath product to be semiprimitive. First we set out our notation.

Let $\Delta$ and $I$ be (possibly infinite) sets. We view $\Delta^I$ as the set of functions $f: I \rightarrow  \Delta$. For $\delta \in \Delta$ we denote by
$\res_\delta(\Delta^I)$
the set of functions $f$ such that $f(i) = \delta$ for all but finitely many elements  $i\in I$.

Let $T$ be a \emph{non-trivial} permutation group on $I$. Then $T$ acts naturally on $\Delta^I$ via $f^t(i):=f(it^{-1})$. For $t\in T$ we define  $\mathrm{supp}(t):=\{i \in I \mid it \neq i \}$. The subgroup consisting of elements of finite support is the group $\res(T) = \{ t\in T \mid |\mathrm{supp}(t)| < \infty \}$. In fact $\res(T)$ is a normal subgroup of $T$ (equal to $T$ if $I$ is finite).

If $M$ is a group  then $M^I$ (the set of functions $f: I \rightarrow M$) acquires the structure of a group via $(fg)(i):=f(i)g(i)$. We define $\res(M^I) := \res _1(M^I)$ and note that $\res(M^I)$ is a normal subgroup of $M^I$. For a subgroup $R$  of $M^I$ and $i\in I$ we set $R(i) = \{ r(i) \mid r \in R \}$. Note that $R(i)$ is a subgroup of $M$.

Suppose now that $M$ is a \emph{transitive} subgroup of $\sym{\Delta}$. Then $M^I$ acts on $\Delta^I$ via 
$$f^h(i): = f(i)^{h(i)}\quad  h\in M^I,\ f\in \Delta^I, \  i\in I.$$
 In this action, for $t\in T$, we have $f^{t^{-1} h t } = f^{h^t}$. Thus the groups $\res(M^I)$ and $M^I$ are  normalised by $T \leqslant \sym{\Delta^I}$ (as a subgroup of $\sym{\Delta^I}$). The \emph{unrestricted wreath product} is:
$$ M \Wr T : = \langle M^I, T \rangle = M^I \rtimes T \leqslant \sym{\Delta^I}.$$

Fix $\delta \in \Delta$ and define $f_\delta \in \Delta^I$ by $f_\delta(i)= \delta$ for all $i\in I$. Now $ \res_\delta(\Delta^I)$ is the orbit of $\res(M^I)$ containing $f_\delta$. The \emph{restricted wreath product} is:
$$ M \wWr T : = \langle \res(M^I), T \rangle = \res(M^I) \rtimes T \leqslant \sym{\res_\delta(\Delta^I)}.$$

For $J \subseteq I$ we define 
\begin{eqnarray*}M_J  & = &  \{ f\in M^I \mid f(i)=1 \ \text{ for all } \ i \notin J\}.
\end{eqnarray*}
 If $J=\{i\}$ we  write  $M_i$ in place of  $M_J$. Note that for finite subsets $J \subseteq I$ we have $M_J \leqslant \res(M^I)$ and if $J$ and $J'$ are two disjoint subsets of $I$ then $[M_J,M_{J'}]=1$.


The following result  is folklore.

\begin{lem}
\label{lem:normal subgroups of perfect groups}
Suppose that $M$ is a perfect  group. If $D$ is a normal subgroup of $\res(M^I)$ such that $D(i) = M$ for each $i \in I$, then  $D=\res(M^I)$.
\end{lem}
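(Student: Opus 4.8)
The plan is to reduce everything to showing that each single-coordinate factor $M_i$ (for $i \in I$) lies inside $D$. Since every element of $\res(M^I)$ has finite support and is therefore a product of its coordinate-restrictions, we have $\res(M^I) = \langle M_i : i \in I \rangle$; so once $M_i \leqslant D$ for all $i$, the equality $D = \res(M^I)$ follows at once.

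To prove $M_i \leqslant D$, fix $i \in I$ and analyse the commutator subgroup $[D, M_i]$. First I would note that $M_i$ is a normal subgroup of $\res(M^I)$, being one of the direct factors (equivalently, it commutes with every $M_j$ for $j \neq i$), so $[D, M_i] \leqslant M_i$; and since $D$ is normal in $\res(M^I)$, also $[D, M_i] \leqslant D$. Hence $[D, M_i] \leqslant D \cap M_i$, and it suffices to show that in fact $[D, M_i] = M_i$.

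The identification of $[D, M_i]$ is done coordinate-wise. For $d \in D$ and $x \in M_i$, the commutator $[d, x]$ is supported on $\{i\}$: at any coordinate $j \neq i$ we have $x(j) = 1$, so $[d, x](j) = 1$, while at coordinate $i$ its value is $[d(i), x(i)]$. Letting $\pi_i$ denote the projection to the $i$-th coordinate, which restricts to an isomorphism $M_i \to M$, the subgroup $\pi_i([D, M_i])$ of $M$ is generated by all the elements $[d(i), x(i)]$. As $d$ ranges over $D$ the hypothesis $D(i) = M$ ensures $d(i)$ ranges over all of $M$, and $x(i)$ likewise ranges over $M$; so these commutators generate $[M, M]$, which is $M$ because $M$ is perfect. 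Thus $\pi_i([D, M_i]) = M = \pi_i(M_i)$, and injectivity of $\pi_i$ on $M_i$ gives $[D, M_i] = M_i$. Combining, $M_i = [D, M_i] \leqslant D$ for every $i$, so $D = \res(M^I)$. The argument is short, and the only step needing care is this coordinate-wise commutator computation: one must use simultaneously that $M_i$ is trivial off coordinate $i$, that the projection $D(i)$ is full, and the perfectness of $M$ to pass from individual commutators to the whole group. This is the sole point where the three hypotheses combine, and I expect it to be the only real obstacle.
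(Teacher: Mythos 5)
Your proof is correct and follows essentially the same route as the paper's: both arguments compute the commutator of an element $d\in D$ with $d(i)=c$ against an element of $M_i$, observe that the result is supported at $i$ with value $[c,m]$, and then invoke perfectness of $M$ to conclude $M_i\leqslant D$ for every $i$, hence $D=\res(M^I)$. Your packaging of the computation as the identity $[D,M_i]=M_i$ is a slightly tidier presentation of the identical idea, so there is nothing to correct.
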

\begin{proof}
For $m\in M$ let $f_m\in M_i$ be such that $f_m(i)=m$. Now for each $c\in M$ there is $d\in D$ such that $d(i)=c$. For $j\neq i$ we have  $[d,f_m](j) = 1$ and $[d,f_m](i)= f_{[c,m]}=[f_c,f_m]$. Since $M \cong M_i$ we have $M_i = [M_i,M_i]$ and so $M_i \leqslant D$. In particular, $\res(M^I) \leqslant D$, and so $D=\res(M^I)$.
\end{proof}

\begin{lem}
If $M \Wr T$ or $M \wWr T$ is semiprimitive, then  $M$ is semiprimitive on $\Delta$.
\end{lem}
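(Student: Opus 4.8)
The plan is to prove the contrapositive: assuming $M$ is \emph{not} semiprimitive on $\Delta$, I would exhibit a normal subgroup of each wreath product that is neither transitive nor semiregular, contradicting semiprimitivity. So suppose $N$ is a normal subgroup of $M$ that is neither transitive nor semiregular on $\Delta$. In particular $N\neq 1$, there are points $\delta_1,\delta_2\in\Delta$ lying in distinct $N$-orbits, and there is a point $\delta_0\in\Delta$ with $N_{\delta_0}\neq 1$. The central idea is simply to lift the ``bad'' subgroup $N$ to the coordinatewise power sitting inside the base group of the wreath product.

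For the unrestricted wreath product I would take $D=N^I\leqslant M^I$. Since $N$ is normal in $M$, the subgroup $D$ is normal in $M^I$, and since $T$ acts on $\Delta^I$ by permuting coordinates while $N$ occupies every coordinate identically, $T$ normalises $D$ as well; hence $D$ is normal in $M\Wr T$. The orbit of $f\in\Delta^I$ under $D$ is $\{g\mid g(i)\in f(i)^N\text{ for all }i\in I\}$, so the constant functions $f_{\delta_1}$ and $f_{\delta_2}$ lie in distinct $D$-orbits and $D$ is intransitive. On the other hand the stabiliser of $f_{\delta_0}$ in $D$ is $(N_{\delta_0})^I$, which is nontrivial, so $D$ is not semiregular. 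This contradicts the semiprimitivity of $M\Wr T$.

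For the restricted wreath product I would run the identical argument with $D=\res(N^I)$, which is normal in $\res(M^I)\rtimes T=M\wWr T$ for the same two reasons (and $T$ preserves finite support). Intransitivity follows because no element of $\res(N^I)$ can send $f_\delta$ to a function taking a value outside $\delta^N$, while $\delta^N\neq\Delta$ since $N$ is intransitive. For non-semiregularity the one point needing care is the choice of base point: $f_{\delta_0}$ need not lie in $\res_\delta(\Delta^I)$, so instead I would take the function $g\in\res_\delta(\Delta^I)$ equal to $\delta$ everywhere except $g(i_0)=\delta_0$ at a single coordinate $i_0$. Then any element of $\res(N^I)$ supported on $\{i_0\}$ with value in $N_{\delta_0}\setminus\{1\}$ fixes $g$, so the stabiliser of $g$ in $D$ is nontrivial, again contradicting semiprimitivity.

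The construction is routine once the subgroup $D$ is identified, and I do not anticipate a serious obstacle. The only two points requiring attention are the verification that $D$ is genuinely normal in the \emph{full} wreath product (which rests on $N$ being placed uniformly across coordinates so that the coordinate-permuting action of $T$ fixes $D$ setwise) and, in the restricted case, the selection of a base point inside $\res_\delta(\Delta^I)$ that witnesses the failure of semiregularity.
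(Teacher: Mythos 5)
Your proof is correct and follows essentially the same route as the paper: lift the intransitive non-semiregular normal subgroup $N$ to $N^I$ and $\res(N^I)$ and check that these are normal, intransitive and non-semiregular in the respective wreath products. The only cosmetic difference is that the paper observes $N_\delta\neq 1$ at the base point directly (since $N$ is normal in the transitive group $M$, all its point stabilisers are conjugate), whereas you sidestep this by moving $\delta_0$ into a single coordinate of a function in $\res_\delta(\Delta^I)$; both are fine.
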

\begin{proof}
Suppose that $M$ is not semiprimitive and let $N$ be an intransitive non-semiregular normal subgroup of $M$. We claim that the normal subgroups $\res(N^I)$ and $N^I$ are both intransitive and non-semiregular.

Since $M$ is transitive and $N$ is normal and non-semiregular, we have   $N_\delta \neq 1$. Now 
$1 \neq \res( (N_\delta ) ^I) \leqslant \res(N^I)_{f_\delta}$ so both $\res(N^I)$ and $N^I$ are non-semiregular. Further, pick $\beta \notin \delta^N$ and $i\in I$. Define $f \in \Delta^I$ by $f(j)= \delta$ for $j\neq i$ and $f(i)=\beta$. Then $f\in \res_\delta(\Delta^I)$ and $f_\delta$ and $f$ lie in different $N^I$-orbits. Hence both $\res(N^I)$ and $N^I$ are intransitive.
\end{proof}

We now focus on the case where $M$ is semiprimitive. The first class of groups to deal with is the class of regular groups. Set $G=M \wWr T$.

\begin{lem}
Suppose that  $M$ is regular and perfect. Then $G$ is semiprimitive if and only if  $T$ acts faithfully on each $T$-orbit on $I$.
\end{lem}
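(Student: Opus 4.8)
The plan is to exploit the fact that, since $M$ is regular on $\Delta$, the base group $B:=\res(M^I)$ acts regularly on $\res_\delta(\Delta^I)$. Thus $B$ is a regular normal subgroup, and hence a plinth, of $G=B\rtimes T$, with point-stabiliser $G_{f_\delta}=T$; and since $G$ acts faithfully, $T$ is core-free. This reduces the statement to a question about the action of $T$ on the quotients of $B$, which I would handle via Lemma~\ref{lem:g is semi prim} and Lemma~\ref{lem:sp iff h faithful on k/y}, keeping in mind throughout that $B$ is perfect because $M$ is.

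For the implication that orbit-faithfulness yields semiprimitivity, I would invoke Lemma~\ref{lem:g is semi prim}: it suffices to prove $[B,N]=B$ for every non-trivial normal subgroup $N$ of $T$. Fix such an $N$ and an index $i\in I$, and let $O$ be the $T$-orbit of $i$. The hypothesis that $T$ acts faithfully on $O$ forces $N$ to act non-trivially on $O$ (otherwise $N\leqslant\ker(T\to\sym{O})=1$). Since the fixed-point set $\mathrm{Fix}_O(N)$ is $T$-invariant and $T$ is transitive on $O$, this set is empty, so there is $n\in N$ with $in\neq i$. Taking $b\in M_i$ with prescribed value $b(i)=m$, a direct computation using $(b^n)(k)=b(kn^{-1})$ gives that $[b,n]=b^{-1}b^n$ is supported on $\{i,in\}$ with $[b,n](i)=m^{-1}$; letting $m$ range over $M$ shows $[B,N](i)=M$. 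As this holds for every $i\in I$, and $[B,N]$ is a normal subgroup of the perfect group $B$, Lemma~\ref{lem:normal subgroups of perfect groups} yields $[B,N]=B$, whence $G$ is semiprimitive.

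For the converse I would argue by contraposition. Suppose $T$ fails to act faithfully on some orbit $O$, and put $1\neq K=\ker(T\to\sym{O})$. The subset $I\setminus O$ is $T$-invariant, so $Y:=\{f\in B\mid \mathrm{supp}(f)\subseteq I\setminus O\}$ is a normal subgroup of $G$ properly contained in $B$, and $B/Y\cong\res(M^O)$ as a $T$-group. Since $K$ fixes every index in $O$, it centralises $\res(M^O)$ and hence acts trivially on $B/Y$; as $K\neq 1$, the point-stabiliser $T$ does not act faithfully on $B/Y$, so Lemma~\ref{lem:sp iff h faithful on k/y} shows that $G$ is not semiprimitive.

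The main obstacle is the commutator analysis in the forward direction: one must pin down, for each coordinate $i$, an element of $N$ moving $i$ — which is exactly where orbit-faithfulness and the $T$-invariance of fixed-point sets enter — and then combine the resulting coordinatewise surjectivity with the perfectness of $M$ through Lemma~\ref{lem:normal subgroups of perfect groups}. Everything else is bookkeeping about supports in the restricted wreath product.
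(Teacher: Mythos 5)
Your proof is correct and follows essentially the same route as the paper: both directions reduce, via Lemma~\ref{lem:g is semi prim} and Lemma~\ref{lem:sp iff h faithful on k/y}, to the commutator condition on the regular plinth $\res(M^I)$, and your converse (the subgroup supported off a non-faithful orbit, centralised by the kernel) is exactly the paper's. The only divergence is cosmetic: in the forward direction the paper pins down $M_j\leqslant N$ for an arbitrary non-semiregular normal subgroup $N$ of $G$ by an explicit triple-commutator computation, whereas you compute the coordinate projections $[B,N](i)=M$ and close with Lemma~\ref{lem:normal subgroups of perfect groups} --- a clean and equally valid way to exploit perfectness.
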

\begin{proof}
Since $M$ is regular, we have  $G_{f_\delta} = T$ and $\res(M^I)$ is a regular plinth of $G$. Thus, by Lemma~\ref{lem:sp iff h faithful on k/y}, $G$ is semiprimitive if and only if $[\res(M^I),S]=\res(M^I)$ for each non-trivial normal subgroup $S$ of $T$. 

Suppose first that $J \subseteq I$ is a $T$-orbit and that  $S$, the kernel of the action of $T$ on $J$,  is non-trivial. Note $J \neq I$ since $T$ is non-trivial and acts faithfully on $I$.
 We calculate that for $f\in M^I$ and $t\in S$ we have $[f,t] \in M_J$. Hence
$[M^I,S] \leqslant  M_J < M^I$.
This yields $[\res(M^I),S] \leqslant M_J \cap \res(M^I) < \res(M^I)$. Thus Lemma~\ref{lem:sp iff h faithful on k/y} shows that $G$ is not semiprimitive.

Suppose now that $T$ is orbit faithful and let  $N$ be a normal subgroup of $G$ that is not semiregular. Since $M$ is regular,  $R:=N \cap G_{f_\delta} = N \cap T \neq 1$. Let $J$ be the set of fixed points of $R$ on $I$. Since $R$ is a normal subgroup of $T$,  $J$ is a union of $T$-orbits and  $R$ acts trivially on each $T$-orbit on $J$. Since $T$ is orbit faithful, we conclude that $J= \emptyset$. Let $i\in I$ be arbitrary. Then there is $\sigma \in R$ such that $i\sigma= j \neq i$. Let $m,m' \in M_i$ and note that $x:=[[m,m'],\sigma] \in N$. Now $x=[m,m']^{-1}[g,g']$ where $g,g'\in M_j$ are the images of $m$ and $m'$ under conjugation by $\sigma$. Now let $g'' \in M_j$ be arbitrary. Then $[x,g''] = [g,g',g'']$ (since $M_i$ and $M_j$ commute) and so $[g,g',g''] \in N$ since $N$ is normal in $G$. Hence $[M_j,M_j,M_j] \leqslant N$. Since $M_j \cong M$ is perfect, we have  $[M_j,M_j,M_j] = M_j$. Hence $M_j \leqslant N$ for all $j \in i^R$.  Since our choice of $i$ was arbitrary, we have $\res(M^I) \leqslant N$ and thus $N$ is transitive, as required.
\end{proof}

\begin{lem}
Suppose that $M$ is regular and not perfect. Then $G$ is not semiprimitive.
\end{lem}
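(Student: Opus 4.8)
The plan is to exploit that, since $M$ is regular, $K := \res(M^I)$ is a regular (hence plinth) normal subgroup of $G = M \wWr T$ with point-stabiliser $G_{f_\delta} = T$, exactly as established in the regular-and-perfect case. Given this, it suffices by Lemma~\ref{lem:sp iff h faithful on k/y} to exhibit a single proper normal subgroup $Y$ of $G$ contained in $K$ on which $T$ fails to act faithfully. The hypothesis that $M$ is not perfect enters only through the non-trivial abelian group $A := M/[M,M]$, and the whole strategy is to build $Y$ so that $K/Y \cong A$ with $T$ acting trivially.

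Concretely, I would first fix one $T$-orbit $J \subseteq I$ and consider the map $\psi : K \to A$ given by $\psi(f) = \sum_{i \in J} \overline{f(i)}$, where $\overline{\,\cdot\,} : M \to A$ is the canonical projection and the sum is taken in $A$. Since every element of $\res(M^I)$ has finite support this is a finite sum, so $\psi$ is a well-defined homomorphism, and it is surjective because $\overline{\,\cdot\,}$ is surjective and we may use functions supported at a single point of $J$. The key observation is that $\psi$ is constant on $T$-orbits: for $t \in T$ one computes $\psi(f^t) = \sum_{i \in J} \overline{f(it^{-1})} = \psi(f)$, because $t^{-1}$ permutes the orbit $J$.

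Next I would set $Y = \ker\psi$. Then $Y$ is a normal subgroup of $K$, and it is proper precisely because $\psi$ surjects onto the non-trivial group $A$ — this is the only place where non-perfectness of $M$ is used. The $T$-invariance of $\psi$ gives that $Y$ is $T$-invariant, hence normal in $G = KT$, so $Y$ is a proper $G$-normal subgroup of $K$. Moreover $K/Y \cong A$ and $T$ acts trivially on this quotient: from $\psi(f^t) = \psi(f)$ we get $\psi([f,t]) = \psi(f^{-1}f^t) = 0$, so $[K,T] \leqslant Y$ and therefore $C_T(K/Y) = T \neq 1$. Thus $T$ does not act faithfully on $K/Y$, and Lemma~\ref{lem:sp iff h faithful on k/y} yields that $G$ is not semiprimitive.

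I expect no serious obstacle here; the only points needing a little care are the well-definedness and $T$-invariance of $\psi$ in the possibly infinite setting, both of which follow from the finite-support condition defining $\res(M^I)$ together with the fact that $T$ permutes each of its own orbits on $I$. I would emphasise, in contrast to the perfect case, that the conclusion holds for \emph{every} non-trivial $T$, since the subgroup $Y$ works uniformly and relies on no orbit-faithfulness property of $T$.
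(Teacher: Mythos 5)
Your argument is correct and is essentially the paper's own proof: both pass to the abelianisation $M/[M,M]$ (the paper uses an arbitrary proper normal $D$ with $M/D$ abelian) and take the kernel of the $T$-invariant ``sum of coordinates'' homomorphism, which is a proper $G$-normal subgroup of $\res(M^I)$ containing $[\res(M^I),T]$, so that Lemma~\ref{lem:sp iff h faithful on k/y} applies. The only cosmetic difference is that you sum over a single $T$-orbit while the paper sums over all of $I$; both work for the same reason.
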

\begin{proof}
Since $M$ is regular, $\res(M^I)$ is a plinth of $G$.  Since $M$ is not perfect, there is a proper normal subgroup $D$ such that $M/D$ is abelian. Then $\res(D^I)$ is normal in $G$ and $G/ \res(D^I) = \res(M^I)/\res(D^I) \rtimes T \cong  \res( (M/D)^I) \rtimes T$. 

Now  let $\overline{W} = \{ f\in \res( (M/D)^I) \mid \prod_{i\in I} f(i)=1\}$. (Note that $\overline{W}$ is well-defined since elements of $\res((M/D)^I)$ have finite support.) Clearly $\overline{W}$ is $T$-invariant, and since $M/D$ is abelian, we have $[\res((M/D)^I),T] \leqslant \overline{W}$. Let $W$ be the preimage in $M \wWr T$ of $\overline{W}$. Then $[\res(M^I),G_{f_\delta}] \leqslant W$. Hence $G$ is not semiprimitive by Lemma~\ref{lem:sp iff h faithful on k/y}.
%
\end{proof}

\begin{lem}
Suppose that $M$ is non-regular and that $T$ is intransitive. Then $G$ and $M \Wr T$ are not semiprimitive.
\end{lem}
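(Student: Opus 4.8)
The plan is to prove both claims simultaneously by exhibiting, in each of $G = M \wWr T$ and $M \Wr T$, a single normal subgroup that is neither transitive nor semiregular; the existence of such a subgroup immediately contradicts semiprimitivity. Since $T$ is non-trivial and intransitive, $I$ splits into at least two $T$-orbits, so I can fix a $T$-orbit $J$ with $\emptyset \neq J \subsetneq I$ (any proper non-empty $T$-invariant subset would do). The candidate normal subgroup is built from the coordinates in $J$: for the unrestricted wreath product take $N = M_J$, and for the restricted wreath product take $N = M_J \cap \res(M^I)$, the finitely supported functions supported on $J$.

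First I would check that $N$ is normal in the relevant wreath product. Because the supports of $M_J$ and $M_{I\setminus J}$ are disjoint, $M^I$ is the internal direct product $M_J \times M_{I\setminus J}$ (and likewise $\res(M^I) = (M_J \cap \res(M^I)) \times (M_{I\setminus J}\cap \res(M^I))$), so $N$ is normal in the base group. Since $J$ is $T$-invariant, a direct computation with the action $f^t(i) = f(it^{-1})$ shows $N$ is normalised by $T$ as well, whence $N$ is normal in $M^I \rtimes T$ (respectively $\res(M^I)\rtimes T$).

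Next I would show $N$ is intransitive. Every element of $N$ alters only coordinates lying in $J$, so the $N$-orbit of the base point $f_\delta$ is contained in $\{ f : f(i) = \delta \text{ for all } i \notin J\}$. Since $J \neq I$ and $|\Delta| \geqslant 2$ (as $M$ is transitive and non-trivial), choosing $i_1 \in I \setminus J$ and $\beta \neq \delta$ produces a point outside this set, and within $\res_\delta(\Delta^I)$ in the restricted case since it deviates from $f_\delta$ at a single coordinate; hence $N$ is intransitive. Finally, I would show $N$ is non-semiregular using non-regularity of $M$: here $M_\delta \neq 1$, so I can choose $h \in N$ supported at a single point $i_0 \in J$ with $h(i_0) \in M_\delta \setminus \{1\}$; then $h \neq 1$ while $f_\delta^{\,h}(i) = \delta^{h(i)} = \delta$ for all $i$, so $h$ fixes $f_\delta$. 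Thus $N$ is neither transitive nor semiregular, and both wreath products fail to be semiprimitive.

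The argument is routine; the only points requiring care are that $J$ must be genuinely $T$-invariant (which is why I take it to be a union of $T$-orbits rather than an arbitrary subset) so that $N$ is normalised by $T$, and that the same subgroup $N$ serves for both the restricted and unrestricted products once one intersects with $\res(M^I)$ in the former. The hypothesis that $M$ is non-regular is used exactly once, to guarantee $M_\delta \neq 1$ and hence the non-semiregularity of $N$.
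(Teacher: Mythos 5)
Your proof is correct and follows essentially the same route as the paper, which also takes the subgroup supported on a single $T$-orbit $J \subsetneq I$ (the paper writes it as $\res(M_J)$, serving both wreath products at once) and observes it is normal, intransitive, and non-semiregular because $M$ is non-regular. The extra details you supply — $T$-invariance of $J$ giving normality, and checking the witnessing points lie in $\res_\delta(\Delta^I)$ in the restricted case — are exactly the ones the paper leaves implicit.
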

\begin{proof}
Suppose that $J$ is an orbit of $T$. Then the subgroup $\res(M_J)$ is normal in both $G$ and $M \Wr T$. Since $M$ is not regular on $\Delta$,  this subgroup is not semiregular on $\Omega$, and so if $J \neq I$, then $G$ and $M \Wr T$ contain  an intransitive normal subgroup which is not semiregular. Hence $G$ and $M \Wr T$ are not semiprimitive.
\end{proof}

\begin{lem}
Suppose that $M$ is semiprimitive and  non-regular and suppose that $T$ is transitive. Then $G$ is semiprimitive. 
\end{lem}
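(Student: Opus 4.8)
The plan is to verify semiprimitivity directly from the definition: I take an arbitrary normal subgroup $N$ of $G:=M\wWr T$ that is not semiregular and show it is transitive on $\Omega:=\res_\delta(\Delta^I)$. First I record the basic data. Since $M$ is transitive on $\Delta$, the base group $\res(M^I)$ is already transitive on $\Omega$, so $G$ is transitive and the stabiliser of $f_\delta$ is $G_{f_\delta}=\res((M_\delta)^I)\rtimes T$, where $M_\delta$ denotes the stabiliser of $\delta$ in $M$; as $M$ is non-regular we have $M_\delta\neq 1$. Because $G$ is transitive, the hypothesis that $N$ is not semiregular means $N_{f_\delta}\neq 1$.

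The first step is to push a nontrivial element of $N$ into the base group while keeping it in the stabiliser of $f_\delta$. Given $1\neq (h_0,t)\in N_{f_\delta}$ (so $h_0\in\res((M_\delta)^I)$), if $t=1$ then $h_0$ is already such an element. If $t\neq 1$, I pick a coordinate $i_0$ moved by $t$ and a nontrivial $m\in M_\delta$ (available since $M_\delta\neq 1$), and consider the commutator $[(h_0,t),f_m]$ with $f_m\in M_{i_0}$. As $N$ and $\res(M^I)$ are both normal in $G$, this commutator lies in $N\cap\res(M^I)$, and a direct coordinatewise computation shows that it is supported on $\{i_0,i_0t\}$ with value $m$ at $i_0$ and a value in $M_\delta$ at $i_0t$; since $i_0\neq i_0t$ it is a nontrivial element $h$ of $N\cap\res((M_\delta)^I)$.

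The heart of the argument is to convert this into a full transitive factor of $N$ at a single coordinate. Fix $i_0$ with $h(i_0)\neq 1$, and set $B:=N\cap\res(M^I)\trianglelefteq G$ and $B_*:=B(i_0)$; conjugating elements of $B$ by single-coordinate elements $f_m\in M_{i_0}$ shows $B_*\trianglelefteq M$. Then $R:=B_*\cap M_\delta$ is a normal subgroup of $M_\delta$ containing $h(i_0)$, so $R\neq 1$, and Lemma~\ref{lem:[g,r] trans} (applied to the semiprimitive group $M$) shows that $[M,R]$ is a transitive normal subgroup of $M$. On the other hand, writing $D:=(N\cap M_{i_0})(i_0)$, for each $r\in R$ (realised as $r=b(i_0)$ for some $b\in B$) and each $m\in M$ the commutator $[b,f_m]=f_{[r,m]}$ lies in $N\cap M_{i_0}$; hence $D$ contains every $[r,m]$, so $D\supseteq[M,R]$ and therefore $D$ is transitive.

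Finally I spread $D$ across all coordinates, which is the only point where transitivity of $T$ is used: conjugating $N\cap M_{i_0}$ by elements of $T$ gives $(N\cap M_j)(j)=D$ for every $j\in I$, and since these single-coordinate subgroups lie in $N$ and pairwise commute, their restricted product $\res(D^I)$ is contained in $N$. As $D$ is transitive on $\Delta$, the subgroup $\res(D^I)$ is transitive on $\Omega$, whence $N$ is transitive, as required. I expect the third step to be the main obstacle: a normal subgroup $N$ meeting a point stabiliser only tells us, a priori, that $N$ meets some point stabiliser of $M$ in each coordinate, and the genuine work is to upgrade this to a \emph{transitive} subgroup realised inside $N$ at a single coordinate. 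This is precisely what the commutator computation feeding into Lemma~\ref{lem:[g,r] trans} accomplishes, and it is also the only place where semiprimitivity of $M$ (rather than mere transitivity) is invoked.
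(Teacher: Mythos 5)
Your proof is correct and follows essentially the same route as the paper's: both extract a nontrivial normal subgroup $R$ of $M_\delta$ from $N$, apply Lemma~\ref{lem:[g,r] trans} to obtain a transitive normal subgroup $D=[M,R]$ of $M$, and use single-coordinate commutators to conclude $\res(D^I)\leqslant N$. The only differences are presentational: you justify explicitly, via the commutator $[(h_0,t),f_m]$, that $N$ meets $\res((M_\delta)^I)$ nontrivially (a step the paper asserts with less detail), and you invoke the transitivity of $T$ at the end to spread $D$ over all coordinates rather than in the middle to identify $R(i)$ with $R(j)$.
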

\begin{proof}
Let $N$ be a normal subgroup of $G$ that is non-semiregular. We aim to show that $\res(D^I) \leqslant N$, and thereby prove that $N$ is transitive. 

Let $\omega = f_\delta$. Then $S:= N \cap G_\omega \neq 1$. Since $G_\omega = M_\delta \wWr T$, we have that $R:= S \cap \res((M_\delta)^I) \neq 1$. 
 Since $T$ is transitive, for any $i,j \in I$ we have that $R(i)=R(j) = R_0$ for some non-trivial normal subgroup $R_0$ of $M_\delta$.  Set $D=[R_0,M]$ and note that $D$ is a transitive normal  subgroup of $M$ by Lemma~\ref{lem:[g,r] trans}.

By the normality of $N$ in $G$, we have that $N$ contains $[\res(M^I), N]$ and therefore contains $[\res(M^I),R]$. Let $i\in I$ and $m\in M$ and define $f_m\in M_i$ such that $f_m(i)=m$, note that $f_m\in \res(M^I)$. Now for any $r\in R_0$ we have that there is  $f \in R$ such that $f(i) = r$. Now
$$[ f_m , f]= f_{[m,f(i)]} = f_{[m,r]}.$$
Thus $[M_i,R](i)$ contains $D$ and so $[\res(M^I), R ] \geqslant \res(D^I)$. Since $D$ is a transitive normal subgroup of $M$ we have that $\res(D^I)$ is transitive on $\res_\delta(\Delta^I)$.
Thus $[\res(M^I),R]$ is transitive as required.
%
\end{proof}

In summary, for the restricted wreath product we have:

\begin{thm}
\label{thm:wreath products}
The product action of $G=M \wWr T$ is semiprimitive if and only if either
\begin{itemize}
\item $M$ is semiprimitive and non-regular, and $T$ is transitive, or 
\item $M$ is regular and perfect and $T$ acts faithfully on each $T$-orbit.
\end{itemize}
\end{thm}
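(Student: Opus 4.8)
The plan is to assemble the theorem directly from the five lemmas proved earlier in this section, since between them they settle every configuration of the hypotheses on $M$ and $T$. The only real work is an organizational case analysis confirming that the two listed alternatives are exhaustive and mutually exclusive.

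For sufficiency I would invoke the two lemmas that produce semiprimitive wreath products. If $M$ is semiprimitive and non-regular and $T$ is transitive, then the lemma treating precisely this configuration gives that $G$ is semiprimitive. If instead $M$ is regular and perfect and $T$ acts faithfully on each $T$-orbit, then the forward implication of the lemma on regular perfect $M$ gives that $G$ is semiprimitive.

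For necessity, suppose $G$ is semiprimitive. The first lemma of the section shows that $M$ is itself semiprimitive on $\Delta$, so I need only determine the remaining constraints. I would then split according to whether $M$ is regular. If $M$ is non-regular, then the lemma asserting that non-regular $M$ together with intransitive $T$ yields a non-semiprimitive group forces $T$ to be transitive, placing us in the first alternative. If $M$ is regular, then the lemma asserting that regular non-perfect $M$ yields a non-semiprimitive group forces $M$ to be perfect, and the reverse implication of the regular-perfect lemma then forces $T$ to act faithfully on each $T$-orbit, placing us in the second alternative. Since a regular group is automatically semiprimitive, no extra hypothesis on $M$ is required in this branch.

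There is no genuine mathematical obstacle here, as the statement is a summary of the section's lemmas. The one point demanding care is to verify that the regularity dichotomy on $M$ is exhaustive and that the two exclusion lemmas — non-regular $M$ with intransitive $T$, and regular non-perfect $M$ — eliminate exactly those configurations absent from the two stated alternatives, so that the equivalence is both sound and complete.
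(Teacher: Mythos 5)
Your proposal is correct and is exactly what the paper intends: the theorem is stated as a summary ("In summary, for the restricted wreath product we have...") of the five preceding lemmas, and your case analysis — sufficiency from the two positive lemmas, necessity by first applying the lemma that $M$ must be semiprimitive and then splitting on regularity of $M$ to invoke the two exclusion lemmas and the reverse direction of the regular-perfect lemma — is the intended assembly. Your observation that regularity of $M$ makes the semiprimitivity hypothesis on $M$ redundant in the second alternative is also the right reading of why that bullet omits it.
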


In the unrestricted case, we offer the following.

\begin{lem}
Suppose that $I$ is infinite and either $\res(T) \neq 1$ or $M$ is non-regular. Then $M\Wr T$ is not semiprimitive.
\end{lem}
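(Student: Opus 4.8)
The plan is to exhibit a single normal subgroup of $M \Wr T$ that is simultaneously intransitive and non-semiregular on $\Delta^I$, namely
$$N := \res(M^I)\,\res(T).$$
(Here, as throughout the section, $M$ is non-trivial, so that $|\Delta| \geqslant 2$; if $|\Delta| = 1$ the action is on a single point and the statement is vacuous.) Since $\res(T) \leqslant T$ normalises $\res(M^I)$, the product $N$ is indeed a subgroup, and to see that it is normal in $M\Wr T = \langle M^I, T\rangle$ it suffices to check that it is normalised by $M^I$ and by $T$. Normality under $T$ is immediate, since $T$ normalises both $\res(M^I)$ (the support of $h^t$ is the $t$-translate of $\mathrm{supp}(h)$, hence finite when $h$ has finite support) and $\res(T)$ (which is normal in $T$).

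The key computation is that $M^I$ normalises $N$, and I expect this to be the only genuinely technical step. For $h \in M^I$ and $t \in \res(T)$, using the convention $t^{-1}ht = h^t$, one has $h^{-1}th = (h^{-1}h^{t^{-1}})\,t$, and since $(h^{-1}h^{t^{-1}})(i) = h(i)^{-1}h(it)$ is trivial whenever $it = i$, the element $h^{-1}h^{t^{-1}}$ is supported on the finite set $\mathrm{supp}(t)$ and therefore lies in $\res(M^I)$. Hence $h^{-1}th \in \res(M^I)\,\res(T) = N$, and since $\res(M^I) \leqslant N$ is visibly $M^I$-invariant, we conclude $h^{-1}Nh = N$. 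Everything else is bookkeeping with supports.

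It remains to verify the two defining properties of $N$. For intransitivity, note that both $\res(M^I)$ and $\res(T)$ preserve the set $\res_\delta(\Delta^I)$ containing the constant function $f_\delta$: for $\res(M^I)$ this is by definition of the orbit, while for $t \in \res(T)$ the function $f^t$ disagrees with $\delta$ only on the $t$-translate of the (finite) set where $f$ does. As $I$ is infinite and $|\Delta| \geqslant 2$, we have $\res_\delta(\Delta^I) \neq \Delta^I$, so $N$ stabilises a proper non-empty subset setwise and is not transitive; this is exactly where infiniteness of $I$ is used. For the failure of semiregularity, the stabiliser $N_{f_\delta}$ contains both $\res(T)$ (which fixes the constant function $f_\delta$) and $\res((M_\delta)^I)$ (each of whose elements fixes $f_\delta$, since $f_\delta(i)^{h(i)} = \delta^{h(i)} = \delta$ for $h(i) \in M_\delta$). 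The disjunctive hypothesis now guarantees that at least one of these is non-trivial: if $\res(T) \neq 1$ then $N_{f_\delta} \neq 1$ directly, while if $M$ is non-regular then $M_\delta \neq 1$ and hence $\res((M_\delta)^I) \neq 1$. In either case $N$ is a normal subgroup that is neither transitive nor semiregular, so $M \Wr T$ is not semiprimitive.
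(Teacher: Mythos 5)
Your proof is correct and takes essentially the same route as the paper's: both exhibit $N=\res(M^I)\res(T)$ as a normal subgroup of $M \Wr T$ that is intransitive and non-semiregular, with the same finite-support computation showing $[M^I,\res(T)]\leqslant \res(M^I)$ and hence normality. You merely spell out the intransitivity (via the invariant proper subset $\res_\delta(\Delta^I)$) and non-semiregularity checks that the paper leaves terse.
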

\begin{proof}
Let $t\in \res(T)$, $h\in M^I$ and suppose $i\notin \mathrm{supp}(t)$. Then 
$$[h,t](i) = [h^{-1}h^t](i) = h^{-1}(i) h^t(i) =h^{-1}(i)  h(it^{-1}) = h^{-1}(i)h(i) = 1.$$
Since $\mathrm{supp}(t)$ is finite, we have $[\res(T),M^I] \leqslant \res(M^I)$. In particular, $\res(M^I)\res(T)$ is a normal subgroup of $M \Wr T$ that is non-regular (since either $\res(T) \neq 1$ or $\res(M^I)$ is non-regular) and intransitive on $\Delta^I$. Thus $M \Wr T$ is not semiprimitive.
\end{proof}

In particular, a group not handled by the above lemma is $\sym{2} \Wr \mathbb Z$.

\section{Properties of semiprimitive groups}

\label{sec:properties}

\subsection{Semiprimitive groups containing cycles of odd prime length}
A well-known result due to Jordan states that a primitive permutation group of degree $n$ containing a cycle of  prime length $p$ with  $p\leqslant n-3$ must contain $\mathrm{Alt}(n)$.  This result was extended to finite quasiprimitive  and finite innately transitive groups in \cite{praegershalev} and \cite{bamberg} respectively. Here we show that the same result holds in the context of semiprimitive groups (of arbitrary cardinality).

\begin{lem}
\label{lem:p cycles}
Let $G \leqslant \mathrm{Sym}(\Omega)$ be semiprimitive and suppose that $G$ contains a $p$-cycle for some  prime $p$. Then $G$ is  primitive.
\end{lem}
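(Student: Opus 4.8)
The plan is to argue by contradiction. Suppose $G$ is imprimitive and fix a non-trivial $G$-invariant partition $\Delta$ of $\Omega$ into blocks of common size $b\geqslant 2$. Write $g$ for the given $p$-cycle and let $\Sigma=\mathrm{supp}(g)$ be the set of $p$ points that $g$ moves, so $\Omega\setminus\Sigma$ is the set of points it fixes. A preliminary reduction disposes of the degenerate case $|\Omega|=p$: a transitive group of prime degree is automatically primitive (block sizes divide $p$), so we may assume $|\Omega|>p$ and hence that $g$ fixes at least one point. The kernel $N$ of the action of $G$ on $\Delta$ is a normal subgroup of $G$ which is intransitive (there is more than one block), so by semiprimitivity, and as recorded in Lemma~\ref{lem: quotient of sp is sp}, $N$ is semiregular; in particular every non-identity element of $N$ is fixed-point-free.

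Next I would analyse how $g$ permutes the blocks. Since $\Sigma$ is a single $\langle g\rangle$-orbit of size $p$ and $g$ has prime order $p$, the blocks meeting $\Sigma$ are permuted by $g$ in orbits of length $1$ or $p$. If some such block were fixed by $g$, it would contain the whole orbit $\Sigma$, so it would be the unique block meeting $\Sigma$ and $g$ would fix every block, i.e.\ $g\in N$; but $g\neq 1$ has a fixed point, contradicting the semiregularity of $N$. Hence no block meeting $\Sigma$ is $g$-fixed, and a counting argument (the blocks are disjoint, each meets $\Sigma$, and the $g$-orbits among them have length $p$ while $|\Sigma|=p$) forces exactly $p$ blocks $B_1,\dots,B_p$ to meet $\Sigma$, each in a single point, with $g$ inducing a $p$-cycle on $\{B_1,\dots,B_p\}$.

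The final step extracts a contradiction from this configuration using $b\geqslant 2$. Since $B_1$ has size at least two but meets $\Sigma$ in only one point, it contains a point $\omega'$ with $\omega'\notin\Sigma$, so $\omega'$ is fixed by $g$. As $g$ acts on $B_1,\dots,B_p$ as a $p$-cycle we have $B_1^g\neq B_1$, yet $\omega'=(\omega')^g\in B_1^g$; thus $\omega'$ lies in the two distinct blocks $B_1$ and $B_1^g$, which is absurd. Therefore no non-trivial block system exists and $G$ is primitive.

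I expect the only delicate point to be the combinatorial dichotomy of the second paragraph — identifying precisely which blocks meet $\Sigma$ and how $g$ permutes them — together with the correct use of semiprimitivity to rule out $g$ lying in the block kernel; once these are pinned down the contradiction is immediate. It is worth emphasising that no finiteness of $\Omega$ is used: only $\Sigma$ is finite, and the argument relies solely on the equal cardinality and disjointness of blocks in a transitive action, so the statement holds for $\Omega$ of arbitrary cardinality.
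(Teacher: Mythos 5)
Your proof is correct and follows essentially the same route as the paper's: both show the block kernel is semiregular, rule out the $p$-cycle lying in it, deduce that the $p$ moved points occupy $p$ distinct blocks permuted cyclically, and then derive a contradiction from a block of size at least two containing a fixed point of $g$. The only cosmetic differences are that you dispose of the degree-$p$ case up front and phrase the final contradiction as a point lying in two distinct blocks rather than as the blocks being forced to be singletons.
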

\begin{proof}
Suppose for a contradiction that   $\mathcal P$ is a non-trivial system of imprimitivity for  $G$ and let $M$ be the kernel of the action of $G$ on $\mathcal P$.  Since  $M$ is intransitive, $M$ is semiregular.  Let $g\in G$ be a $p$-cycle.  If $g\in M$, then since every element of $M$ is semiregular, we have $|\Omega|=p$, and so $G$ is primitive, a contradiction to $M$ being intransitive. Hence $g\notin M$. Let $\omega_1$, \dots, $\omega_p$ be the points of $\Omega$ moved by $g$. Suppose there is $\delta \in \mathcal P$ and $i \neq j$ such that   $\{\omega_i, \omega_j\} \subseteq \delta$. Then $\langle g\rangle $  fixes $\delta$  and hence $\{\omega_1,\dots,\omega_p\} \subset \delta$. Since $g$ fixes all other elements of $\Omega$, this gives $g\in M$, a contradiction.   Thus each $\omega_i$ lies in a distinct element of $\mathcal P$, $\delta_i$ say, and $g$ induces a $p$-cycle on $\{\delta_1,\dots,\delta_p\}$. If $|\delta_1| > 1$, then $g$ fixes some element in $\delta_1$, and so $g$ must fix $\delta_1$, a contradiction. Hence $|\delta_1|=1$ and so $\mathcal P$ is a trivial partition,  a final contradiction.
\end{proof}

\subsection{Bounds on orders of finite semiprimitive groups}

A classical result due to Bochert states that every primitive subgroup  $G$ of $\mathrm{Sym}(n)$ not containing $\mathrm{Alt}(n)$ satisfies
$$|\mathrm{Sym}(n):G| \geqslant \left \lfloor \frac{n+1}{2}\right \rfloor!.$$
This result was extended to the innately transitive setting in \cite[Theorem 6.1(3)]{bamberg}. Below we extend this result to the class of semiprimitive groups.

\begin{lem}
Let $G \leqslant \mathrm{Sym}(n)$ be a semiprimitive group such that $\mathrm{Alt}(n) \nleqslant G$. Then
$$|\mathrm{Sym}(n):G| \geqslant \left \lfloor \frac{n+1}{2}\right \rfloor!.$$
\end{lem}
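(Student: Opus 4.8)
The plan is to reduce to the innately transitive case, which is handled by \cite[Theorem 6.1(3)]{bamberg}, by passing to a well-chosen quotient action. Let $K$ be a plinth of $G$ (which exists since $G$ is finite and transitive) and choose a normal subgroup $N$ of $G$ that is maximal subject to $N<K$. As $K$ is minimally transitive, $N$ is intransitive and hence semiregular, so Lemma~\ref{lem: quotient of sp is sp} shows that $\overline{G}=G/N$ acts faithfully and semiprimitively on the set $\Delta$ of $N$-orbits, with $G_\omega\cong\overline{G}_\delta$. By Lemma~\ref{lem:plinths in quotient actions}(a) the image $\overline{K}$ is a plinth of $\overline{G}$, and maximality of $N$ forces $\overline{K}$ to be a minimal normal subgroup of $\overline{G}$, so $\overline{G}$ is innately transitive. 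Writing $d=|N|$ and $m=|\Delta|$, semiregularity of $N$ gives $n=dm$ and $|G|=d\,|\overline{G}|$.

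First I would dispose of the case $N=1$. Here $d=1$, $m=n$, and $\overline{G}=G$ is itself innately transitive with $\mathrm{Alt}(n)\nleqslant G$, so \cite[Theorem 6.1(3)]{bamberg} yields the desired bound at once.

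For the main case $N\neq 1$ I would \emph{not} invoke the innately transitive bound, since the quotient degree $m=n/d$ may be far smaller than $n$ and so an estimate for $|\overline{G}|$ at degree $m$ does not transfer cleanly to $|G|$ at degree $n$. Instead I would use only the trivial bound $|\overline{G}|\leqslant m!$, giving $|G|=d\,|\overline{G}|\leqslant d\cdot m!$, and then establish the purely combinatorial inequality
$$d\cdot m!\leqslant\frac{n!}{\lfloor(n+1)/2\rfloor!}\qquad(n=dm,\ d\geqslant 2).$$
The key observation is that, because $d\geqslant 2$, one has $n-m=(d-1)m\geqslant\lfloor(n+1)/2\rfloor$, so the $m$ largest factors $(n-m+1)(n-m+2)\cdots n$ all lie among the factors of $n!/\lfloor(n+1)/2\rfloor!$; hence
$$\frac{n!}{\lfloor(n+1)/2\rfloor!}\geqslant\prod_{i=1}^{m}(n-m+i)=\binom{dm}{m}\,m!.$$
Since $\binom{dm}{m}\geqslant dm\geqslant d$ for $d\geqslant 2$, the inequality follows and therefore $|\mathrm{Sym}(n):G|=n!/|G|\geqslant\lfloor(n+1)/2\rfloor!$.

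I expect the main obstacle to be conceptual rather than technical: one must recognise that the innately transitive bound is needed only in the degree-preserving base case $N=1$, whereas the imprimitive case is controlled entirely by the factorial gap recorded in the displayed inequality, using nothing more than $|\overline{G}|\leqslant m!$. The other point requiring care is verifying, via Lemmas~\ref{lem: quotient of sp is sp} and~\ref{lem:plinths in quotient actions}, that the maximal-normal-subgroup construction really does produce an innately transitive quotient $\overline{G}$ with a point stabiliser isomorphic to $G_\omega$, so that $|G|$ and $n$ are correctly related through $d$ and $m$.
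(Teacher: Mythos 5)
Your proof is correct, but it takes a genuinely different route from the paper's. The paper simply observes (following \cite{bamberg}) that Bochert's original argument in \cite{bochert} uses primitivity only through the fact that a primitive group containing a $3$-cycle contains $\mathrm{Alt}(n)$, and then invokes Lemma~\ref{lem:p cycles} (a semiprimitive group containing a $p$-cycle is primitive) to conclude that Bochert's proof goes through verbatim for semiprimitive groups. You instead split according to whether the plinth $K$ is a minimal normal subgroup: if so, $G$ is innately transitive and \cite[Theorem 6.1(3)]{bamberg} applies directly; if not, you take $N$ maximal normal in $G$ with $N<K$, use Lemma~\ref{lem: quotient of sp is sp} to get $|G|=|N|\cdot|\overline{G}|$ with $\overline{G}\leqslant\mathrm{Sym}(m)$ faithful on the $m=n/d$ blocks, and close with the purely combinatorial estimate $d\cdot m!\leqslant \binom{n}{m}m!\leqslant n!/\lfloor(n+1)/2\rfloor!$, which I have checked (the key points being $n-m=(d-1)m\geqslant\lceil n/2\rceil$ for $d\geqslant 2$, and $m\geqslant 2$ since $N$ is intransitive, so that $\binom{n}{m}\geqslant n\geqslant d$). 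Your approach buys a self-contained argument that does not require re-reading a nineteenth-century proof, and it isolates the pleasant fact that for imprimitive semiprimitive groups the bound follows from the semiregularity of the kernel and a factorial inequality alone, with no group theory beyond the quotient lemmas; the paper's approach is shorter, avoids any case division, and does not depend on the innately transitive bound of \cite{bamberg} as a black box. One stylistic remark: in your imprimitive case the hypothesis $\mathrm{Alt}(n)\nleqslant G$ is never used (it is automatic there since $G$ is imprimitive), which is consistent but worth flagging so a reader does not hunt for where it enters.
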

\begin{proof}
It was noted in \cite{bamberg} that the proof given in \cite{bochert} depends merely upon the fact that a primitive permutation group of  degree $n$ containing a 3-cycle must contain $\mathrm{Alt}(n)$. By  Lemma~\ref{lem:p cycles}, a semiprimitive group of degree $n$ containing a 3-cycle must be primitive, and therefore contain $\mathrm{Alt}(n)$. Hence the proof given in \cite{bochert} applies here for semiprimitive groups.
\end{proof}

\section{Open problems}

\label{sec:problems}

\subsection{Orders of finite semiprimitive groups relative to degree}

The question of bounding the order of a primitive permutation group relative to its degree goes back to Jordan. Since then many results have  led to the notion that, aside from the symmetric and alternating groups of degree $n$, all primitive groups of degree $n$ are ``small''. A result of Praeger and Saxl \cite{praegersaxl} states that every primitive permutation group of degree $n$ that does not contain $\mathrm{Alt}(n)$ has order at most $4^n$. By omitting certain types of primitive groups, asymptotically better bounds can be given \cite{babai,pyber}. Further improvements can be obtained by employing The Classification of Finite Simple Groups \cite{cameron,liebeck}, with a sharp bound due to Mar\'{o}ti \cite{Maroti}. The analogous questions for quasiprimitive and innately transitive groups have been considered \cite{bamberg,praegershalev}, and the above bound of $4^n$ for the order of an innately transitive permutation group of degree $n$ not containing $\mathrm{Alt}(n)$ also holds. Thus we pose the following:

\vspace{.25cm}
\textbf{Problem 1:} What is the largest family $\mathcal F$ of semiprimitive groups     such that, for every semiprimitive group $G \in \mathcal F$ of degree $n$, we have $|G| \leqslant 4^n$?

\vspace{.25cm}
It is possible that the family described above will need to be defined by forbidding quotient actions, rather than just subgroups.

\subsection{Density of finite semiprimitive groups}

For a subset $N \subset \mathbb N$ and $x\in \mathbb N$ we define $N(x) = | \{ n\in N \mid n \leqslant x \}|$. The \emph{density} of the subset $N$ is  defined to be $\lim_{x\rightarrow \infty} N(x) / x$. We let 
$$ \mathrm{Deg}_{\mathrm{prim}} = \{ n \in \mathbb N \mid \text{ there is } G\leqslant \mathrm{Sym}(n) \text{ such that } G \text{ is primitive and } \mathrm{Alt}(n) \nleqslant G\}.$$
A result of Cameron, Neumann and Teague \cite{CNT} shows that the set $\mathrm{Deg}_{\mathrm{prim}}$ has density zero in the natural numbers. This leads us to the vague statement that ``for most degrees, the only primitive groups are either alternating or symmetric groups''.
Similarly we define
$$ \mathrm{Deg}_{\mathrm{it}} = \{ n \in \mathbb N \mid \text{ there is } G\leqslant \mathrm{Sym}(n) \text{ such that } G \text{ is innately transitive and } \mathrm{Alt}(n) \nleqslant G\}.$$
The analogous result, that $ \mathrm{Deg}_{\mathrm{it}}$ has density zero in the natural numbers, was established in \cite{HBPS}. We are lead to consider the density of the degrees of semiprimitive groups. Since every regular group is semiprimitive, we define the following:
\begin{eqnarray*} 
 \mathrm{Deg}_{\mathrm{sp}} =  \{ n \in \mathbb N \mid&  \text{ there is } G\leqslant \mathrm{Sym}(n) \text{ such that } G \text{ is semiprimitive, } \\
  & \text{ non-regular and }  \mathrm{Alt}(n) \nleqslant G \}.
\end{eqnarray*}
For each odd integer $n$, the dihedral group of order $2n$ is semiprimitive in its action on $n$ points. Thus the density of the set $\mathrm{Deg}_{\mathrm{sp}}$ is at least $\frac{1}{2}$. We are thus lead to the following:

\vspace{.25cm}
\textbf{Problem 2a:} What is the density of the set $\mathrm{Deg}_{\mathrm{sp}}$? Is it less than 1?

\vspace{.25cm}
In fact, it may be of greater interest to answer the following question:

\vspace{.25cm}
\textbf{Problem 2b:} What is the largest family $\mathcal F$ of finite semiprimitive groups such that the density of 
$\mathrm{Deg}_{\mathcal F} = \{n \in \mathbb N \mid \text{there is } G \leqslant \mathrm{Sym}(n) \text{ such that }  G\in \mathcal F\}$ 
 is zero?

\subsection{Base sizes of semiprimitive groups}

A \emph{base} of a permutation group $G \leqslant \mathrm{Sym}(\Omega)$ is a subset $B \subset \Omega$ such that $G_{(B)}=1$. Since each element of $G$ can be described uniquely by its action on a base, finding small (relative to degree) bases   is of interest. The \emph{base size} $\mathrm b_\Omega(G)$ (or $b(G)$) of $G$,   is the size of a smallest base (clearly $b_\Omega(G) \leqslant |\Omega|$). Some permutation groups have large bases, such as the alternating and symmetric groups (in their action on $n$ points, the base sizes of $\mathrm{Alt}(n)$ and  $\mathrm{Sym}(n)$ are $n-2$ and $n-1$, respectively). At the other end of the spectrum, a regular group has base size 1. Pyber conjectured \cite{pyberbasesize} that the base size of a primitive permutation group $G$ of degree $n$ is at most $\mathrm O\left(\frac{\log(|G|)}{\log(n)}\right)$.  The conjecture is known to be true for primitive groups without a regular elementary abelian normal subgroup; for more information, we refer the reader to  recent work of  Burness and Seress \cite{timseress}.

For semiprimitive groups, we pose the following:

\vspace{.25cm}
\textbf{Problem 3:} Investigate the base sizes of semiprimitive permutation groups.

\vspace{.25cm}
It was shown in \cite{praegershalev} that there exists a constant $n_0$ such that  a quasiprimitive group of degree $n\geqslant n_0$ not containing $\alt{n}$ has base size at most $4\sqrt{n}\log(n)$ (this generalises the result of Babai for uniprimitive groups \cite{babai}). We remark that we do not know of any semiprimitive group $G$   of degree $n$ that is not innately transitive  such that $b(G) > 4\sqrt{n}\log(n)$. In the infinite case, the study of base sizes of algebraic groups was initiated in recent work of Burness, Guralnick and Saxl \cite{burnessinfinitebase}.

\subsection{Minimal degrees of semiprimitive groups}

For a permutation group $G \leqslant \sym{\Omega}$ and $g\in G$, the \emph{support} of $g$ is $\mathrm{supp}(g)=\{ \omega \in \Omega \mid \omega^g \neq \omega \}$ and the \emph{degree} of $g$ is $\mathrm{deg}(g) = |\mathrm{supp}(g)|$. The minimal degree of $G$ is then the minimum of the degrees of the non-trivial elements of $G$.

Since  $\sym{n}$ contains transpositions and  $\alt{n}$ contains 3-cycles, we have $m(\sym{n})=2$ and $m(\alt{n})=3$. For other finite primitive permutation groups, the number is usually much higher (relative to the degree). In \cite{bamberg} it was shown that, if $G\leqslant \sym{n}$ is  innately transitive  with $\alt{n} \nleqslant G$, then $m_\Omega(G) \geqslant  (\sqrt{n}-1)/ 2$. Thus:

\vspace{.25cm}
\textbf{Problem 4a:} What is the largest family $\mathcal F$ of finite semiprimitive groups such that, for each $G\in \mathcal{F}$ of degree $n$, $m(G) \geqslant (\sqrt{n}-1)/2$?
\vspace{.25cm}

If $\Omega$ is an infinite set and $G \leqslant \sym{\Omega}$ is primitive, then the so-called Jordan-Wielandt theorem shows that if $m(G)$ is finite then $G$ contains the finitary alternating group on $\Omega$, that is, the group of permutations of $\Omega$ with finite support and even degree. We do not know of any extension of the Jordan-Wielandt theorem to infinite quasiprimitive groups, and we thus pose the following:

\vspace{.25cm}
\textbf{Problem 4b:} If $\Omega$ is an infinite set and $G \leqslant \sym{\Omega}$ is semiprimitive, does $G$ contain the finitary alternating group?

\subsection{Normalisers of semiprimitive groups}

Suppose that $G$ is a primitive permutation group of degree $n$. A recent result of Guralnick, Mar\'{o}ti and Pyber states that, apart from finitely many explicitly described exceptions,  $|\mathrm N_{\mathrm{Sym}(n)}(G) : G | < n$ -- see \cite{GMP}. Concerning normalisers, we first record the following:
\begin{lem}
Suppose that $G$ is a transitive subgroup of $\mathrm{Sym}(n)$ and let $N = \mathrm N_{\mathrm{Sym}(n)}(G)$. 
\begin{enumerate}[(i)]
\item If $G$ is primitive, then $N$ is primitive.
\item If $G$ is innately transitive, then $N$ is innately transitive.
\end{enumerate}
\end{lem}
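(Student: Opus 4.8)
The plan is to exploit the inclusion $G \trianglelefteq N$ throughout. For part (i) the key observation is that any $N$-invariant partition of the underlying set is automatically $G$-invariant, since $G \leqslant N$. As $N \geqslant G$ is transitive and $G$ is primitive, $G$ admits no non-trivial invariant partition, so neither does $N$; hence $N$ is primitive. This disposes of (i) at once.

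For part (ii) I would fix a transitive minimal normal subgroup $K$ of $G$ (which exists since $G$ is innately transitive) and manufacture a transitive minimal normal subgroup of $N$ from it. The natural candidate is the normal closure $M = \langle K^x : x \in N\rangle$ of $K$ in $N$: this is normal in $N$ and transitive (it contains the transitive subgroup $K$), and every conjugate $K^x$ is again a transitive minimal normal subgroup of $G$, because $x$ induces an automorphism of $G$. The goal is then to show $M$ is in fact minimal normal in $N$. I expect the argument to split according to whether $K$ is abelian or not, and handling this dichotomy cleanly is where the real work lies.

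In the non-abelian case $K \cong T^k$ for a non-abelian simple group $T$, and distinct non-abelian minimal normal subgroups of $G$ centralise one another and intersect trivially; hence $M = K_1 \times \cdots \times K_m$, where $K_1, \dots, K_m$ are the distinct $N$-conjugates of $K$, forming a single $N$-orbit. I would then argue that $N$ permutes the $km$ simple direct factors of $M$ transitively: $N$ is transitive on the blocks $K_1, \dots, K_m$, while the stabiliser of $K_1$ contains $G$, which permutes the $k$ simple factors of $K_1 = K$ transitively by minimality of $K$ in $G$; conjugating across blocks yields full transitivity. Since a normal subgroup of $N$ lying inside a product of non-abelian simple groups that $N$ permutes transitively must be trivial or the whole product, $M$ is minimal normal in $N$, and it is transitive, so $N$ is innately transitive.

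In the abelian case $K$ is transitive and abelian, hence regular, and the centraliser in $\sym{n}$ of a regular abelian group is the group itself, so $\cent{\sym{n}}{K} = K$ and therefore $\cent{G}{K} = K$. A short argument then shows $K$ is the \emph{unique} minimal normal subgroup of $G$: any other minimal normal subgroup would meet $K$ trivially, centralise $K$, and so lie in $\cent{G}{K} = K$, a contradiction. Being the unique minimal normal subgroup, $K$ is characteristic in $G$, hence normal in $N$ (conjugation by elements of $N$ induces automorphisms of $G$), so here $M = K$. Finally, as $K$ is $G$-irreducible and every $N$-invariant subgroup of $K$ is $G$-invariant, $K$ has no proper non-trivial $N$-invariant subgroup and is thus minimal normal in $N$; being regular it is transitive, so $N$ is again innately transitive. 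The main obstacle is organising the non-abelian case so that the transitive action of $N$ on the simple factors is established correctly, and ensuring the abelian case genuinely forces $K$ to be characteristic; both reductions hinge on the elementary but crucial fact that conjugation by $N$ acts on $G$ by automorphisms.
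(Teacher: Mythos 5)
Your part (i) is exactly the paper's one-line observation: an $N$-invariant partition is $G$-invariant. Your part (ii) is correct, but it takes a genuinely different route. The paper notes that conjugation by $N$ permutes the transitive minimal normal subgroups of $G$ and then splits on whether the plinth $K$ is normalised by all of $N$: if it is, then $K$ is already a transitive minimal normal subgroup of $N$ (any $N$-invariant subgroup of $K$ is $G$-invariant); if not, then $G$ has two distinct transitive minimal normal subgroups, so Lemma~\ref{lem:criterion for prim hs hc} forces $G$ to be primitive, whence $N$ is primitive by part (i) and in particular innately transitive. You instead build the normal closure $M$ of $K$ in $N$ and prove directly that it is a transitive minimal normal subgroup of $N$, splitting on whether $K$ is abelian. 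Both arguments work: your non-abelian case correctly reduces to the standard facts that $M$ is a direct product of the distinct $N$-conjugates of $K$, that $N$ permutes the simple direct factors transitively, and that a normal subgroup of such a product is a subproduct; your abelian case correctly shows $K$ is self-centralising, hence the unique minimal normal subgroup, hence characteristic. What the paper's approach buys is brevity and a reuse of part (i) together with an already-proved structural lemma; what yours buys is independence from that lemma and a sharper conclusion, namely an explicit identification of a transitive minimal normal subgroup of $N$ (the normal closure of $K$) even in the case where $K$ is not normal in $N$, rather than merely the existence of one via primitivity.
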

\begin{proof}
Part (i) is clear: a block of imprimitivity for $N$ is a block of imprimitivity for $G$.

For part (ii), note that for any $x\in N$ and plinth $K$ of $G$, $K^x$ is also a plinth of $G$. Thus if $K \neq K^x$ for some $x\in N$, then $G$  is an innately transitive group with at least two plinths, and so is primitive of type HS or HC by Lemma~\ref{lem:prim of type hs hc}. Thus $N$ is primitive by part (i). We may thus assume that $K=K^x$ for all $x\in N$, and so $K$ is normal in $N$. Hence $K$ is a transitive minimal normal subgroup of $N$, so  $N$ is innately transitive.\end{proof}

%
%
Thus normalisers of primitive, quasiprimitive and innately transitive groups are tightly controlled in terms of their actions. We therefore pose the following:
\vspace{.25cm}

\textbf{Problem 5a:} Bound $|\mathrm N_{\mathrm{Sym}(n)}(G) : G|$ if $G$ is quasiprimitive.

\textbf{Problem 5b:} Bound $|\mathrm N_{\mathrm{Sym}(n)}(G) : G|$ if $G$ is innately transitive.

\textbf{Problem 5c:} Bound $|\mathrm N_{\mathrm{Sym}(n)}(G) : G|$ if $G$ is semiprimitive.
\vspace{.25cm}

For the third problem above,  a linear bound in $n$ is not possible, consider regular elementary abelian groups for example. In fact non-regular examples exist, for an odd prime $p$, and an integer $d$, take $(\mathrm C_p)^d \rtimes \mathrm C_2$, where an involution  acts on $(\mathrm C_p)^d$ by inversion. This group is semiprimitive of degree $p^d$, and the normaliser is $(\mathrm C_p)^d \rtimes \mathrm{GL}(d,p)$. We are thus lead to consider whether a bound of the form $n^{c\log n}$ for some constant $c$ would suffice in Problem 6c. In fact, \cite[Theorem 1.7]{GMP} shows that a bound of the form $4^{\frac{n}{\sqrt{\log n}}} n^{\log n}$ holds simply under the assumption that $G$ is transitive, so the problem is to decide if the first term may be dropped for semiprimitive groups $G$.

Note that the normaliser of a quasiprimitive group may not be quasiprimitive. Indeed, the normaliser of the action on 12 points of $\mathrm{Alt}(5)$ is an innately transitive group of type ASQ$^{\text{non-reg}}$. Finally, we remark that the normaliser of a semiprimitive group need not be semiprimitive. For example, take $G=\mathrm D_8$ in its  regular representation of degree $8$. Then $G$ is semiprimitive, but $\mathrm N_{\mathrm{Sym}(8)}(G)$ is a non-regular 2-group and so fails to be semiprimitive.

\subsection{Graph theory}

Recalling the definition of graph-restrictive permutation groups  from Section~\ref{sec:graphs}, we mention again the problem that motivated this work. The conjecture below may be the most intractable problem discussed in this section. 

\begin{conj*}[Poto\v{c}nik-Spiga-Verret \cite{psv}]
A permutation group is semiprimitive  if and only if it is graph-restrictive.
\end{conj*}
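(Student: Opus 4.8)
The plan is to prove the two implications separately. The implication that every finite graph-restrictive permutation group is semiprimitive is already established by Poto\v{c}nik, Spiga and Verret \cite{psv}, so the entire problem reduces to showing that every finite semiprimitive group $L$ is graph-restrictive. The natural first reduction is by the number of plinths: when $L$ has at least two plinths this is precisely Theorem~\ref{thm:intro-grp}, so I would reduce to the case that $L$ has a unique plinth $K$. By Lemma~\ref{lem:plinths in quotient actions} this splits into the case that $L$ is regular (equivalently $K = L$, so $L_\omega = 1$) and the case that $L$ is non-regular (so $L_\omega \neq 1$).

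I would then run the same amalgam-theoretic argument that proves Theorem~\ref{thm:intro-grp}. Given a locally $L$ pair $(\Gamma, G)$ and an edge $\{x,y\}$, the goal is to bound $G_{xy}^{[1]}$, since such a bound forces a bound on the vertex stabiliser exactly as in the proof of Theorem~\ref{thm:intro-grp}. Assuming $G_{xy}^{[1]} \neq 1$, the Thompson--Wielandt theorem of \cite{pablo} produces a prime $p$ for which $G_{xy}^{[1]}$ and $F^*(G_{xy})$ are non-trivial $p$-groups; the dichotomy of \cite{lukemichael3n2} then forces $\mathrm O_p(G_{xy}^{\Gamma(x)}) \neq 1$, and hence $\mathrm O_p(L_\omega) \neq 1$. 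When $L$ is regular we have $L_\omega = 1$, so this is an immediate contradiction and the regular case is settled verbatim. The same conclusion holds whenever the point-stabiliser satisfies $\mathrm O_p(L_\omega) = 1$ for every prime $p$, which is the exact analogue of Lemma~\ref{lem:two plinths pcore 1}.

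The main obstacle is therefore the non-regular unique-plinth case in which $\mathrm O_p(L_\omega) \neq 1$ for some prime $p$ --- precisely the \emph{thick} local situation in which the amalgam method no longer trivialises $G_{xy}^{[1]}$, and in which the conjecture already contains the (open) Weiss Conjecture for primitive groups \cite{weissc} and Praeger's Conjecture for quasiprimitive groups \cite{praegerc} as special cases. To make progress beyond these I would invoke the structure theory of Section~\ref{sec:structure}: by Theorem~\ref{thm:rad g} the quotient $L/\mathrm{rad}(L)$ is a glued product of innately transitive groups, and the aim would be to descend graph-restrictiveness from this innately transitive quotient back to $L$. Two linked difficulties must be confronted. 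First, one must establish graph-restrictiveness for the non-quasiprimitive innately transitive types (PA, ASQ, PQ, DQ), reducing each via its quasiprimitive quotient action to the primitive or quasiprimitive conjectures. Second, and more seriously, one must control the passage across the semiregular radical $\mathrm{rad}(L)$, whose composition factors are entirely unrestricted by the ``wild'' examples of Section~\ref{sec:ex}; since graph-restrictiveness is a property of the ambient group $G$ acting locally as $L$, it is not at all clear that it is inherited from $L/\mathrm{rad}(L)$, because a semiregular normal subgroup of the local action need not correspond to any quotient of $\Gamma$. Resolving this interaction between the wild radical and the local amalgam structure is the crux on which the full conjecture turns, and it is why I expect a complete proof to require genuinely new tools beyond those developed here.
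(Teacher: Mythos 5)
The statement you were asked about is not a theorem of the paper at all: it is the Poto\v{c}nik--Spiga--Verret Conjecture, which the paper states in Section~\ref{sec:problems} precisely as an \emph{open problem}, describing it as possibly ``the most intractable problem discussed in this section.'' The paper contains no proof to compare against; its only contributions toward the conjecture are the implication graph-restrictive $\Rightarrow$ semiprimitive (quoted from \cite{psv}) and Theorem~\ref{thm:intro-grp}, which handles semiprimitive groups with at least two plinths via the Thompson--Wielandt theorem of \cite{pablo} together with Lemma~\ref{lem:two plinths pcore 1} and \cite[Lemma 3.1(a)]{lukemichael3n2}. Your proposal accurately reproduces this partial progress, and your observation that the same amalgam argument disposes of any $L$ with $\mathrm O_p(L_\omega)=1$ for all primes $p$ (in particular regular $L$, where an even easier connectivity argument gives $G_{xy}=G_x^{[1]}=1$ directly) is correct and is essentially the content of \cite{lukemichael3n2}. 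One small slip: ``$L$ regular (equivalently $K=L$)'' is not an equivalence --- a non-abelian simple group acting quasiprimitively and non-regularly is its own unique plinth --- though regularity does force $K=L$, and your actual case split (regular versus non-regular) is unaffected.

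The genuine gap is the one you yourself name: the unique-plinth, non-regular case with $\mathrm O_p(L_\omega)\neq 1$ for some prime $p$ is completely open, and your sketched strategy via Theorem~\ref{thm:rad g} cannot close it with the tools in this paper. First, that case already contains the Weiss Conjecture \cite{weissc} and Praeger's Conjecture \cite{praegerc} as special cases (primitive and quasiprimitive groups can certainly have $\mathrm O_p(L_\omega)\neq 1$, e.g.\ affine groups), so any completion would resolve those long-standing conjectures. Second, the proposed descent from $L/\mathrm{rad}(L)$ back to $L$ has no mechanism: graph-restrictiveness is a property of all pairs $(\Gamma,G)$ with prescribed \emph{local} action, and a semiregular normal subgroup of $L$ induces no useful normal subgroup of $G$ or quotient of $\Gamma$; the Thompson--Wielandt machinery sees only $L_\omega$, which is untouched by passing to $L/\mathrm{rad}(L)$ (Lemma~\ref{lem: quotient of sp is sp} shows point-stabilisers are preserved, so the quotient buys nothing new locally). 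Your proposal is therefore an honest roadmap rather than a proof --- correct exactly where the paper has results, and stalled exactly where the paper itself declares the problem open.
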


\end{document}